\numberwithin{equation}{section}%
\title[When is a radial Riesz equilibrium supported on a sphere?]{Riesz energy with a radial external field:\\when is the equilibrium support a sphere?}
\author[Chafaï]{D.\ Chafaï}%
\address[Djalil Chafaï]{DMA, École normale supérieure -- PSL, 45 rue d'Ulm, %
  F-75230 Cedex 5 Paris, France}%
\email{\url{djalil@chafai.net}}%
\urladdr{\url{http://djalil.chafai.net/}}%
\author[Matzke]{R.\ W.\ Matzke}%
\address[Ryan W.\ Matzke]{Center for Constructive Approximation, %
  1326 Stevenson Center, Vanderbilt University, Nashville, TN 37240, USA}%
\email{\url{ryan.w.matzke@vanderbilt.edu}}%
\urladdr{\url{http://www.ryanmatzke.com/}}%
\author[Saff]{E.\ B.\ Saff}%
\address[Edward B.\ Saff]{Center for Constructive Approximation, %
  1326 Stevenson Center, Vanderbilt University, Nashville, TN 37240, USA}%
\email{\url{edward.b.saff@vanderbilt.edu}}%
\urladdr{\url{https://my.vanderbilt.edu/edsaff/}}%
\author[Vu]{M.\ Q.\ H.\ Vu}%
\address[Minh Quan H.\ Vu]{Center for Constructive Approximation, %
  1326 Stevenson Center, Vanderbilt University, Nashville, TN 37240, USA}%
\email{\url{minh.quan.h.vu@vanderbilt.edu}}%
\author[Womersley]{R.\ S.\ Womersley}%
\address[Robert S.\ Womersley]{School of Mathematics and Statistics, %
  UNSW Sydney, %
  Sydney NSW 2052, Australia}%
\email{\url{R.Womersley@unsw.edu.au}}%
\urladdr{\url{https://web.maths.unsw.edu.au/~rsw/}}%
\keywords{Riesz energy, radial external fields, equilibrium support on sphere, dimension reduction,
Frostman characterization}
\subjclass[2000]{%
  31B10; 
  31A10; 
  44A20; 
  33C20. 
}
\newtheorem{theorem}{Theorem}[section]%
\newtheorem{lemma}[theorem]{Lemma}%
\newtheorem{corollary}[theorem]{Corollary}%
\newtheorem{proposition}[theorem]{Proposition}%
\newtheorem{remark}[theorem]{Remark}%
\definecolor{darkgreen}{rgb}{0.0, 0.5, 0.0}
\newcommand{\RW}[1]{{\color{magenta}#1}}
\DeclareMathOperator{\supp}{supp}
\DeclareMathOperator{\Minz}{minimize}
\newcommand{\dR}{\mathbb{R}}
\newcommand{\dS}{\mathbb{S}}
\newcommand{\cP}{\mathcal{P}}
\newcommand{\rF}{\mathrm{F}}
\newcommand{\rd}{\mathrm{d}}
\newcommand{\mueq}{\mu_{\mathrm{eq}}}
\newcommand{\HG}[2]{{}_{#1}\rF_{#2}} 
\newcommand{\hsd}[1]{h_{s,d} \left( #1 \right)}
\newcommand{\hsdp}[1]{h_{s,d}' \left( #1 \right)}
\newcommand{\hsdpp}[1]{h_{s,d}'' \left( #1 \right)}
\newcommand{\hzd}[1]{h_{0,d} \left( #1 \right)}
\newif\ifAddSuffCond
\begin{document}

\begin{abstract}
We consider Riesz energy problems with radial external fields. 
We study the question of whether or not the equilibrium is the uniform distribution on a sphere.
We develop general necessary as well as general sufficient conditions on the external field that apply to
powers of the Euclidean norm as well as certain Lennard--Jones type fields. Additionally, in the former case, we completely characterize the values  of the power for which dimension reduction occurs in the sense that the support of the equilibrium measure becomes a sphere. 
We also briefly discuss the relation between these problems and certain
constrained optimization problems.
Our approach involves the Frostman characterization, the Funk--Hecke formula, and the calculus of hypergeometric functions.
\end{abstract}

\maketitle


\section{Introduction}

\subsection{Background}

We consider Riesz equilibrium problems with an external field on the whole Euclidean space $\dR^d$ and, unless otherwise stated, assume $d \geq 2$.
The Riesz $s$-kernel\RW{\footnote{Our definition of $K_s$, for $s \neq 0$, differs from the more traditional version in that it includes $s$ in its denominator. This has the advantage to produce formulas which are continuous as $s=0$.}}
$K_s:\dR^d 
\to (- \infty, \infty]$ is defined by
\begin{equation*}
  K_s(x) := \begin{cases}
    \displaystyle\frac{1}{s \|x\|^s} & \text{if $s \neq 0$} \\
    \displaystyle-\log(\|x\|) & \text{if $s = 0$}
  \end{cases},  
\end{equation*}
where $\|x\|:=\sqrt{x_1^2+\cdots+x_d^2}=\sqrt{\langle x,x\rangle}$ is the Euclidean norm. 
This kernel is continuous for $s < 0$,  singular but integrable when $0\leq s < d$, and hypersingular (non-integrable) for $s \geq d$.
Throughout, we assume $-2 < s < d$,
which ensures that the kernel is
integrable and conditionally strictly positive definite on compact sets,
see \cite{BorHS19}.

We assume throughout that our external field $V:\dR^d\to(-\infty,+\infty]$ is radial, of the form
\begin{equation} \label{eq:v}
  V(x) = v(r^2), \quad r := \|x\|,
\end{equation}
where $v:[0,+\infty)\to (-\infty,+\infty]$ is lower semi-continuous, bounded from below, and finite on some interval $(a,b)$.  

Let $\cP(\dR^d)$ be the set of probability measures on $\dR^d$.
For $s < d$, the \emph{energy} of $\mu\in \cP(\dR^d)$ is defined by
\begin{equation}\label{eq:Energy Def}
  I_{s,V}(\mu)
  :=\iint\big( K_s(x-y) + V(x) + V(y)\big)\rd\mu(x)\rd\mu(y)\in(-\infty,+\infty].
\end{equation}
When $V \equiv 0$, we simply write $I_{s}(\mu)$. We shall also denote by $\mathcal{W}_{s,V}$ the minimum of the energy over all probability measures, which is known as the \emph{Wiener constant}, 
\begin{equation*}
\mathcal{W}_{s,V} := \inf_{\mu \in \cP(\dR^d)} I_{s,V}(\mu).
\end{equation*}
When they exist, the minimizers, called \emph{equilibrium measures}, are denoted by $\mueq$. When unique, these measures must have radially symmetric support, due to the radial symmetry of $K_s$ and $V$.

The \emph{potential} of $\mu$ is the locally integrable function $U_s^\mu: \dR^d \to (-\infty, +\infty]$ defined by
\begin{equation*}
  U_s^{\mu}(x) := \int K_s(x-y)\rd\mu(y).
\end{equation*}
We will call the quantity $U_s^\mu+V$ the \emph{modified potential} of $\mu$.

We concentrate on
confining potentials where the external field ensures that the support of the equilibrium measure is compact.
The following is a list of some sufficient conditions that
ensure the equilibrium measure exists and has compact support:

\begin{enumerate}[label=(\alph*)]
\item
$0 < s < d$ and
  either
   $\displaystyle v(\infty) := \lim_{r\to \infty} v(r^2) = +\infty$,\\
 or $v(\infty)
 < +\infty$ and
$\displaystyle
    \lim_{r \rightarrow \infty} sr^s \big( v(r^2) - v(\infty) \big)< -1, 
$

  \item 
  $s = 0$ and
  $\displaystyle\lim_{r\to\infty}\left( v(r^2)-\log r \right)=+\infty$,
\item
$-2 < s < 0$ and 
  $\displaystyle\limsup_{r\to\infty} sr^s v(r^2)<-2^{-s}$.
\end{enumerate}

\noindent
Moreover, in each of these cases $\mathcal{W}_{s,V} < \infty$ and the equilibrium measure $\mueq$ is unique.
See, for example, \cite[Theorems 2.1 and 2.4]{DraOSW23}, \cite[Theorem I.1.3]{SafT97}, and \cite[Corollary 4.4.16]{BorHS19}.
Furthermore, $\mu = \mueq$ is characterized by the
variational \emph{Frostman conditions}:
for some finite constant $C$,
        \begin{align}\label{cond: Frostman}
            \begin{split}
                U_s^{\mu} + V \geq C, & \quad\text{q.e. on } \mathbb{R}^d,\\
                U_s^{\mu} + V \leq C, & \quad\text{on } \supp \mu ,
            \end{split}
        \end{align}
        where q.e.\ denotes a property holding except on sets of $s$-capacity zero
        (see \cite{Lan72,BorHS19}).

\subsection{Main results}

In the statement of our results we use the following notation:  $\rd x$ denotes Lebesgue measure in $\dR^d$ and $\sigma_R$ is the uniform
probability measure on the sphere $\dS^{d-1}_R: = \{x\in\dR^d:\|x\|=R\}$ in $\mathbb{R}^d$.
For $R = 1$, we further put $\sigma := \sigma_1$ and $\dS^{d-1}:=\dS^{d-1}_1$. 
For an interval $I \subseteq \mathbb{R}$, a function $f: I \rightarrow \mathbb{R} \cup \{ \pm \infty\}$ is $\mathcal{C}^k$ \emph{in the extended sense on $I$} when for each $\ell \in \{0, ..., k\}$ and $y \in I$,  $f^{(\ell)}(y)$ exists as an element of $\mathbb{R} \cup \{ \pm\infty\}$ and is finite except at a finite set of values in $I$, and $\lim\limits_{x \rightarrow y} f^{(\ell)}(x) = f^{(\ell)}(y)$, with the limit being one-sided for the endpoints of $I$. When we do not specify the interval, it is assumed to be $[0, \infty)$.


Our first result below gives, as a special case, conditions when the support of equilibrium measure $\mueq$ 
for the energy \eqref{eq:Energy Def} cannot be a sphere.

\begin{theorem}[Structure of $\mueq$ for $s\geq d-3$]
\label{thm:Sphere not a minimizer}
 Suppose that $ d-3 \leq s < d$ and $v$ is bounded from below, $\mathcal{C}^2$ in the extended sense, with $v''$ finite on $(0, \infty)$, and such that $\mathrm{I}_{s,V}$ has a unique equilibrium measure $\mueq$. If $d = 2$ and $-1 \leq s < 0$, assume also that $\lim\limits_{\rho \rightarrow 0^+} \rho^{\frac{s}{2}+1} v'(\rho) =0$. 
With $x = r \theta$, $r \in [0, \infty)$, and $\theta \in \mathbb{S}^{d-1}$, let $\nu \in \mathcal{P}\big([0,\infty)\big)$ be such that
\begin{equation*}
\rd\mueq(x) = \rd\sigma(\theta)\rd\nu(r).
\end{equation*}
 Then $\supp(\nu)$ is a perfect set, i.e.\ closed with no isolated points.
\end{theorem}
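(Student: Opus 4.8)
The plan is to argue by contradiction: suppose $\supp(\nu)$ has an isolated point $r_0 \in [0,\infty)$. Since $\mueq$ is radial and has support $\dS^{d-1}_{r_0}$ locally near radius $r_0$, this means a positive portion of the mass of $\mueq$ sits exactly on the sphere of radius $r_0$, isolated from the rest of $\supp(\mueq)$. I would first reduce to a one-dimensional picture. Writing $x = r\theta$ and using the Funk--Hecke formula, the modified potential $U_s^{\mueq}(r\theta) + V(r\theta)$ depends only on $r$; call it $\Psi(r) := h_{s,d}$-type integral plus $v(r^2)$, where the kernel interaction between two spheres $\dS^{d-1}_{r}$ and $\dS^{d-1}_{\rho}$ reduces, via averaging over $\theta$, to a function $g_s(r,\rho)$ expressible through a hypergeometric integral (this is the ``dimension reduction'' mechanism alluded to in the abstract). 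So $\Psi(r) = \int g_s(r,\rho)\,\rd\nu(\rho) + v(r^2)$.

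Next I would apply the Frostman conditions \eqref{cond: Frostman}. On $\supp(\mueq)$ — in particular at $r = r_0$ and on the isolated sphere $\dS^{d-1}_{r_0}$ — we have $\Psi \le C$, while $\Psi \ge C$ q.e. on all of $\dR^d$, hence on a full neighborhood of radius $r_0$ (capacity-zero sets cannot contain an interval of radii times the full sphere when $s$ is in the stated range). Therefore $\Psi$ attains a local minimum at $r = r_0$ with value exactly $C$. The crux is then to show that $\Psi$ cannot have an interior local minimum at an isolated support point: I would compute $\Psi''(r_0)$ (or the appropriate one-sided second derivative / Laplacian in the radial variable) and show it is strictly negative, contradicting the minimality. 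The key is that the self-interaction term coming from the mass on $\dS^{d-1}_{r_0}$ itself contributes a strongly concave term: differentiating $g_s(r,r_0)$ twice in $r$ at $r = r_0$ produces a divergent-but-signed contribution, or after regularization a term of definite sign, because the Riesz kernel restricted to a sphere is superharmonic in the ambient space for $s \ge d-2$ and the hypergeometric identities push this down to $s \ge d-3$ in the radial reduction. The hypotheses $v \in \mathcal{C}^2$ in the extended sense with $v''$ finite, and the boundary condition $\lim_{\rho\to 0^+}\rho^{s/2+1}v'(\rho) = 0$ in the borderline case $d=2$, $-1 \le s < 0$, are exactly what is needed to ensure the external-field term $v(r^2)$ does not overwhelm this concavity and that the computation is valid at $r_0 = 0$.

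The main obstacle, as I see it, is making the second-derivative computation rigorous: the potential of the isolated spherical component is only Hölder/barely-$\mathcal{C}^1$ across its own support, so $\Psi''(r_0)$ must be interpreted carefully (e.g. via a one-sided limit from outside the local support gap, or by testing against the distributional Laplacian), and one must verify that the sign condition genuinely holds precisely on the range $s \ge d-3$ rather than the naive $s \ge d-2$. I expect this is where the calculus of hypergeometric functions — monotonicity and sign properties of $h_{s,d}$ and its derivatives, together with contiguous-function relations for ${}_2F_1$ — does the real work. A secondary technical point is handling the endpoint $r_0 = 0$ separately (the sphere degenerates to a point), which is why the statement isolates the case $d=2$, $-1 \le s < 0$ with its extra hypothesis; there one checks directly that the potential generated by $\nu$ has the right behavior at the origin so that a local minimum of $\Psi$ at $0$ is again impossible.
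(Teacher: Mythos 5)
Your overall architecture (radial reduction via Funk--Hecke, Frostman forcing a local minimum of the radial modified potential at the isolated radius, and a contradiction coming from the singular self-interaction of the isolated sphere) is the same as the paper's. The genuine gap is that you try to run a \emph{single} second-derivative (concavity) mechanism over the whole range $d-3\le s<d$, and the sign claim on which it rests is false outside $d-3\le s<d-2$. Writing the self-interaction term as $m_R R^{-s}h_{s,d}(\lambda)$ with $\lambda=\|x\|^2/R^2$, one has from \eqref{eq:Derivatives of f_s,d,R,v in sphere} and \eqref{eq:HG1neg} that $h_{s,d}''(\lambda)\to-\infty$ as $\lambda\to1^-$ only when $d-3\le s<d-2$; for $d-2<s<d-1$ the prefactor $\frac{(2+s-d)(4+s-d)(s+2)}{4d(d+2)}$ is \emph{positive}, so $h_{s,d}''(\lambda)\to+\infty$ as $\lambda\to1^-$ and there is no concavity to exploit --- the contradiction in that regime must instead come from the \emph{first} derivative, which blows up with opposite signs ($h_{s,d}'\to+\infty$ from inside, $\to-\infty$ from outside, with a finite jump at $s=d-2$), forcing $v'$ to be discontinuous at $R^2$ and contradicting $\mathcal{C}^1$-regularity. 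And for $d-1\le s<d$ the potential of the isolated sphere is $+\infty$ on the sphere itself ($\HG21$ at $z=1$ diverges since $c-a-b=\frac{d-1-s}{2}\le0$), so no derivative computation at $r_0$ is even available; the paper disposes of this case by the capacity/Hausdorff-measure argument (its Theorem~\ref{thm:Dim of support for Capacity}): $I_s(\sigma_R)=\infty$ contradicts finiteness of the energy. Your appeal to ``superharmonicity for $s\ge d-2$ pushed down to $s\ge d-3$'' does not rescue this: the radial second derivative of the sphere potential genuinely changes sign behavior at $s=d-2$, so the case split is not an artifact of the proof but is forced by the hypergeometric asymptotics.

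Concretely, to repair the proposal you would need three separate arguments matching the paper's Lemma~\ref{lem:perfect set as a minimizer}: (a) $d-1\le s<d$, infinite energy of an isolated spherical atom; (b) $d-2\le s<d-1$, first-derivative blow-up/jump of the modified potential at the isolated sphere versus continuity of $v'$; (c) $d-3\le s<d-2$ together with the $d=2$, $-1\le s<0$ case, where your second-derivative mechanism is correct: $\lim_{\lambda\to1^-}p''(\lambda)=-\infty$ unless $v''(R^2\lambda)\to+\infty$, contradicting finiteness and continuity of $v''$; here one also checks, as you anticipate, that the contributions from the support inside $r_1$ and outside $r_2$ (and from a possible atom at $0$ when $s<0$) stay bounded near $\lambda=1$, and the atom at the origin is excluded either by infinite energy ($s\ge0$) or by the strict decrease of the modified potential near $0$ under the extra hypothesis $\lim_{\rho\to0^+}\rho^{\frac{s}{2}+1}v'(\rho)=0$.
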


In particular $\mathcal{H}_{d-1}(\supp(\mueq)) = \infty$, where $\mathcal{H}_{d-1}$ is the $(d-1)$-dimensional Hausdorff measure. The additional assumption for $d =2$, $-1 \leq s < 0$ is necessary to rule out the possibility of an isolated point at $0$, as when $ s < 0$, it is possible for $\delta_0$ to be the equilibrium measure, as shown in \cite[Remark 1.1]{ChaSW23}. We also note that the conditions on $v$ can be weakened if we assume $s \geq d-2$, as we show in Lemma \ref{lem:perfect set as a minimizer}.

When $-2<s<d-3$, a dimension reduction phenomenon may occur. In particular, the following result addresses this possibility when the external field is a power of the Euclidean norm. For this case, our results completely answer the question of when $\mueq$ is supported on a sphere, as encountered in \cite[Theorem 17]{HerGBS24} and \cite[Theorem 1.2]{ChaSW23}. For the statement of this result, we introduce the following constants, involving the classical hypergeometric functions $\HG21$ and $\HG32$ (see Appendix~\ref{se:appendix HG}) : 
\begin{equation}\label{eq:c_s,d definition}
    c_{s,d} := 
        \HG21\Bigr(\frac{s}{2}, \frac{2+s-d}{2}; \frac{d}{2}; 1\Bigr)%
        =\frac{\Gamma(\frac{d}{2})\Gamma(d -s-1)}
        {\Gamma(\frac{d-s}{2})\Gamma(d-\frac{s}{2}-1)},
        \quad\text{for $-2 < s < d-1$}.
\end{equation}
Note that $c_{s,d} > 0$, $c_{s,d}$ is a strictly convex function of $s$ on $-2 < s < d-1$,
and $c_{0,d} = 1$, $c_{d-2,d} = 1$ so $c_{s,d} < 1$ for $0 < s< d-2$.
For $s=0$, we also define, with $\psi_0$ denoting the digamma function, 
\begin{equation}\label{eq:b_d definition}
    b_{d} := 
       - \log(2) + \frac{1}{4}\; \HG32\Bigr(1,1, \frac{d+1}{2}; 2 , d; 1\Bigr) %
       = - \log(2) + \frac{1}{2}\psi_0(d-1) - \frac{1}{2}\psi_0\Bigr(\frac{d-1}{2}\Bigr).
\end{equation}

\begin{theorem}[Power-law external fields for which $\supp(\mueq)$ is a sphere]\label{thm:Sphere Min}
Suppose that $-2 < s < d-3$ and $V(x) = \frac{\gamma}{\alpha}\| x \|^{\alpha}$, where $\gamma>0$ and $\alpha > \max\{-s, 0\}$. Define
\begin{equation}\label{eq:Alpha Bound for Sphere}
\alpha_{s,d}:= \begin{cases} \max\left\{ \dfrac{sc_{s,d}}{2-2c_{s,d}}, \  2- \dfrac{(s+2)(d-s-4)}{2(d-s-3)}  \right\} & s \neq 0 \\[1.1em]
\max\left\{-\dfrac{1}{2b_d}, \   2- \dfrac{(d-4)}{(d-3)}  \right\} & s = 0
\end{cases}.
\end{equation}
If $\alpha \geq \alpha_{s,d}$,  then 
$\mueq = \sigma_{R_*}$, where
\begin{equation}\label{eq:radius for PL}
R_* = \left( \frac{c_{s,d}}{2 \gamma}  \right)^{\frac{1}{\alpha + s}}
= \left( \frac{\Gamma(\frac{d}{2}) \Gamma(d-s-1)}{ 2 \gamma \Gamma( \frac{d-s}{2}) \Gamma(d - \frac{s}{2}-1)}
\right)^{\frac{1}{\alpha + s}} .
\end{equation}
The energy is then
\begin{equation}\label{eq:energy for PL}
I_{s, V}(\sigma_{R_*}) =  \begin{cases}
 \dfrac{ (\alpha + s) (2\gamma)^{\frac{s}{\alpha+s}}}{\alpha s}  \left( \dfrac{\Gamma(\frac{d}{2}) \Gamma(d-s-1)}{ \Gamma( \frac{d-s}{2}) \Gamma(d - \frac{s}{2}-1)} \right)^{\frac{\alpha}{\alpha+s}} & s \neq 0 \\[2ex]
\dfrac{1 + \log(2\gamma)}{\alpha} - \log(2) + \frac{1}{2} \big( \psi_0 (d-1) - \psi_0( \frac{d-1}{2}) \big) & s = 0
\end{cases}.
\end{equation}
Furthermore, the threshold $\alpha_{s,d}$ is a sharp bound for $\alpha$, meaning that if $\max\{ -s,0 \}<\alpha< \alpha_{s,d}$, then for all $R>0$, $\sigma_{R}$ is not a minimizer of $I_{s,V}$.
\end{theorem}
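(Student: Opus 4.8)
The plan is to use the Frostman characterization \eqref{cond: Frostman} to reduce the problem to a one–variable analysis of the modified potential along a radial ray. First, I would compute the potential $U_s^{\sigma_R}(x)$ of the uniform measure on $\dS^{d-1}_R$. By the Funk--Hecke formula, for $x=r\theta$ this depends only on $r$ and $R$, and evaluating the spherical integral of $K_s$ produces a hypergeometric expression; this is exactly where the constant $c_{s,d}$ in \eqref{eq:c_s,d definition} enters, namely $U_s^{\sigma_R}(x)$ restricted to $\dS^{d-1}_R$ equals a constant times $R^{-s}$ (times $c_{s,d}$ when $s\neq 0$, and with the $b_d$ correction when $s=0$). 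For $r\neq R$ one gets a function $g_R(r)$ of a single variable, expressible via $\HG21$. Adding $V(x)=\tfrac{\gamma}{\alpha}r^\alpha$, the candidate radius $R_*$ is forced by the stationarity condition: on $\dS^{d-1}_{R_*}$ the modified potential must be constant, and its radial derivative must vanish there (this is the first-order condition), which yields precisely \eqref{eq:radius for PL}, and plugging back in gives the energy value \eqref{eq:energy for PL}.

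Next, for the \emph{sufficiency} direction ($\alpha\ge\alpha_{s,d}\Rightarrow\mueq=\sigma_{R_*}$), I would verify the two Frostman inequalities for $\mu=\sigma_{R_*}$. Since the modified potential $W(r):=g_{R_*}(r)+\tfrac{\gamma}{\alpha}r^\alpha$ is radial and smooth away from $r=R_*$, it suffices to show (i) $W(R_*)=C$ with $W'(R_*)=0$, and (ii) $W(r)\ge C$ for all $r\ge 0$, i.e.\ $r=R_*$ is a global minimum of $W$ on $[0,\infty)$. Because $g_{R_*}$ is convex on each of $[0,R_*]$ and $[R_*,\infty)$ (this should follow from the calculus of the hypergeometric function, analogous to the $h_{s,d}$ analysis elsewhere in the paper) and $r\mapsto r^\alpha$ is convex for $\alpha\ge 1$, the sign of $W'$ near $R_*$ together with the behavior at $0$ and $\infty$ pins down the global minimum. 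The threshold $\alpha_{s,d}$ arises from two competing obstructions: one term, $\tfrac{sc_{s,d}}{2-2c_{s,d}}$ (resp.\ $-\tfrac{1}{2b_d}$), comes from requiring $W(r)\ge C$ as $r\to 0^+$ — i.e.\ the exterior potential's curvature at the origin must be dominated by $\gamma r^\alpha$ — while the other term, $2-\tfrac{(s+2)(d-s-4)}{2(d-s-3)}$, comes from requiring that $W$ not dip below $C$ just \emph{outside} $R_*$, which is a second-order (convexity at $R_*$) condition on $g_{R_*}''(R_*)$ versus $(\alpha-1)R_*^{\alpha-2}$.

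For the \emph{sharpness} direction ($\max\{-s,0\}<\alpha<\alpha_{s,d}\Rightarrow\sigma_R$ is never a minimizer), I would argue that for each $R>0$ the first Frostman inequality fails for $\sigma_R$: either $W'(R^+)\neq W'(R^-)$ or, even at the stationary radius $R=R_*$, one of the two inequalities defining $\alpha_{s,d}$ is violated, so $W$ takes a value strictly below $W(R_*)$ somewhere on $[0,\infty)$ (near $0$ if $\alpha<\tfrac{sc_{s,d}}{2-2c_{s,d}}$, or just outside $R_*$ if $\alpha<2-\tfrac{(s+2)(d-s-4)}{2(d-s-3)}$). Since $\sigma_R$ at a non-stationary radius obviously cannot satisfy the equality-on-support condition with the correct gradient, only $R=R_*$ is a candidate, and there the failure of the Frostman inequalities rules it out; uniqueness of $\mueq$ (guaranteed under the stated hypotheses, case (a)/(b)/(c)) then finishes the argument.

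The main obstacle I anticipate is the hypergeometric bookkeeping: writing $g_R(r)$ and its first two derivatives in closed form, establishing the piecewise convexity of $g_R$, and extracting the \emph{exact} constants $\tfrac{sc_{s,d}}{2-2c_{s,d}}$ and $2-\tfrac{(s+2)(d-s-4)}{2(d-s-3)}$ (and their $s=0$ analogues) from the near-origin and second-order conditions. Matching these precise thresholds — rather than merely some sufficient bound — will require careful use of contiguous-function relations and Gauss's summation theorem for $\HG21$ at argument $1$, and is the step where the restriction $s<d-3$ (ensuring $d-s-3>0$ so the second bound is finite and the relevant hypergeometric series converge) is essential.
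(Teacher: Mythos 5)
Your overall framework (Funk--Hecke computation of $U_s^{\sigma_R}$, Frostman characterization, stationarity pinning down $R_*$ via condition \ref{item:NecCond SP}, and the sharpness direction read off from the necessary conditions of Theorem~\ref{thm:Necessary Condition Riesz}, where the two terms in $\alpha_{s,d}$ come from the boundary condition at the origin and the second-order condition at $R_*$) matches the paper, and the necessity/sharpness half of your argument is essentially the paper's. The gap is in the sufficiency half: you reduce the global-minimality of the modified potential $W$ at $R_*$ to ``$g_{R_*}$ is convex on each of $[0,R_*]$ and $[R_*,\infty)$ and $r\mapsto r^\alpha$ is convex,'' but neither convexity statement holds in the regimes that carry the actual content of the theorem. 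First, the external field: the interesting range is $\alpha_{s,d}\le\alpha<2$ (which is nonempty whenever $s<d-4$), and there $v(\rho)=\frac{\gamma}{\alpha}\rho^{\alpha/2}$ is concave in $\rho$, so the sum $f=h_{s,d}+v$ of the sphere potential and the field is not convex and ``local minimum at $R_*$ plus correct behavior at $0$ and $\infty$'' does not pin down a global minimum -- $f$ could dip below $f(1)$ strictly between the endpoints. Second, the sphere potential itself: for $d-4<s<d-3$ one has $h_{s,d}''(\lambda)\to-\infty$ as $\lambda\to1^-$, and just outside the sphere $h_{s,d}''$ is negative near $\lambda=1$ (the relevant $\HG21$ value at $1$ has $\Gamma\bigl(\frac{d-s-4}{2}\bigr)<0$ in its denominator), so piecewise convexity of $g_{R_*}$ is simply false there.

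This is precisely why the paper does not argue by convexity except in the easy case $-2<s\le d-4$, $\alpha\ge 2$ (its Case~1, via Corollaries~\ref{cor:Convex_external_fields_-2_s_d-4_inside_sphere} and \ref{cor:Convex_external_fields_-2_s_d-4_outside_sphere}). For $\alpha_{s,d}\le\alpha<2$ with $s<d-4$, and for $d-4<s<d-3$, it instead controls $k=\lceil\frac{d-s}{2}\rceil$ derivatives of $f$ on $[0,1]$ and of the transformed function $g(\kappa)=2R^s\kappa^{-\frac{s}{2}-1}f'(\kappa^{-1})$ on $[0,1]$, proves a sign pattern $(-1)^\ell f^{(\ell)}(1)>0$ for $\ell<k_0$ and $\le 0$ for $k_0\le\ell<k$ (using monotonicity in $j$ of the ratio $\frac{(d-s-2-2j)(s+2j)}{2(d-s-2-j)(2j-\alpha)}$ together with Gauss summation and the contiguity identities \eqref{eq:Equating Higher Geom Series}--\eqref{eq:Equating Higher Geom Series, ver 2}), and then invokes the half-monotone-to-unimodal principle (Proposition~\ref{prop:half-monotone implies unimodal}, and Lemmas~\ref{lem:General Inside the Sphere}, \ref{lem:General Outside the Sphere}) to conclude that the only possible minima are the endpoints, which the boundary conditions \ref{item:NecCond BC0} and \ref{item:NecCond BCinf} rule out. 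So the ``hypergeometric bookkeeping'' you flag is not just bookkeeping: without a replacement for convexity -- some unimodality or higher-order monotonicity mechanism -- your sufficiency argument cannot be completed, and matching the exact thresholds $\frac{sc_{s,d}}{2-2c_{s,d}}$ and $2-\frac{(s+2)(d-s-4)}{2(d-s-3)}$ as \emph{sufficient} (not merely necessary) is exactly the step that this mechanism delivers. A minor additional point: for $s<d-3$ the modified potential is $C^2$ across the sphere, so your suggested dichotomy ``$W'(R^+)\neq W'(R^-)$'' in the sharpness argument never occurs; the correct statement is simply that stationarity forces $R=R_*$ and then \ref{item:NecCond DD} or \ref{item:NecCond BC0} fails.
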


The hypotheses require that $\alpha + s > 0$. When $s < 0$, $\alpha + s = 0$, and $\gamma > 1$, the equilibrium measure is a point mass at the origin,
otherwise the equilibrium measure does not exist, see \cite[Remark 1.1]{ChaSW23}.
The lower bounds \eqref{eq:Alpha Bound for Sphere} on the parameter $\alpha$ are illustrated in the four graphics of Figure~\ref{fig:Ralpha}.
The active bound in \eqref{eq:Alpha Bound for Sphere} changes at $s = d - 4$, for which $\alpha_{s,d} = 2$, and as $d$ increases the bound on $\alpha$ goes to $0$ for $-2 < s < d - 4$.
We remark that the properties of $c_{s,d}$ ensure that $\alpha_{s,d} > \max\{-s,0\}$ and
$\alpha_{s,d}$ is continuous at $s=0$, 
see the discussion at the beginning of section \ref{subsec:Proof for power law external field}.

\begin{figure}[htbp]
  \centering
  \includegraphics[width=\textwidth]{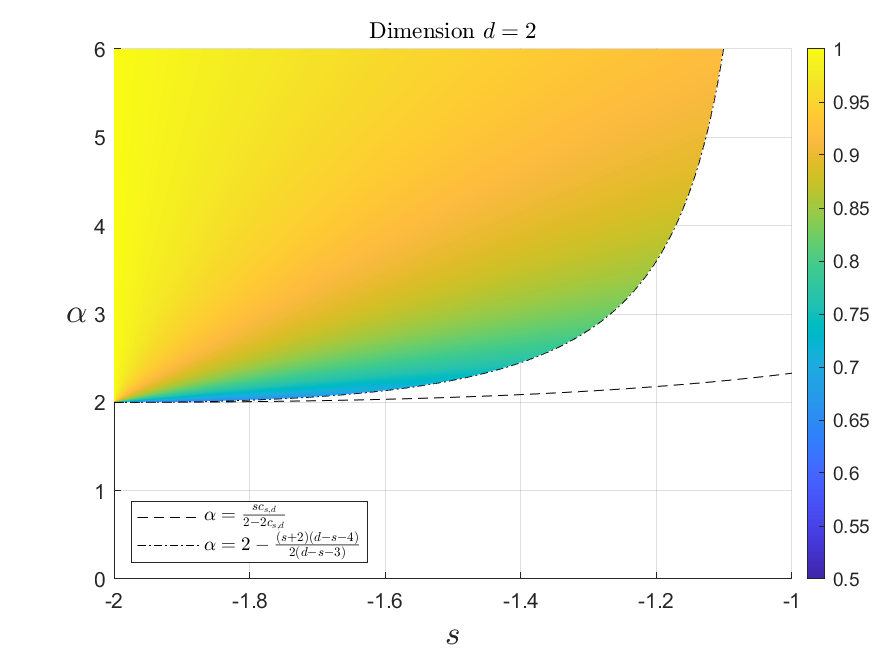}
  \includegraphics[width=\textwidth]{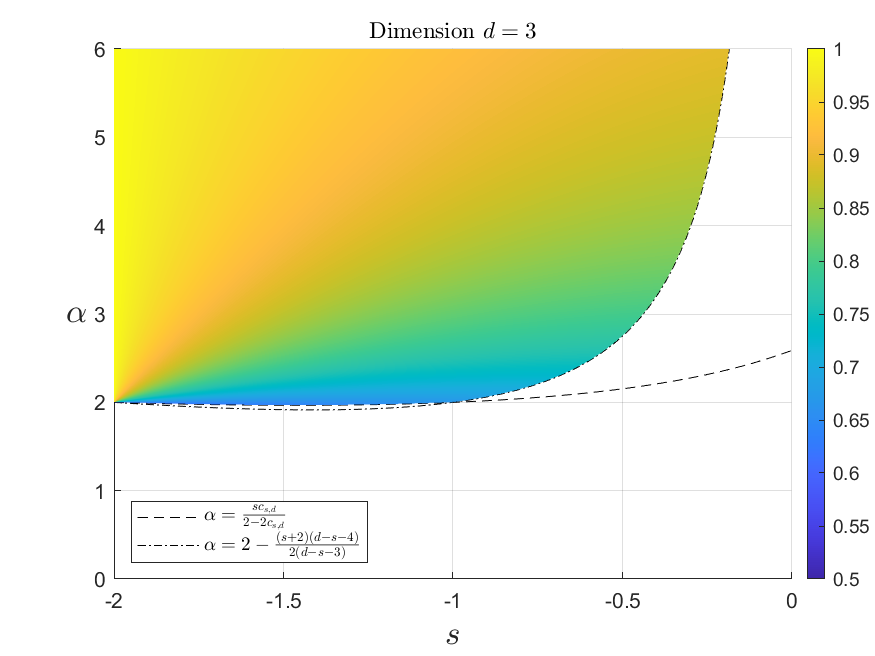}
  \caption{\label{fig:Ralpha} Graphics for four values of $d$ (see also next page). The color gives the value of $R_*$
  when the equilibrium support is $\mathbb{S}_{R_*}^{d-1}$,
  for $d = 2, 3, 4, 10$.
  The plots are for Riesz parameter $-2 < s < d-3$ and the external field
  power $\alpha\geq\alpha_{s,d}$,
  as in Theorem~\ref{thm:Sphere Min} with $\gamma = 1$.
  The two curves are the terms in the
  maximum in \eqref{eq:Alpha Bound for Sphere}.
  Outside the colored region the support is not a sphere.}
\end{figure}

\begin{figure}[htbp]\ContinuedFloat
  \centering
  \includegraphics[width=\textwidth]{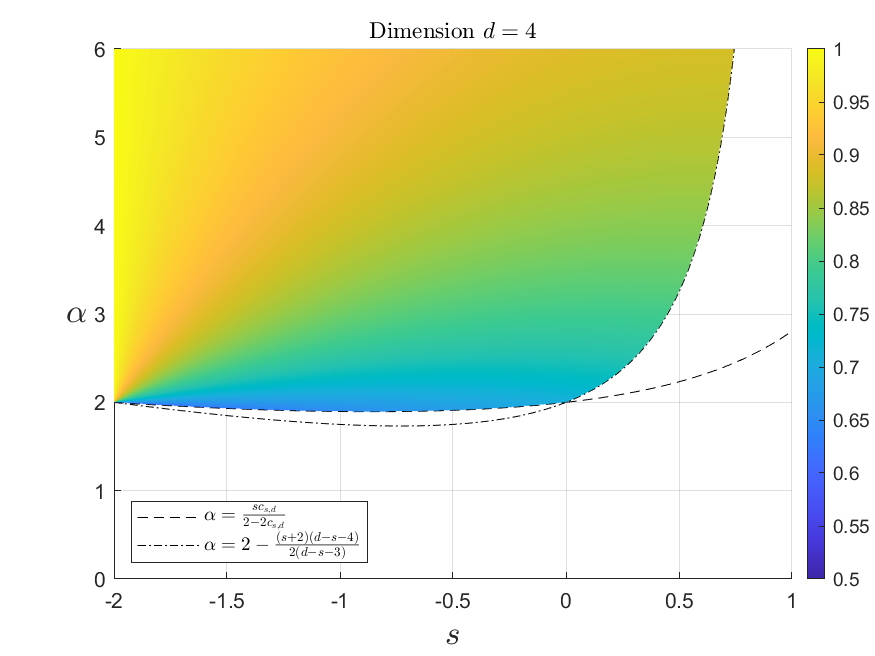}
  \includegraphics[width=\textwidth]{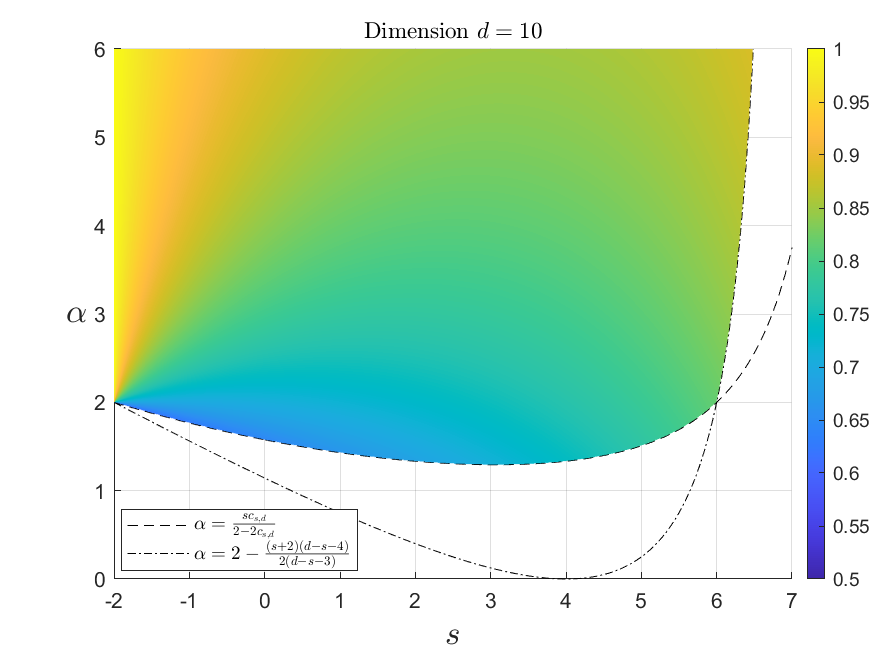}
\end{figure}

We next provide necessary conditions for a general twice continuously differentiable radial external field to yield $\sigma_{R_*}$ as the equilibrium measure.
\begin{theorem}[Necessary conditions]\label{thm:Necessary Condition Riesz}
      Suppose that $-2 < s < d-3$ and $V(x)=v(\|x\|^2)$, where $v(\cdot)$ is $\mathcal{C}^2$ in the extended sense. If $\mueq=\sigma_{R_*}$ for some $R_* > 0$, then the following must hold for $R = R_*$:
      \begin{enumerate}[label=\textnormal{(\roman*)}] 
        \item\label{item:NecCond SP} $\displaystyle R^{s+2}v'(R^2) = \frac{c_{s,d}}{4}$\vspace{.5em} 
        \item\label{item:NecCond DD} $\displaystyle R^2\frac{v''(R^2)}{v'(R^2)}\geq -\frac{(s+2)(d-s-4)}{4(d-s-3)}$\vspace{.5em} 
        \item\label{item:NecCond BC0} 
        if $s \neq 0$, then
        \[
           \lim_{r\to 0^+} R^s (v(r^2) - v(R^2)) \geq \frac{c_{s,d}-1}{s}
        \]
        if $s = 0$, then
        \[
           \lim_{r\to 0^+} v(r^2) - v(R^2) \geq b_d.
        \]
        \item\label{item:NecCond BCinf} if $s \neq 0$, then
        \[
          v(R^2) + \frac{R^{-s}}{s} \; c_{s,d} \leq
          \lim_{r\to\infty} \left[\frac{(R+r)^{-s}}{s} + v(r^2) \right]
        \]
        if $s = 0$, then
        \[
         - \log(R) + b_d + v(R^2)  \leq
         \lim_{r\to\infty} \left[- \log(R + r) + v(r^2) \right].
        \]
      \end{enumerate}
\end{theorem}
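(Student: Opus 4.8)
The plan is to derive all four conditions from the Frostman characterization \eqref{cond: Frostman} applied to the hypothesis $\mueq=\sigma_{R_*}$, after passing to the radial variable. Write $x=r\theta$ with $r\ge0$, $\theta\in\dS^{d-1}$, set $R:=R_*$, and put $g(r):=U_s^{\sigma_R}(r\theta)$, $\phi(r):=g(r)+v(r^2)$; by radial symmetry of $K_s$ and $V$ the modified potential $U_s^{\sigma_R}+V$ equals $\phi(r)$. Since $\supp\sigma_R=\dS^{d-1}_R$, \eqref{cond: Frostman} says precisely that $\phi(r)\ge\phi(R)=:C$ for q.e.\ $r\ge0$, i.e.\ that $r=R$ is a global minimizer of $\phi$. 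The essential input is a closed form for $g$: applying the Funk--Hecke formula rewrites $g(r)$ as a one-variable integral whose Euler integral form, together with the inversion symmetry of $\sigma_R$, identifies it for $s\ne0$ as
\[
g(r)=\frac{\max(r,R)^{-s}}{s}\;\HG21\!\Bigl(\tfrac s2,\tfrac{2+s-d}{2};\tfrac d2;\tfrac{\min(r,R)^2}{\max(r,R)^2}\Bigr),
\]
with the logarithmic analogue for $s=0$ (cf.\ Appendix~\ref{se:appendix HG}). Two features of $g$ will be used: the boundary values $g(0)=R^{-s}/s$, $g(R)=R^{-s}c_{s,d}/s$ (resp.\ $g(0)=-\log R$, $g(R)=-\log R+b_d$, with $b_d$ the constant produced by the $s\to0$ limit, expressible via $\HG32$ as in \eqref{eq:b_d definition}), and the large-$r$ asymptotics of $g$ matching $(R+r)^{-s}/s$ (resp.\ $-\log(R+r)$).

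For \ref{item:NecCond SP}--\ref{item:NecCond DD} I differentiate the closed form at $r=R$. Using Gauss's summation $\HG21(a,b;c;1)=\Gamma(c)\Gamma(c-a-b)/\bigl(\Gamma(c-a)\Gamma(c-b)\bigr)$ together with the derivative and contiguous relations for $\HG21$, one checks that the inner ($r<R$) and outer ($r>R$) branches agree through second order at $r=R$, so $g\in\mathcal{C}^2$ in a neighbourhood of $R$, with $g'(R)=-\tfrac12 c_{s,d}\,R^{-s-1}$ and $g''(R)=\tfrac{c_{s,d}(sd-s^2-4s-2)}{4(d-s-3)}\,R^{-s-2}$; here the hypothesis $s<d-3$ is exactly what makes the twice-shifted Gauss value $\HG21(\tfrac s2+2,\tfrac{2+s-d}{2}+2;\tfrac d2+2;1)$ converge, so that $g''(R)$ is finite. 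Since $v$ is $\mathcal{C}^2$ in the extended sense near $R$ and $v(R^2)<\infty$ (otherwise $\sigma_R$ would have infinite energy), $\phi$ is $\mathcal{C}^2$ near $R$ with a minimum there; $\phi'(R)=0$ reads $g'(R)+2Rv'(R^2)=0$, which is \ref{item:NecCond SP} and also forces $v'(R^2)=c_{s,d}/(4R^{s+2})>0$. Dividing the second-order condition $\phi''(R)=g''(R)+2v'(R^2)+4R^2v''(R^2)\ge0$ by $4R^2v'(R^2)>0$ and simplifying with \ref{item:NecCond SP} yields \ref{item:NecCond DD}. The $s=0$ statements follow from the same computation with the logarithmic kernel (or from the limit $s\to0$, using $c_{0,d}=1$).

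For \ref{item:NecCond BC0}--\ref{item:NecCond BCinf} I evaluate the Frostman inequality $\phi(r)\ge C$ in the limits $r\to0^+$ and $r\to\infty$. Because $v$ is $\mathcal{C}^2$ in the extended sense, $\lim_{r\to0^+}v(r^2)$ exists in $\dR\cup\{+\infty\}$; choosing $r_n\to0^+$ outside the exceptional polar set (which is empty when $s<0$) and passing to the limit in $\phi(r_n)\ge C$ gives $g(0)+\lim_{r\to0^+}v(r^2)\ge C=g(R)+v(R^2)$, which upon inserting the boundary values of $g$ is exactly \ref{item:NecCond BC0}. In the same way, letting $r\to\infty$ in $\phi(r)\ge C$ and inserting the large-$r$ asymptotics of $g$ (replacing $g(r)$ by $(R+r)^{-s}/s$, resp.\ $-\log(R+r)$) yields \ref{item:NecCond BCinf}.

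The main obstacle is the hypergeometric bookkeeping behind the first two paragraphs: deriving the closed form for $g$ from Funk--Hecke, and then, via Gauss summation and the contiguous relations, extracting the exact values of $g(R)$, $g'(R)$, $g''(R)$ — in particular verifying the matching of the inner and outer expansions through second order at $r=R$ and recognizing $s=d-3$ as the convergence boundary for $g''(R)$; the case $s=0$ additionally requires the $\HG32$ evaluation producing $b_d$. Once $g$ is understood, \ref{item:NecCond SP}--\ref{item:NecCond BCinf} are short consequences of \eqref{cond: Frostman}.
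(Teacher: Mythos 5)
Your proposal is correct and follows essentially the same route as the paper: the Frostman conditions force the radial modified potential to attain its global minimum at the sphere radius, the Funk--Hecke/Euler-integral closed form for the sphere potential is differentiated at $R$ (giving \ref{item:NecCond SP}--\ref{item:NecCond DD}, with $s<d-3$ ensuring the second derivative exists) and evaluated in the limits $r\to0^+$ and $r\to\infty$ (giving \ref{item:NecCond BC0}--\ref{item:NecCond BCinf}); the only difference is that you work in the variable $r$ while the paper uses $\lambda=\|x\|^2/R^2$, and your stated values $g'(R)=-\tfrac12 c_{s,d}R^{-s-1}$ and $g''(R)=\tfrac{c_{s,d}(sd-s^2-4s-2)}{4(d-s-3)}R^{-s-2}$ agree with the paper's $h_{s,d}'(1)$, $h_{s,d}''(1)$ after the chain rule.
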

In Theorem~\ref{thm:Necessary Condition Riesz},
condition \ref{item:NecCond SP} arises from the requirement
that $R_*$ be a stationary point of the modified potential, while
\ref{item:NecCond DD} corresponds to non-negativity of a second derivative at $R_*$.
Conditions \ref{item:NecCond BC0} and \ref{item:NecCond BCinf} arise from boundary conditions at the origin and infinity, respectively.
Note that \ref{item:NecCond SP} may have no solutions or more than one solution,
as shown in Appendix~\ref{Sec:LJEF num ex}.
Additionally,
\ref{item:NecCond BC0} is trivially satisfied
when $\lim_{r\to 0^+} v(r^2) = \infty$, while
\ref{item:NecCond BCinf} is trivially satisfied
when $s > 0$ and $\lim_{r\to\infty} v(r^2) = \infty$.

\begin{theorem}[Sufficient conditions]\label{thm:Sufficient Conditions Riesz}
    Suppose that $-2 < s \leq d-4$, and $v(\cdot)$ is $\mathcal{C}^2$ in the extended sense, and such that $v''(r) \geq 0$ for all $r \in [0, \infty)$.
    If there exists $R_* \in (0, \infty)$ that satisfies
    Theorem~\ref{thm:Necessary Condition Riesz}\ref{item:NecCond SP},
    then $\mueq=\sigma_{R_*}$.
\end{theorem}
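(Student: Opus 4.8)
\emph{Plan.} The strategy is to verify directly that $\sigma_{R_*}$ satisfies the Frostman conditions \eqref{cond: Frostman}. Since $K_s$ is conditionally strictly positive definite on compact sets for $-2<s<d$, any probability measure of finite energy satisfying \eqref{cond: Frostman} with a finite constant $C$ is automatically \emph{the} unique global minimizer of $I_{s,V}$; hence once \eqref{cond: Frostman} is checked for $\sigma_{R_*}$ we obtain existence, uniqueness, and $\mueq=\sigma_{R_*}$ simultaneously. Note $\sigma_{R_*}$ has finite energy: $I_s(\sigma_{R_*})=R_*^{-s}c_{s,d}/s$ is finite because $s\le d-4<d-1$, and $V$ is finite on $\dS^{d-1}_{R_*}$ because Theorem~\ref{thm:Necessary Condition Riesz}\ref{item:NecCond SP} forces $v'(R_*^2)=c_{s,d}/(4R_*^{s+2})\in(0,\infty)$, so $R_*^2$ is a finiteness point of $v$ (as a $\mathcal{C}^2$ function in the extended sense).

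\emph{Reduction to a one-variable convexity statement.} By radial symmetry the modified potential of $\sigma_{R_*}$ at $x$ depends only on $\rho:=\|x\|^2$; write it $\phi(\rho):=U_s^{\sigma_{R_*}}(x)+V(x)$. Using the scaling $U_s^{\sigma_{R_*}}(x)=R_*^{-s}U_s^{\sigma_1}(x/R_*)$ together with the Funk--Hecke evaluation of the unit-sphere potential (in the normalization $U_s^{\sigma_1}(r)=s^{-1}h_{s,d}(r^2)$, with the logarithmic analogue when $s=0$), one gets $\phi(\rho)=\tfrac{R_*^{-s}}{s}h_{s,d}(\rho/R_*^2)+v(\rho)$, where the profile $h_{s,d}$ is $\mathcal{C}^1$ across $t=1$ with $h_{s,d}(1)=c_{s,d}$ and $h_{s,d}'(1)=-\tfrac{s}{4}c_{s,d}$. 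Hence $\phi'(\rho)=\tfrac{R_*^{-s-2}}{s}h_{s,d}'(\rho/R_*^2)+v'(\rho)$, so $\phi'(R_*^2)=-\tfrac14 R_*^{-s-2}c_{s,d}+v'(R_*^2)$, and the hypothesis \ref{item:NecCond SP}, $R_*^{s+2}v'(R_*^2)=c_{s,d}/4$, is precisely the statement that $\rho_*:=R_*^2$ is a stationary point of $\phi$. It therefore suffices to prove that $\phi$ is convex on $(0,\infty)$: convexity together with $\phi'(\rho_*)=0$ forces $\phi(\rho)\ge\phi(\rho_*)=:C$ for all $\rho\ge0$, which is exactly the Frostman lower bound (in fact everywhere, not merely q.e.), with equality on $\supp\sigma_{R_*}=\dS^{d-1}_{R_*}$.

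\emph{The convexity.} Since $\phi''(\rho)=\tfrac{R_*^{-s-4}}{s}h_{s,d}''(\rho/R_*^2)+v''(\rho)$ and $v''\ge0$ by hypothesis, it is enough to show $\operatorname{sign}(h_{s,d}''(t))=\operatorname{sign}(s)$ for $t\in(0,\infty)\setminus\{1\}$, i.e.\ that $t\mapsto s^{-1}h_{s,d}(t)$ is convex on $(0,1)$ and on $(1,\infty)$; together with the $\mathcal{C}^1$-matching at $t=1$ this gives convexity on all of $(0,\infty)$. This is exactly where $s\le d-4$ enters — note in passing that $s<d-3$ already keeps $h_{s,d}''(1^-)$ finite — whereas for $s>d-4$ global convexity fails and only the pointwise inequality \ref{item:NecCond DD} survives. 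On $(0,1)$ the sign of $h_{s,d}''$ is read off from the hypergeometric series for $h_{s,d}$ and a $\HG32$ at argument $1$; on $(1,\infty)$ one transfers the information via the inversion $h_{s,d}(t)=t^{-s/2}h_{s,d}(1/t)$. This sign analysis of $h_{s,d}$, $h_{s,d}'$, $h_{s,d}''$ is the one nontrivial input, and it is supplied by the calculus of $\HG21$ and $\HG32$ developed in the paper. Granting it, $\phi$ is a lower semicontinuous, extended-sense convex function with $\phi'(\rho_*)=0$, hence $\phi\ge\phi(\rho_*)$ everywhere, which completes the verification of \eqref{cond: Frostman} and shows $\mueq=\sigma_{R_*}$.

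\emph{Main obstacle.} The hard part is the hypergeometric convexity statement $\operatorname{sign}(h_{s,d}'')=\operatorname{sign}(s)$ on $(0,\infty)$ under $s\le d-4$ — in particular controlling the exterior region $t>1$ through the inversion formula, and pinning down $d-4$ as the sharp threshold for \emph{global} convexity (as opposed to the local condition \ref{item:NecCond DD}, which is what $s<d-3$ buys). Everything else — the Frostman reduction, the derivative identity linking \ref{item:NecCond SP} to stationarity of $\phi$, the $\mathcal{C}^1$-gluing of the interior and exterior branches at the sphere, and the convexity-plus-critical-point conclusion — is routine, including the bookkeeping for the finitely many exceptional points and the possibly infinite values permitted by the ``extended sense'' hypothesis.
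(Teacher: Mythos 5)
Your overall route is the paper's route: reduce to the radial modified potential, use Theorem~\ref{thm:Necessary Condition Riesz}\ref{item:NecCond SP} to get stationarity at the sphere, and conclude via the Frostman conditions once the modified potential is shown to be convex, with $v''\geq 0$ handling the external-field part and the convexity of the sphere-potential profile $h_{s,d}$ carrying the dependence on $s\leq d-4$. This is exactly what the paper does through Corollaries~\ref{cor:Convex_external_fields_-2_s_d-4_inside_sphere} and \ref{cor:Convex_external_fields_-2_s_d-4_outside_sphere}, which prove $f''\geq 0$ separately on $[0,1]$ and on $[1,\infty)$ using \eqref{eq:Derivatives of f_s,d,R,v in sphere}, \eqref{eq:Derivatives_of_f_s_d_R_v_outside_sphere}, the Euler-integral positivity \eqref{eq:positivity of 2F1 on [0,1)} and Gauss summation \eqref{eq:HG1pos}, and then combine $f''\geq0$ with $f'(1)=0$.

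The genuine gap is that you never prove the one statement you yourself identify as the nontrivial input: the convexity (in the paper's normalization, $h_{s,d}''\geq 0$) on $(0,1)$ and on $(1,\infty)$ for $-2<s\leq d-4$. You write ``granting it,'' which leaves the theorem unproved, and the one mechanism you do propose for the exterior region --- transferring convexity through the inversion $h_{s,d}(t)=t^{-s/2}h_{s,d}(1/t)$ --- does not work as stated, since that transformation does not preserve convexity. What is actually needed, and what the paper does, is to differentiate the exterior representation directly (via \eqref{eq:HGderivFor z inverse}, giving \eqref{eq:Derivatives_of_f_s_d_R_v_outside_sphere} with $\ell=2$): positivity of $\HG21\bigl(\frac{s}{2}+2,\frac{2+s-d}{2};\frac{d}{2};\lambda^{-1}\bigr)$ then follows from the Euler integral precisely because $\frac{d}{2}>\frac{s}{2}+2$ when $s<d-4$, and the boundary case $s=d-4$ must be handled separately, where the hypergeometric degenerates to $1-\lambda^{-1}$ and $f''(\lambda)=\frac{R^{4-d}(d-2)}{4}\lambda^{-d/2}\bigl(1-\frac{1}{\lambda}\bigr)+R^4v''(R^2\lambda)\geq0$. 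A similar sign check of the Pochhammer prefactor $(2+s-d)(4+s-d)(s+2)\geq0$ is what gives convexity inside. Two smaller points: your sign criterion $\operatorname{sign}(h_{s,d}'')=\operatorname{sign}(s)$ is vacuous at $s=0$, which is included in the theorem (the paper's normalization of $h_{s,d}$, continuous through $s=0$, avoids this); and the global ``$\mathcal{C}^1$-gluing plus convexity on $(0,\infty)$'' step is unnecessary --- arguing on $[0,1]$ and $[1,\infty)$ separately with $f'(1)=0$, as the paper does, sidesteps any matching issue at the sphere.
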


As some examples, Theorem~\ref{thm:Sufficient Conditions Riesz} ensures that $\mueq=\sigma_{R_*}$ for the following external fields:

\begin{itemize} 
\item \textbf{Lennard\,--\,Jones type:} 
$V(x) =  \frac{\gamma}{\alpha} \| x \|^{\alpha} - \frac{\gamma \eta}{\beta} \| x \|^{\beta}$,
where $\gamma, \eta > 0$, so that
\begin{equation*} 
        v(\rho) = \frac{\gamma}{\alpha} \rho^{\frac{\alpha}{2}} -
                  \frac{\gamma \eta}{\beta} \rho^{\frac{\beta}{2}}.
\end{equation*}
Theorem~\ref{thm:Sufficient Conditions Riesz} is satisfied for
$\alpha \geq 2 \geq \beta$ with $\alpha > \beta$ and $R_*$ the unique solution to
\begin{equation}\label{Eq:EFLJ R eqn}
    R^{\alpha + s}-\eta R^{\beta + s} = \frac{c_{s,d}}{2\gamma}.
\end{equation}

\item \textbf{Exponential:} 
$V(x) =  \frac{\gamma}{\alpha \beta} \exp \left( \alpha \| x \|^{\beta} \right)$,
where $\gamma > 0$, so that
\begin{equation*}
  v(\rho) = \frac{\gamma}{\alpha \beta} \exp \left( \alpha \rho^{\frac{\beta}{2}} \right).
\end{equation*}
Theorem~\ref{thm:Sufficient Conditions Riesz} is satisfied for $\alpha > 0$ and $\beta \geq 2$ and $R_*$ the unique solution to
\begin{equation*}
         R^{\beta + s} \exp \left( \alpha R^{\beta} \right) = \frac{c_{s,d}}{2\gamma}.
\end{equation*}

\item \textbf{Power law with logarithm: }
$V(x) =  \gamma \| x \|^{\alpha}\log \left( \| x \|^2 \right)$, where $\gamma > 0$, so that
\begin{equation*}
    v(\rho) = \gamma \rho^{\frac{\alpha}{2}} \log(\rho).
\end{equation*}
Theorem~\ref{thm:Sufficient Conditions Riesz} is satisfied for $\alpha \geq 2$ and $R_*$ the unique solution to
\begin{equation*}
   R^{\alpha + s} (1+\alpha \log (R)) = \frac{c_{s,d}}{4\gamma}.
\end{equation*}

\item \textbf{Power-law 
with a sink:} $V(x) =  \frac{\gamma}{\alpha} \left| \| x \|^2 - R_0^2 \right|^{\alpha/2}$, where $\gamma, R_0 > 0$, so that
\begin{equation*}
  v(\rho) = \frac{\gamma}{\alpha} | \rho - R_0^2 |^{\alpha/2}.
\end{equation*}
Theorem~\ref{thm:Sufficient Conditions Riesz} is satisfied for $\alpha \geq 2$ and $R_* > R_0$ the unique solution to
\begin{equation*}
    R^{s+2} \left( R^2-R_0^2 \right)^{\frac{\alpha}{2}-1} = \frac{c_{s,d}}{2\gamma}.
\end{equation*}   
\end{itemize}

In contrast to these examples,
Lemmas \ref{lem:General Inside the Sphere}, \ref{lem:General Outside the Sphere} and Appendix~\ref{Ap:Ex not Suff}
give some sufficient conditions that can be used when $v$ is not convex.
Appendix~\ref{Sec:LJEF} discusses an example of a Lennard\,--\,Jones type external field
with $0 > \alpha > \beta$,
for which $v(\rho)$ has a finite limit as $\rho \to \infty$ and hence $v$ is not convex,
but $\sigma_{R_*}$ is still the equilibrium measure.

\subsection{Connection to other works}

One goal of this article is to provide more insight into the following question introduced in \cite{ChaSW23, ChaSW22}: when does dimension reduction of the support of the equilibrium measure occur for Riesz energies with external fields? For 
$V(x) = c \|x\|^{\alpha}$, Theorem \ref{thm:Sphere Min} provides a characterization of when the support of the equilibrium measure is a sphere, which leaves open the question of whether dimension reduction occurs for other combinations of $-2<s<d$ and $\alpha>\max\{0,-s\}$. Thus far, the answer appears to be negative, with  \cite[Theorem 1.2]{ChaSW23} showing that for $s = d-4$ and $ \alpha < \alpha_{d-4,d}^* = 2$, $\mueq$ has a full dimensional component, and with \cite[Theorem II.5]{Bil18}, \cite[Theorem 1.4]{ChaSW22}, \cite[Theorem 17]{HerGBS24}, \cite[Proposition 2.13]{Lop10}, \cite[Theorem 3.2, Example 3.2]{MhasS92}, and \cite{AgaDKKMMS19} (which studies the one-dimensional setting) finding that for certain values of $\alpha$ and $s \geq d-3$, the support of the equilibrium measure is $d$-dimensional and connected, providing more information than our general Theorem \ref{thm:Sphere not a minimizer} in these specific cases. While this work focuses on when the uniform measure on a sphere minimizes the energy, if the support instead has interior points, the distribution of the equilibrium measure on the interior is closely related to the fractional Laplacian of the external field $V$, see e.g.\ \cite{DydKK17a, Kwa17}.

The equilibrium measures for $I_{s,V}$ for power-law external fields, $V(x) = \gamma \frac{\|x\|^{\alpha}}{\alpha} $, appear in other contexts.
For instance, for $s=0$, $d\geq1$, and $\alpha = 2$,
the equilibrium measure is the asymptotic spectral distribution of the vectors of eigenvalues of $d$-tuples of commuting Hermitian $n \times n$ random matrices, as shown in \cite{McC23}. Thus, the equilibrium measures obtained by Chafa\"{\i}, Saff, and Womersley in \cite [Theorem 1.4]{ChaSW22} and \cite [Theorem 1.2(i)-(b)] {ChaSW23} extend to higher dimensions the Wigner semicircular law ($d=1$, Gaussian Unitary Ensemble) and
the circular law ($d=2$, for the planar Coulomb gas of the Complex Ginibre Ensemble). 

Wasserstein gradient flows and steepest descent flows for nonsingular Riesz energies with external fields arising from the maximum mean discrepancy functional have been studied in \cite{AltHS23, HagHABCS23, HerBGS23, HerGBS24, HerWAH23}, in relation to the halftoning problem in image processing. In this setting, the external field is the negative of the potential  of some probability measure, i.e. $V(x) = -U_s^{\nu}(x)$, acting as an attractive sink for mass.  
The Wasserstein steepest descent flows of the Riesz energy functional $I_s$ for $-2 < s < 0$ and an initial measure $\mu_0 = \delta_0$ was also considered in \cite{HerGBS24}, where it was emphasized that determining the direction of steepest descent requires one to solve the following constrained optimization problem:
\begin{equation}\label{eq:Constrained optimization alpha problem}
    \underset{\nu\in \mathcal{P}(\mathbb{R}^d)}{\Minz}\,\, I_{s}(\nu) \quad \text{such that}\quad \int_{\mathbb{R}^d} \| x \|^{\alpha} \rd\nu(x) = 1. 
\end{equation}
It was further shown, in the case that $\alpha = 2$, that this is equivalent to determining the initial step of a minimizing moment scheme (Jordan-Kinderlehrer-Otto scheme):
\begin{equation}\label{eq:JKO scheme}
    \underset{\mu\in \mathcal{P}(\mathbb{R}^d)}{\Minz}\,\, I_{s}(\mu) + \frac{\gamma}{\alpha} W_{\alpha}^{\alpha}(\delta_0,\mu),
\end{equation}
where $W_{\alpha}$ is the \textit{Wasserstein or Kantorovich distance} of order $\alpha$, defined in \eqref{eq:Wasserstein def}.

For $\alpha > 0$, let $\mathcal{P}_{\alpha}(\mathbb{R}^d)$ be the set of probability measures on $\mathbb{R}^d$ with finite $\alpha$-moment, i.e.\ $\int \| x\|^{\alpha} \rd\mu(x)<\infty$, and for $c > 0$, let $\mathcal{P}_{\alpha, c}(\mathbb{R}^d)$ be the set of probability measures with $\int \| x\|^{\alpha} \rd\mu(x) = c^{\alpha}$. For $\alpha \geq 1$, the Wasserstein distance of order $\alpha$, denoted $W_{\alpha}:\mathcal{P}_{\alpha}(\mathbb{R}^d)\times \mathcal{P}_{\alpha}(\mathbb{R}^d) \rightarrow [0,\infty)$, is defined by
\begin{equation}\label{eq:Wasserstein def}
    W_{\alpha}(\mu,\nu) := \left( \min_{\omega \in \Pi(\mu,\nu)} \int_{\mathbb{R}^d\times \mathbb{R}^d} \| x-y \|^{\alpha} \rd\omega(x,y) \right)^{1/\alpha}, \quad \mu,\nu \in \mathcal{P}_{\alpha}(\mathbb{R}^d),
\end{equation}
where $\Pi(\mu,\nu)$ is the set of probability measures on the product space $\mathbb{R}^d\times\mathbb{R}^d$ with marginal distributions $\mu$ and $\nu$. This set is convex and nonempty, since it contains the tensor product $\mu\otimes\nu$. The definition of $W_{\alpha}$ can be extended to $0 < \alpha < 1$.

The constrained optimization problem \eqref{eq:JKO scheme} can be interpreted as an energy minimization problem for a Riesz kernel with a power-law external field, since 
\begin{equation*}
    W_{\alpha}^{\alpha}(\delta_0,\mu) = \int_{\mathbb{R}^d} \| x \|^{\alpha} \rd\mu(x).
\end{equation*}

In the following proposition, we generalize  \cite[Proposition 16]{HerGBS24}, showing the equivalence of \eqref{eq:Constrained optimization alpha problem} and \eqref{eq:JKO scheme} for a wider range of $s$ and $\alpha$.
Its proof can be found in Section \ref{subsec:Proof for Wasserstein Result}.

\begin{proposition}\label{prop:connect to opt control}
Consider the external field $V(x) = \frac{\gamma}{\alpha}\| x \|^{\alpha}$ with $\gamma>0$ and $\alpha > \max\{-s, 0\}$. 
Suppose that $s\in(-\infty,d)$, $s \neq 0$, and denote by $f_{\#}\mu$ the pushforward of $\mu$ by a map $f$.
Then
\begin{itemize}

\item The energy $I_{s}$ is minimized over $\mathcal{P}_{\alpha, 1}(\mathbb{R}^d)$ by $\nu$\\
if and only if $\nu = (c\,\mathrm{Id})_{\#}\mu$ minimizes $I_{s,V}$ over $\mathcal{P}(\mathbb{R}^d)$, with $c = \big( \frac{s I_{s}(\nu)}{2 \gamma} \big)^{ \frac{1}{s+\alpha}}$.

\item The energy $I_{s, V}$ is minimized over $\mathcal{P}(\mathbb{R}^d)$ by $\mu$\\
if and only if $\nu = (c^{-1}\,\mathrm{Id})_{\#}\mu$ minimizes $I_{s}$ over $\mathcal{P}_{\alpha, 1}(\mathbb{R}^d)$, with $c = \Big( \int_{\mathbb{R}^d} \| x\|^{\alpha} \Big)^{ \frac{1}{\alpha}}$.

\end{itemize}

Suppose instead that $s =0$ and $ \alpha >0$. Then

\begin{itemize}

\item The energy $I_{0}$ is minimized over $\mathcal{P}_{\alpha, 1}(\mathbb{R}^d)$ by $\nu$\\if and only if $\mu = (c\,\mathrm{Id})_{\#}\nu$ minimizes $I_{0, V}$ over $\mathcal{P}(\mathbb{R}^d)$, with $c = \big( \frac{1}{\gamma} \big)^{ \frac{1}{\alpha}}$.

\item The energy $I_{0, V}$ is minimized over $\mathcal{P}(\mathbb{R}^d)$ by $\mu$\\if and only if $\nu = (c^{-1}\,\mathrm{Id})_{\#}\mu$ minimizes $I_{0}$ over $\mathcal{P}_{\alpha, 1}(\mathbb{R}^d)$, with $c = \Big( \int_{\mathbb{R}^d} \| x\|^{\alpha} \Big)^{ \frac{1}{\alpha}}$.

\end{itemize}

\end{proposition}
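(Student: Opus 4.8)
The statement is a pure rescaling phenomenon, and the plan is to isolate how $I_s$, the $\alpha$-moment, and hence $I_{s,V}$ transform under the dilations $D_c(x):=cx$, $c>0$, and then to carry out a one-variable optimization over the dilation factor $c$. First I would record the elementary identities coming from the change of variables $x\mapsto cx$: one has $\int\|x\|^{\alpha}\,\rd\big((D_c)_\#\mu\big)(x)=c^{\alpha}\int\|x\|^{\alpha}\,\rd\mu(x)$; from the homogeneity $K_s(cz)=c^{-s}K_s(z)$ (valid for $s\neq 0$), $I_s\big((D_c)_\#\mu\big)=c^{-s}I_s(\mu)$; and from $K_0(cz)=K_0(z)-\log c$ together with $\mu(\dR^d)=1$, $I_0\big((D_c)_\#\mu\big)=I_0(\mu)-\log c$. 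Combined with $I_{s,V}(\mu)=I_s(\mu)+2\int V\,\rd\mu$ and $V(x)=\frac{\gamma}{\alpha}\|x\|^{\alpha}$, these are the only ingredients.

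Next I would set up the bijection that underpins all four equivalences. Any $\mu\in\mathcal{P}(\dR^d)$ with $0<\int\|x\|^{\alpha}\,\rd\mu<\infty$ factors uniquely as $\mu=(D_c)_\#\nu$ with $\nu\in\mathcal{P}_{\alpha,1}(\dR^d)$ and $c=c(\mu):=\big(\int\|x\|^{\alpha}\,\rd\mu\big)^{1/\alpha}$, the inverse map being $\nu=(D_{1/c})_\#\mu$. Moreover, under the hypotheses the point mass $\delta_0$ is never a minimizer of $I_{s,V}$ — its energy is $+\infty$ when $s\geq 0$, and equals $0$ when $s<0$, whereas the infimum will be seen to be strictly negative — and, correspondingly, the minimizer of $I_s$ over $\mathcal{P}_{\alpha,1}(\dR^d)$ is never a unit point mass, so this factorization captures all candidate minimizers on both sides. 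Substituting it into the energy via the first step gives, for fixed $\nu\in\mathcal{P}_{\alpha,1}(\dR^d)$,
\[
g_\nu(c):=I_{s,V}\big((D_c)_\#\nu\big)=\begin{cases} c^{-s}I_s(\nu)+\dfrac{2\gamma}{\alpha}c^{\alpha}, & s\neq 0,\\ I_0(\nu)-\log c+\dfrac{2\gamma}{\alpha}c^{\alpha}, & s=0.\end{cases}
\]

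The heart of the proof is the minimization of $g_\nu$ over $c\in(0,\infty)$. Since $\alpha>\max\{-s,0\}$, one has $g_\nu(c)\to+\infty$ as $c\to\infty$, and the behaviour near $0$ is easily controlled, so $g_\nu$ attains a global minimum at its unique stationary point. For $s\neq 0$ this point is $c_*(\nu)=\big(\frac{sI_s(\nu)}{2\gamma}\big)^{1/(s+\alpha)}$, which is well defined because $sI_s(\nu)>0$ (the kernel $K_s$, hence $I_s$ at any non-degenerate probability measure, has the sign of $s$); substituting back, $\min_{c>0}g_\nu(c)=2\gamma\,\frac{s+\alpha}{s\alpha}\big(\frac{sI_s(\nu)}{2\gamma}\big)^{\alpha/(s+\alpha)}$, and differentiating this in $I_s(\nu)$ gives $\big(\frac{sI_s(\nu)}{2\gamma}\big)^{\frac{\alpha}{s+\alpha}-1}>0$, so $\min_{c>0}g_\nu(c)$ is a strictly increasing function of $I_s(\nu)$. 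For $s=0$ the stationary point is a constant independent of $\nu$, $\min_{c>0}g_\nu(c)=I_0(\nu)+\mathrm{const}$, again strictly increasing in $I_0(\nu)$.

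Finally, since $\inf_{\mu}I_{s,V}(\mu)=\inf_{\nu\in\mathcal{P}_{\alpha,1}}\min_{c>0}g_\nu(c)$ with the inner minimum strictly monotone in $I_s(\nu)$, a measure $\mu$ minimizes $I_{s,V}$ over $\mathcal{P}(\dR^d)$ if and only if $\mu=(D_{c_*(\nu)})_\#\nu$ (with $c_*$ the $\nu$-independent constant above when $s=0$) for some minimizer $\nu$ of $I_s$ over $\mathcal{P}_{\alpha,1}(\dR^d)$, in which case $c(\mu)=c_*(\nu)$ because $\nu\in\mathcal{P}_{\alpha,1}$, so the two descriptions of the dilation factor agree. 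Reading this equivalence in both directions, separately for $s\neq 0$ and $s=0$, yields the four bullets. The one genuinely substantive point is establishing the sign $sI_s(\nu)>0$ (i.e.\ that $I_s$ carries the sign of $s$ on probability measures), which underlies both the well-definedness of $c_*(\nu)$ and the monotonicity of the reduced objective, together with ruling out the degenerate measure $\delta_0$ in the regime $s<0$; everything else is change of variables and one-variable calculus.
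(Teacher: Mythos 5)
Your argument is correct and takes essentially the same route as the paper: decompose any measure of finite $\alpha$-moment as a dilation of a unit-moment measure, use the scaling identities for $I_s$ (resp.\ $I_0$) and for the $\alpha$-moment, and reduce to a one-variable minimization over the dilation factor $c$. The only cosmetic difference is the order of optimization --- you minimize over $c$ for each fixed $\nu$ and then use monotonicity of the reduced objective in $I_s(\nu)$, while the paper first takes the infimum over $\mathcal{P}_{\alpha,1}(\mathbb{R}^d)$ and then optimizes the scalar $c$; the sign fact $sI_s(\nu)>0$ that you isolate is exactly the paper's observation that $s$ and the infimum $A$ have the same sign.
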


The problem of finding the equilibrium of $I_{s,V}$ for power-law external fields, such as considered in Theorem \ref{thm:Sphere Min}, has a natural analogue, when replacing the Riesz kernel by
\begin{equation*}
    L_{\alpha, \beta}(x-y) = \frac{\|x-y\|^{\alpha}}{\alpha} - \frac{\|x-y\|^{\beta}}{\beta}, \qquad \alpha > \beta > -d
\end{equation*}
\emph{without} the presence of an external field. This Lennard\,--\,Jones type kernel forces particles to repel each other at short range and attract each other when far apart. The minimization of these power-law energies has been the subject of much study, see e.g.\ \cite{BalCLR13, CarDM16, CarFP17,  Fra22, DavLM22, GutCO22, GutCO23, DavLM23, CarS23, FraM24}. In particular, the results of \cite[Theorem 1, Remarks 1 and 2]{BalCLR13} and \cite[Theorems 3.4 and 3.10]{CarDM16} imply that for $-d < \beta \leq 3-d$ the support of any equilibrium measure cannot have finite $(d-1)$-dimensional Hausdorff measure, much like our Theorem \ref{thm:Sphere not a minimizer}. Furthermore, \cite[Theorem 1]{FraM24} shows that if the repulsion is sufficiently weak and the attraction sufficiently strong, $\sigma_R$ is the equilibrium measure of the power-law energy, much like Theorem \ref{thm:Sphere Min}.

\section{Proofs}\label{sec:Proofs}


In what follows, we will always assume that the external field $V$ is a radial function, i.e. there is some function $v$ as in \eqref{eq:v}. Since $K_{s}$ and $V$ are both rotationally invariant, 
%
and since $\mueq$ is unique if it exists, the equilibrium measure must be radial, and there is some $\nu \in \mathcal{P}([0, \infty))$ such that, writing $x = r \theta$ with $r \in [0, \infty)$ and $\theta \in \mathbb{S}^{d-1}$,
\begin{equation*}
    \rd\mueq(x) = \rd \sigma( \theta)\, \rd\nu(r).
\end{equation*}
From Proposition \ref{prop: Riesz/Log Potential of a Sphere} of the Appendix, the modified potential then becomes
\begin{equation*}
    U_{s}^{\mueq}(x) + V(x) =
    \int_{0}^{\infty} r^{-s}
    \hsd{\frac{\|x\|^2}{r^2}}
    \, \rd\nu(r) +
    v(\|x\|^2),
\end{equation*}
where
\begin{equation}\label{eq:Riesz Sphere Potential Function no Radius}
\hsd{\lambda} := 
\begin{cases}
   \frac{1}{s} (1 + \sqrt{\lambda})^{-s} \;
   \HG21\left(\tfrac{s}{2}, \tfrac{d-1}{2}; d-1; \frac{4 \sqrt{\lambda}}{(1+ \sqrt{\lambda})^2}\right) & s \neq 0, \\
    - \log(1 + \sqrt{\lambda}) + \frac{\sqrt{\lambda}}{(1+ \sqrt{\lambda})^2} \;\HG32\left(1, 1,  \tfrac{d+1}{2}; 2, d; \frac{4 \sqrt{\lambda}}{(1+ \sqrt{\lambda})^2}\right) & s = 0
\end{cases}.
\end{equation}
To prove that $\mueq$ is indeed the equilibrium measure for the energy, we need only show that the Frostman conditions \eqref{cond: Frostman} are satisfied.
The radial symmetry of both $\mueq$ and the modified potential then turn this into a one-dimensional problem. In particular, if we want to show that $\sigma_R$ is the equilibrium measure, we need only show that $R^{-s} \hsd{\frac{\|x\|^2}{R^2}} + v(\|x\|^2)$ achieves its global minimum at $\|x\| = R$. For ease of computation and notation, we set 
\begin{equation*}
    \lambda := \frac{\|x\|^2}{R^2}
\end{equation*}
and check that $R^{-s} \hsd{\lambda} + v(\lambda R^2)$ has a global minimum at $\lambda = 1$.

In Section \ref{subsec:Notation}, we introduce the notation that will facilitate the computation of derivatives that are needed in our proofs. In Section~\ref{subsec:Necessary Conditions} we prove
Theorem~\ref{thm:Necessary Condition Riesz}
giving necessary conditions for the uniform measure on a sphere to minimize the energy when $-2 < s < d-3$ and $v$ is sufficiently smooth. We prove Theorem \ref{thm:Sphere not a minimizer} in Section \ref{subsec:Sphere not a minimizer}, showing that we should not expect the uniform measure on a sphere to minimize the energy when $s \geq d-3$. Section~\ref{subsec:sufficient conditions} then provides a variety of sufficient conditions so that the equilibrium measure has spherical support. Each lemma in that section considers the behavior of the modified potential $U_{s}^{\sigma_R}(x) + V(x)$ for specific values of $s$, either outside or inside the sphere $\mathbb{S}_R^{d-1}$, and provides conditions on $V$ that guarantee the modified potential achieves its global minimum on the sphere. In Sections~\ref{subsec:sufficient conditions}
and \ref{subsec:Proof for power law external field} we use some of these sufficiency results to prove Theorems~\ref{thm:Sufficient Conditions Riesz} and \ref{thm:Sphere Min}, respectively. Throughout the paper, increasing means \textit{nondecreasing} and decreasing means \textit{nonincreasing}.

\subsection{Notation for proofs}\label{subsec:Notation}

For $-2 < s <d-1$ and $R > 0$, we use Proposition \ref{prop: Riesz/Log Potential of a Sphere} to rewrite the modified potential of $\sigma_R$ as
\begin{equation*}
 U^{\sigma_R}_s(x) + V(x) =
 \begin{cases}
    R^{-s} \hsd{\frac{\| x \|^2}{R^2}} + 
    v\left( \frac{\| x \|^2}{R^2} \cdot R^2\right) & s \neq 0\\[2ex]
    -\log(R) + \hzd{\frac{\| x \|^2}{R^2}} +
    v\left( \frac{\| x \|^2}{R^2} \cdot R^2\right) & s = 0
    \end{cases},
        \end{equation*}
where $h_{s,d}$ is as in \eqref{eq:Riesz Sphere Potential Function no Radius}. This characterization is useful due to the structure of the derivative of $h_{s,d}$:
\begin{equation}\label{eq:Riesz Sphere Pot Derive, no Radius, Main}
     h_{s,d}'(\lambda)  = \begin{cases}
        - \frac{d-s-2}{2d} \; \HG21 \left( 1+ \frac{s}{2}, \frac{4+s-d}{2}; \frac{d+2}{2}; \lambda \right) & \lambda \leq 1 \\
        - \frac{1}{2}\lambda^{-\frac{s}{2}-1} \; \HG21 \left( 1+ \frac{s}{2}, \frac{2+s-d}{2}; \frac{d}{2}; \lambda^{-1} \right) & \lambda \geq 1 
    \end{cases},
\end{equation}
which holds for $-2 < s < d-2$.
See Appendix~\ref{sec:Appendix Pot of Sphere} for more details. 

The modified potential for $\sigma_R$ is,
as a function of $\lambda \geq 0$, 
\begin{equation}\label{eq:Modified_potential_f_expression}
   f(\lambda) = f_{s,d,R,v}(\lambda) :=\begin{cases}
        R^{-s} \hsd{\lambda} + v(R^2 \lambda) & s \neq 0 \\
        - \log(R) + h_{0,d}(\lambda) + v(R^2 \lambda) & s = 0
    \end{cases},
\end{equation}
so that 
\begin{equation*}
f \left(\frac{\| x \|^2}{R^2}\right) = U^{\sigma_R}_s(x) + V(x).
\end{equation*}
Using \eqref{eq:Riesz Sphere Pot Derive, no Radius, Main} and \eqref{eq:HGderiv} from Appendix \ref{se:appendix HG},  we know that for $\ell \geq 1$ and $\lambda \in [0,1)$
\begin{equation}\label{eq:Derivatives of f_s,d,R,v in sphere}
\begin{split}
    f^{(\ell)}(\lambda) & = R^{-s} h_{s,d}^{(\ell)}(\lambda) + R^{2 \ell} v^{(\ell)}(R^2 \lambda) \\
    &= R^{-s} \frac{2+s-d}{2^{\ell} d} \left(\prod_{j=1}^{\ell - 1} \frac{(2 + 2 j +s - d) (s+2 j)}{d+2j} \right)  \; \HG21 \Big( \frac{s}{2} + \ell, \frac{2 + s -d}{2}+ \ell; \frac{d}{2}+ \ell; \lambda \Big)\\
    & \qquad + R^{2\ell} v^{(\ell)}(R^2 \lambda).
    \end{split}
\end{equation}
Similarly, using \eqref{eq:Riesz Sphere Pot Derive, no Radius, Main} and \eqref{eq:HGderivFor z inverse},  we know that for $\ell \geq 1$ and $\lambda \in (1,\infty)$
\begin{equation}\label{eq:Derivatives_of_f_s_d_R_v_outside_sphere}
\begin{split}
    f^{(\ell)}(\lambda) & = R^{-s} h_{s,d}^{(\ell)}(\lambda) + R^{2 \ell} v^{(\ell)}(R^2 \lambda) \\
    &= R^{-s} \frac{(-1)^{\ell}}{2^{\ell}} \lambda^{-\frac{s}{2}-\ell} \left(\prod_{j=1}^{\ell - 1} (s+2 j) \right) \; \HG21 \Big( \frac{s}{2} + \ell, \frac{2 + s -d}{2}; \frac{d}{2}; \lambda^{-1} \Big) + R^{2\ell} v^{(\ell)}(R^2 \lambda).
\end{split}
\end{equation}
When $\ell < d-s-1$ the limit exists and we can set
\begin{equation*}
     f^{(\ell)}(1) := \lim_{\lambda \rightarrow 1}  f^{(\ell)}(\lambda).
\end{equation*}
In many of our lemmas below, we will be restricting to $[0,1]$ or $[1, \infty)$. In those cases, we will replace the limit with the appropriate one-sided limit.

For $\kappa \in [0,1)$, let
\begin{equation}\label{eq:Def of inverse external field}
     q(\kappa) := \begin{cases}
         2 R^{s+2}\kappa^{- \frac{s}{2}-1} v'(R^2 \kappa^{-1}) & \kappa > 0 \\
         \lim\limits_{t \rightarrow 0^+} 2 R^{s+2} t^{- \frac{s}{2}-1} v'(R^2 t^{-1}) & \kappa = 0.
     \end{cases}
 \end{equation} 
 and
 \begin{equation}\label{eq: y_s,d def}
 y_{s,d}(\kappa) := -\HG21 \Big( \frac{s}{2} + 1, \frac{2 + s -d}{2}; \frac{d}{2}; \kappa \Big)
 \end{equation}
and let
\begin{equation}\label{eq:def of g_s,d,q}
g(\kappa) = g_{s,d,q}(\kappa) := y_{s,d}(\kappa) + q(\kappa),
\end{equation}
so
\begin{equation}\label{eq:equivalence with f and q}
    g(\kappa) =  2 R^s \kappa^{- \frac{s}{2}-1} f'(\kappa^{-1}).
\end{equation}
For all $\ell \geq 0$ and $\kappa \in [0,1)$, \eqref{eq:HGderiv} yields
\begin{align}\label{eq:Derivatives of g_s,d,q}
    g^{(\ell)}(\kappa) = & \ 
    y_{s,d}^{(\ell)}(\kappa) + q^{(\ell)}(\kappa) \nonumber\\ 
     = &  - \frac{1}{2^{\ell}} \left( \prod_{j=0}^{\ell - 1} \frac{(2 + 2 j +s - d) (s+2 j+2)}{d+2j} \right) \times \nonumber \\
     &  \HG21 \Big( \frac{s}{2} + \ell+1, \frac{2 + s -d}{2}+ \ell; \frac{d}{2}+ \ell; \kappa \Big)
     + q^{(\ell)}(\kappa),
\end{align}
and when $\ell < d -s - 2 $ the limit exists and we set
\begin{equation*}
g^{(\ell)}(1) := \lim\limits_{\kappa \rightarrow 1^-} g^{(\ell)}(\kappa).
\end{equation*}

\subsection{Proof of Theorem~\ref{thm:Necessary Condition Riesz} (necessary conditions)}\label{subsec:Necessary Conditions}


When $\sigma_R$ is the equilibrium measure,
the Frostman conditions \eqref{cond: Frostman}
imply that $\lambda=1$ must be a global minimizer of $f$ on $[0,\infty)$, see the discussion at the start of Section \ref{sec:Proofs}.
For $s < d - 3$, $f$ is twice continuously differentiable, so
the following four conditions must be satisfied:
$f'(1)=0$, $f''(1)\geq 0$, $f(0)\geq f(1)$, and
$\lim\limits_{\lambda \rightarrow \infty} f(\lambda) \geq f(1)$.

From \eqref{eq:Riesz Sphere Pot Derive, no Radius, Main}, \eqref{eq:HGderiv}, \eqref{eq:HG1pos}, \eqref{eq:Equating Higher Geom Series}, and \eqref{eq:c_s,d definition},  we have
\begin{align*}
    h_{s,d}'(1) & = \frac{s-d+2}{2d} \; \HG21 \left( \frac{s}{2}+1,\frac{s-d+2}{2}+1;\frac{d}{2}+1;1 \right) \\
    & = -\frac{1}{4} \; \HG21 \left( \frac{s}{2},\frac{s-d+2}{2};\frac{d}{2};1 \right) = -\frac{c_{s,d}}{4},
\end{align*}
and
\begin{align*}
    \hsdpp{1} &= \frac{(s-d+2)(s+2)(s-d+4)}{4d(d+2)}\; \HG21 \left( \frac{s}{2}+2,\frac{s-d+2}{2}+2;\frac{d}{2}+2;1 \right)\\
    &= \frac{(s+2)(d-s-4)}{16(d-s-3)} \; \HG21 \left( \frac{s}{2},\frac{s-d+2}{2};\frac{d}{2};1 \right) = \frac{(s+2)(d-s-4)c_{s,d}}{16(d-s-3)}.
\end{align*}
For the first condition $f'(1) = 0$, we have
\begin{equation*}
            0 = R^{-s}h_{s,d}'(1) + R^2 v'(R^2) =
            -\frac{c_{s,d}}{4} R^{-s} + R^2 v'(R^2),
\end{equation*}
which implies \ref{item:NecCond SP}.

For the second condition $f''(1) \geq 0$, we have
\begin{equation*}
    0\leq R^{-s} \hsdpp{1} + R^4 v''(R^2) =
    \frac{(s+2)(d-s-4)}{4(d-s-3)} \frac{c_{s,d}}{4} R^{-s} + R^4 v''(R^2).
\end{equation*}
Then for $R > 0$, $v'(R^2) = \frac{c_{s,d}}{4} R^{-s-2} > 0$ gives \ref{item:NecCond DD}.
        
If $s \neq 0$, $f(0) \geq f(1)$ is equivalent to
\begin{equation*}
    R^{-s}h_{s,d}(0)+v(0)\geq R^{-s}h_{s,d}(1)+v(R^2).
\end{equation*}
Note that by \eqref{eq:c_s,d definition}, \eqref{eq:FHHGb}, and \eqref{eq:Riesz Sphere Potential Function no Radius}, $h_{s,d}(0)=1/s$ and $h_{s,d}(1)=c_{s,d}/s$.
On the other hand, if $s = 0$, $f(0) \geq f(1)$ is equivalent to
\begin{equation*}
    h_{0,d}(0)+v(0)\geq h_{0,d}(1)+v(R^2).
\end{equation*}
Again, by \eqref{eq:b_d definition} and  \eqref{eq:Riesz Sphere Potential Function no Radius},
$h_{0,d}(0) = 0$ and $h_{0,d}(1) = b_d$.
We thus obtain \ref{item:NecCond BC0}. 

If $s \neq 0$, then \eqref{eq:c_s,d definition}, \eqref{eq:Riesz Sphere Potential Function no Radius}, and \eqref{eq:HG} show us that our last condition on $f$ is equivalent to
\begin{align*}
    \frac{R^{-s}}{s} c_{s,d} + v(R^2) &
    \leq \lim_{\lambda \rightarrow \infty} \left( v(R^2 \lambda) + \frac{R^{-s}}{s} (1 + \sqrt{\lambda})^{-s} \left( 1 + \frac{s \sqrt{\lambda}}{(1 + \sqrt{\lambda})^2} + \mathcal{O}\Bigl( \frac{1}{\lambda}\Bigr) \right) \right)\\
    & = \lim_{r \rightarrow \infty} \left( v(r^2) + \frac{(R + r)^{-s}}{s} \right).
\end{align*}
If $s=0$, due to \eqref{eq:b_d definition}, \eqref{eq:Riesz Sphere Potential Function no Radius}, and \eqref{eq:HG32}, our last condition is instead equivalent to
\begin{align*}
    - \log(R) + b_d + v(R^2) & \leq \lim_{\lambda \rightarrow \infty} \Big( -\log(R) - \log(1 + \sqrt{\lambda}) + \mathcal{O}(\frac{1}{\sqrt{\lambda}}) + v( R^2 \lambda) \Big)\\
    & = \lim_{r \rightarrow \infty} \Big( v(r^2) - \log(R + r) \Big).
\end{align*}
We now obtain \ref{item:NecCond BCinf}.  \qedsymbol{}


\begin{remark}[Alternative viewpoint]\label{rem: Eng SP}
Assuming that $\sigma_R$ is the equilibrium measure, the energy \eqref{eq:Energy Def} of $\sigma_R$,
using Proposition~\ref{prop:Riesz Energy of a sphere}, is
\[
  I_{s,V}(\sigma_R) = 2 v(R^2) +
  \begin{cases}
     \frac{R^{-s}}{s} c_{s,d}, & s \neq 0\\
     - \log(R) + b_{d}, & s = 0
  \end{cases} .
\]
Treating this as a function of $R$, the derivative (remembering that $c_{0, d} = 1$)
\[
  \frac{\partial }{\partial R} \, I_{s,V}(\sigma_R) = 4 R v'(R^2) - 
     c_{s,d} R^{-s-1} 
\]
shows that the necessary condition \ref{item:NecCond SP} must be satisfied at a stationary point.
Moreover,
\[
  \frac{\partial^2 }{\partial R^2} \, I_{s,V}(\sigma_R) = 4 v'(R^2) + 8 R^2 v''(R^2) +
     (s+1) c_{s,d} R^{-s-2} \geq 0,
\]
at a minimum, gives $\frac{R^2 v''(R^2)}{ v'(R^2)} \geq -\frac{s+2}{2}$ at a stationary point,
which is weaker than \ref{item:NecCond DD}.
\end{remark}

\subsection{Proof of Theorem \ref{thm:Sphere not a minimizer}}\label{subsec:Sphere not a minimizer}

In order to prove Theorem \ref{thm:Sphere not a minimizer}, we must consider three cases separately. For the first, we will need the following well-known result (see, e.g., \cite[Theorem 4.3.1]{Fal03}).
\begin{theorem}\label{thm:Dim of support for Capacity}
Let $A \subset \mathbb{R}^d$ be compact and $s > 0$. If the $s$-dimensional Hausdorff measure of $A$, $\mathcal{H}_s(A)$, is finite then for every probability measure $\mu \in \mathcal{P}(A)$,
\[
  \int_A \int_A  K_{s}(x,y) \rd\mu(x) \rd\mu(y) = \infty.
\]
\end{theorem}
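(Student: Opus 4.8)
The statement to prove is Theorem~\ref{thm:Dim of support for Capacity}: if $A \subset \mathbb{R}^d$ is compact with $\mathcal{H}_s(A) < \infty$ for some $s > 0$, then every probability measure $\mu \in \mathcal{P}(A)$ has infinite $s$-energy $\iint K_s(x,y)\,\rd\mu(x)\rd\mu(y) = \infty$.

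\textbf{Plan of proof.} This is the classical Frostman-type lower bound linking Hausdorff measure and Riesz capacity; the standard reference is \cite[Theorem 4.3.1]{Fal03}, and the argument is short. The key idea is to show that if a probability measure $\mu$ on $A$ had finite $s$-energy, then $\mu$ would have to assign "Frostman-like" mass bounds to small balls, forcing $\mathcal{H}_s(A) = \infty$ (or forcing $\mu$ to charge a set of positive finite $\mathcal{H}_s$-measure too heavily), a contradiction. First I would reduce to the normalized kernel issue: recall that in this paper $K_s(x) = \frac{1}{s\|x\|^s}$, so finiteness of $\iint K_s\,\rd\mu\rd\mu$ is equivalent (since $s>0$ is a fixed positive constant) to finiteness of the usual Riesz $s$-energy $\iint \|x-y\|^{-s}\,\rd\mu(x)\rd\mu(y)$; the factor $1/s$ is harmless. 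So assume for contradiction that $\mu \in \mathcal{P}(A)$ satisfies $\iint \|x-y\|^{-s}\,\rd\mu(x)\rd\mu(y) < \infty$.

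\textbf{Key steps.} The main step is to prove a pointwise decay estimate on the $\mu$-mass of balls. From finiteness of the energy, the potential $U^\mu(x) := \int \|x-y\|^{-s}\,\rd\mu(y)$ is finite for $\mu$-a.e.\ $x$; fix such an $x$. For any $r > 0$ write $\mu(B(x,r))$ and estimate $U^\mu(x) \geq \int_{B(x,r)} \|x-y\|^{-s}\,\rd\mu(y)$. Using the layer-cake / dyadic decomposition of $B(x,r)$ into annuli $B(x,2^{-k}r)\setminus B(x,2^{-(k+1)}r)$, one gets a bound of the form $U^\mu(x) \gtrsim \sum_k (2^{-k}r)^{-s}\big(\mu(B(x,2^{-k}r)) - \mu(B(x,2^{-(k+1)}r))\big)$, and Abel summation (or directly the inequality $\int_{B(x,r)}\|x-y\|^{-s}\rd\mu(y) \ge r^{-s}\mu(B(x,r))$ plus a telescoping argument) shows that $\limsup_{r\to 0} r^{-s}\mu(B(x,r)) < \infty$ is impossible unless... more precisely the cleanest route: since $U^\mu(x) < \infty$, for any $\varepsilon > 0$ there is $r_0$ with $\int_{B(x,r_0)}\|x-y\|^{-s}\,\rd\mu(y) < \varepsilon$, and since this integral is at least $r^{-s}\mu(B(x,r))$ for $r \le r_0$, we get $\mu(B(x,r)) \le \varepsilon r^s$ for all $r \le r_0$. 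Thus $\limsup_{r\to 0^+} \mu(B(x,r))/r^s = 0$ for $\mu$-a.e.\ $x$.

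Now invoke the standard density comparison theorem for Hausdorff measures (Falconer, Prop.~2.2 / the "mass distribution principle" in reverse): if $\mu(B(x,r)) \le c\, r^s$ for all small $r$ and all $x$ in a set $E$ with $\mu(E) > 0$, then $\mathcal{H}_s(E) \geq \mu(E)/c$; letting $c \to 0$ (using that the bound holds with $c$ arbitrarily small on a set of full $\mu$-measure, by a countable union over $c = 1/n$ of the sets where the density-zero bound kicks in at some scale) forces $\mathcal{H}_s(A) \ge \mathcal{H}_s(\{$set of full $\mu$-measure$\}) = \infty$. This contradicts $\mathcal{H}_s(A) < \infty$. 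Hence no such $\mu$ exists and the energy is always infinite.

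\textbf{Main obstacle.} The only delicate point is the passage from "$\mu(B(x,r)) \le \varepsilon_x r^s$ for $r < r_0(x)$ at $\mu$-a.e.\ $x$, with $\varepsilon_x$ arbitrarily small" to a genuine lower bound on $\mathcal{H}_s(A)$ that blows up: one must be careful that $r_0(x)$ and $\varepsilon_x$ depend on $x$, so one stratifies $A$ into the sets $A_{n,m} = \{x : \mu(B(x,r)) \le \tfrac1n r^s \text{ for all } r \le \tfrac1m\}$, notes $\mu(\bigcup_m A_{n,m}) = 1$ for each $n$, picks $m$ with $\mu(A_{n,m}) \ge 1/2$, and applies the mass distribution principle on $A_{n,m}$ to get $\mathcal{H}_s(A) \ge \mathcal{H}_s(A_{n,m}) \ge \tfrac12 n \to \infty$. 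Since this is entirely standard, I would in fact simply cite \cite[Theorem 4.3.1]{Fal03} as is done in the excerpt, and include the dyadic-annulus estimate only if a self-contained argument is wanted; the telescoping estimate $\int_{B(x,r_0)}\|x-y\|^{-s}\rd\mu(y) \ge r^{-s}\mu(B(x,r))$ is the one computational ingredient and it is immediate from monotonicity of $\|x-y\|^{-s}$ on the ball.
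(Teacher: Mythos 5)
Your argument is correct and is precisely the classical one behind the result the paper invokes: finite $s$-energy forces $\mu(B(x,r))\le \varepsilon r^s$ for all small $r$ at $\mu$-a.e.\ $x$ (your telescoping/monotonicity estimate, with atoms automatically excluded since an atom makes the potential infinite), and the mass distribution principle applied to the stratified sets $A_{n,m}$ then yields $\mathcal{H}_s(A)=\infty$, contradicting the hypothesis; the stratification subtlety you flag is exactly the point that needs care, and you handle it correctly. The paper itself offers no proof of this statement — it simply cites \cite[Theorem 4.3.1]{Fal03} — so your write-up coincides with the standard cited argument rather than diverging from anything in the paper.
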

In other word, if there exists some measure $\mu \in \mathcal{P}(A)$ such that
$$ \int_A \int_A  K_{s}(x,y) \rd\mu(x) \rd\mu(y) < \infty,$$
then either $\dim_H(A) > s$, or $\dim_H(A) = s$ and $\mathcal{H}_s(A) = \infty$, where $\dim_H(A)$ is the Hausdorff dimension of $A$.

\begin{lemma}\label{lem:perfect set as a minimizer}
Suppose $I_{s,V}$ has a unique compactly supported equilibrium measure $\mueq$ and one of the following is true:
\begin{enumerate}[label=\textnormal{(\alph*)}]
    \item\label{item:Sphere not Min i}  $d-1 \leq s < d$ and $v$ is lower semi-continuous and bounded from below,
    \item\label{item:Sphere not Min ii} $d-2 \leq s < d-1$ and $v$ is bounded from below and $\mathcal{C}^1$ in the extended sense,
    \item\label{item:Sphere not Min iii} $d \geq 3$, $d-3 \leq s < d-2$, and $v$ is bounded from below and $\mathcal{C}^2$ in the extended sense such that $v''$ is finite on $(0, \infty)$.
    \item\label{item:Sphere not Min iv} $d = 2$, $-1 \leq s < 0$, and $v$ is bounded from below, $\mathcal{C}^2$ in the extended sense, and such that $\lim\limits_{\rho \rightarrow 0^+} \rho^{\frac{s}{2}+1} v'(\rho) = 0$ and $v''$ is finite on $(0, \infty)$.
\end{enumerate}
With $x = r \theta$, $r \in [0, \infty)$, and $\theta \in \mathbb{S}^{d-1}$, let $\nu \in \mathcal{P}\big([0,\infty)\big)$ such that
\begin{equation*}
\rd\mueq(x) = \rd\sigma(\theta)\rd\nu(r).
\end{equation*}
Then $\supp(\nu)$ must be a perfect set (i.e. have no isolated points).
\end{lemma}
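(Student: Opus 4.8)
The plan is to assume that $\supp(\nu)$ has an isolated point $r_0\in[0,\infty)$ and derive a contradiction. Since an isolated point of the support of a Borel measure on $[0,\infty)$ is an atom, set $c:=\nu(\{r_0\})>0$, so that $\mueq=c\,\sigma_{r_0}+\mu'$ with $\mu':=\mueq-c\,\sigma_{r_0}\geq0$ a (sub‑probability) measure whose radial support lies at a positive distance $\epsilon$ from $\dS^{d-1}_{r_0}$ (with the convention $\sigma_0:=\delta_0$). Throughout I would use that $\mueq$ being a genuine equilibrium measure forces $\mathcal W_{s,V}=I_{s,V}(\mueq)<\infty$, hence, since $v$ is lower semicontinuous and bounded below so that $V$ is bounded below on the compact set $\supp\mueq$, that $I_s(\mueq)<\infty$. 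The first (energy) step handles case \ref{item:Sphere not Min i} and the possibility $r_0=0$ in cases \ref{item:Sphere not Min i}--\ref{item:Sphere not Min iii}: in case \ref{item:Sphere not Min i} one has $\mathcal H_{d-1}(\dS^{d-1}_{r_0})<\infty$, hence $\mathcal H_s(\dS^{d-1}_{r_0})<\infty$ since $s\geq d-1$, so Theorem~\ref{thm:Dim of support for Capacity} gives $I_s(\sigma_{r_0})=\infty$; as $s>0$ we have $K_s\geq0$, whence $I_s(\mueq)\geq c^2 I_s(\sigma_{r_0})=\infty$, a contradiction. When $r_0=0$ and $s\geq0$ (covering cases \ref{item:Sphere not Min i}--\ref{item:Sphere not Min iii}) the same follows directly, since $K_s(0)=\infty$ and $K_s$ is bounded below on the compact $\supp\mueq$, so $U_s^{\mueq}(0)=\infty$, and then $I_s(\mueq)=\int U_s^{\mueq}\,\rd\mueq=\infty$ because $U_s^{\mueq}$ is itself bounded below on $\supp\mueq$.

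For $r_0>0$ in cases \ref{item:Sphere not Min ii}--\ref{item:Sphere not Min iv} I would argue via Frostman. Let $g(r)$ denote the rotationally invariant modified potential $U^{\mueq}_s(x)+V(x)$ as a function of $r=\|x\|$. By Proposition~\ref{prop: Riesz/Log Potential of a Sphere}, for $s\neq0$ and $\|x\|=r$ we have $g(r)=c\,r_0^{-s}\,\hsd{r^2/r_0^2}+U_s^{\mu'}(x)+v(r^2)$, with the evident modification for $s=0$; here $U_s^{\mu'}$ is a $\mathcal C^\infty$ function of $r$ near $r_0$ because $\supp\mu'$ lies at distance $\geq\epsilon$ from $\dS^{d-1}_{r_0}$, and $v$ is $\mathcal C^2$ (resp.\ $\mathcal C^1$ in the extended sense) near $r_0^2\in(0,\infty)$. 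Since $s<d-1$ here, each sphere $\dS^{d-1}_r$ has positive $s$-capacity (it carries $\sigma_r$ with $I_s(\sigma_r)<\infty$), so the Frostman inequality $U_s^{\mueq}+V\geq C$ q.e.\ holds identically on every such sphere; thus $g(r)\geq C$ for all $r>0$, while $g(r_0)\leq C$ since $\dS^{d-1}_{r_0}\subseteq\supp\mueq$, and hence $g(r_0)=C$ is the global minimum of $g$. I would contradict this by showing $r_0$ is not even a local minimiser, using that the radius profile $\lambda\mapsto\hsd{\lambda}$ bends strictly \emph{downward} at $\lambda=1$. From \eqref{eq:Riesz Sphere Pot Derive, no Radius, Main}, \eqref{eq:HGderiv} and Gauss's evaluation at $1$ one obtains the trichotomy: (i) if $d-2<s<d-1$ then $\hsdp{1^-}=+\infty$ and $\hsdp{1^+}=-\infty$; (ii) if $s=d-2$ then $\hsd{\cdot}$ is constant on $[0,1]$ and $\hsdp{1^+}<0$; (iii) if $d-3\leq s<d-2$ (in particular $d=2$, $-1\leq s<0$), and also when $s=0$ with $c_{0,d}=1$, then $\hsd{\cdot}$ is $\mathcal C^1$ at $1$ with $\hsdp{1}=-c_{s,d}/4<0$ while $\hsdpp{\lambda}\to-\infty$ as $\lambda\to1$. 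In cases (i)--(ii), inspecting the one-sided derivatives $g'(r_0^{\pm})$ shows at once that $g$ has no interior local minimum at $r_0$ (one-sided derivatives with the wrong ordering, or a strict left maximum). In case (iii), $g$ is $\mathcal C^1$ near $r_0$ with $g'(r_0)=0$, and since $\hsdpp{\cdot}$ diverges to $-\infty$ at $1$ whereas the remaining terms of $g$ have bounded second derivative near $r_0$, for $N$ large enough there is $\delta_N>0$ with $\tfrac{\rd^2}{\rd r^2}\big(c\,r_0^{-s}\hsd{r^2/r_0^2}\big)<-N$ on $(r_0,r_0+\delta_N)$, making $g'$ strictly decreasing — hence strictly negative — there, so $g(r)<g(r_0)=C$ on $(r_0,r_0+\delta_N)$, contradicting $g\geq C$. (If $v'(r_0^2)=\pm\infty$, which in cases \ref{item:Sphere not Min iii}--\ref{item:Sphere not Min iv} is excluded by the assumption that $v''$ is finite on $(0,\infty)$ and in case \ref{item:Sphere not Min ii} occurs for at most finitely many $r_0$, the contradiction comes from the appropriate one-sided neighbourhood of $r_0$ in the same fashion.)

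It remains to treat $r_0=0$ in case \ref{item:Sphere not Min iv}. Now $s<0$, so $\delta_0$ has finite energy and the energy step fails; instead $U_s^{\delta_0}(x)=\tfrac1s\|x\|^{-s}=\tfrac1s\|x\|^{|s|}$, so for $\|x\|=r$ we get $g(r)=\tfrac cs r^{|s|}+U_s^{\mu'}(x)+v(r^2)$ with $U_s^{\mu'}$ a $\mathcal C^\infty$ even function of $r$ near $0$, whence $g'(r)=-c\,r^{|s|-1}+O(r)+2r\,v'(r^2)$. The extra hypothesis $\lim_{\rho\to0^+}\rho^{\frac s2+1}v'(\rho)=0$ gives $2r\,v'(r^2)=o(r^{|s|-1})$, so $g'(r)<0$ for all small $r>0$. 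Since $\{0\}$ has positive $s$-capacity when $s<0$, Frostman's inequality holds at $0$, giving $g(0)=C$ and $g\geq C$ on $(0,\infty)$; but $g'<0$ near $0$ forces $g(r)<g(0)=C$ for small $r>0$, a contradiction. This is precisely where the extra hypothesis is needed: without it $v'$ may blow up at $0$ fast enough to reverse the sign of $g'$, which is what allows $\delta_0$ to be the equilibrium measure, see \cite[Remark~1.1]{ChaSW23}.

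The routine parts are the decomposition $\mueq=c\,\sigma_{r_0}+\mu'$ and the capacity/Frostman reduction. The substantive step, and the main obstacle, is the trichotomy for $\hsd{\cdot}$ near $\lambda=1$ together with the quantitative statement that its downward bending dominates any admissible $\mathcal C^2$ (or extended-$\mathcal C^1$) perturbation; this reduces to the behaviour of $\HG21$ at the branch point $1$ (Gauss's formula and the connection formulae at $1$), with the bookkeeping split across the three ranges of $s$ and, separately, the logarithmic case $s=0$.
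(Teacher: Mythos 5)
Your proposal is correct and follows essentially the same route as the paper's proof of Lemma~\ref{lem:perfect set as a minimizer}: rule out an atom at the origin (by infinite energy for $s\geq 0$, and by the strict decrease of the modified potential near $0$ under the extra hypothesis in case \ref{item:Sphere not Min iv}), then for an isolated sphere use Theorem~\ref{thm:Dim of support for Capacity} in case \ref{item:Sphere not Min i}, the blow-up/jump of the first derivative of the sphere's own potential in case \ref{item:Sphere not Min ii}, and the divergence of its second derivative at the sphere in cases \ref{item:Sphere not Min iii}--\ref{item:Sphere not Min iv}, with the contributions of the remaining mass controlled because it lies at positive distance from the sphere. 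The only differences are cosmetic (working in the radial variable $r$ rather than $\lambda=\|x\|^2/R^2$, taking the second-derivative blow-up from the right instead of the left, and phrasing the contradiction via the impossibility of a local minimum rather than via the extended-sense continuity of $v'$ or $v''$), so no gap to report.
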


\begin{proof}[Proof of Theorem \ref{thm:Sphere not a minimizer}]
Our claim follows immediately from Lemma \ref{lem:perfect set as a minimizer}. 
\end{proof}

\textit{Overview of the proof of Lemma \ref{lem:perfect set as a minimizer}.} We need to rule out that there is any isolated point in the support of $\nu$. We first address the possibility that there is an isolated point mass at $0$, using that fact that $K_s$ is singular at $0$ in the first three cases,
\ref{item:Sphere not Min i}-\ref{item:Sphere not Min iii},
and the fact that the modified potential is strictly decreasing near $0$
in the last case \ref{item:Sphere not Min iv}.
We then consider the possibility of an isolated sphere $\mathbb{S}_R^{d-1}$ in the support of $\mueq$, and find contradictions in each setting. 

In case \ref{item:Sphere not Min i},
Theorem \ref{thm:Dim of support for Capacity} immediately tells us that this contributes an infinite amount of energy to our total energy, contradicting that we have finite energy. In the remaining three cases, we consider the potential $U_{s}^{\mueq}$, which may have contributions from the support from the interior of the sphere, the support from the exterior of the sphere, and the support on the sphere itself. 

To handle case \ref{item:Sphere not Min ii},
we write the modified potential as a one-dimensional function and take a derivative. We then find that as $\|x\|$ approaches $R$, the contributions to the derivative from any support inside or outside the sphere are finite, but the contribution from the sphere itself goes to $\infty$ as $\|x\|$ approaches $R$ from below, and $- \infty$ as $\|x\|$ approaches $R$ from above. Since, by the Frostman conditions \eqref{cond: Frostman}, the modified potential must achieve its minimum at $\|x\| = R$, this means that $v'(\|x\|^2)$ must approach $- \infty$ and $\infty$ as $\|x\|$ approaches $R$ from below and above, respectively. This breaks the continuous differentiability of $v$. 

For cases \ref{item:Sphere not Min iii} and \ref{item:Sphere not Min iv},
we also interpret the modified potential as a one-dimensional function and utilize the second derivative. We then see that as $\|x\|$ approaches $R$, the contributions from any support inside or outside the sphere are finite, but the contribution from the sphere itself goes to $-\infty$ as $\|x\|$ approaches $R$ from below. Since, by the Frostman conditions \eqref{cond: Frostman}, the modified potential must achieve its minimum at $\|x\| = R$, this means that $v''(\|x\|^2)$ must approach $\infty$ as $\|x\|$ approaches $R$ from below. This breaks the finiteness and continuity of the second derivative  of $v$.

\begin{proof}[Proof of Lemma \ref{lem:perfect set as a minimizer}]

Let $m_0 := \nu(\{0\})$. Suppose, for the sake of contradiction, that $s \geq 0$ and $m_0 > 0$. Since $v$ is bounded from below by some constant $C$
$$ I_{s,V}(\mueq) \geq 2 C + m_0^2 I_s(\delta_0) = \infty  $$
which contradicts $I_{s,V}(\mueq)$ being finite, and so $m_0 = 0$. This means that $0$ cannot be an isolated point in the support of $\nu$.

In case \ref{item:Sphere not Min iv}, $K_s$ is not singular, so we may have $m_0 > 0$. Suppose, for the sake of contradiction, that $0$ is an isolated point in the support of $v$. Then our modified potential is
\[
  m_0 \frac{\|x\|^{-s}}{s} + v(\|x\|^2) + 
  \int_{r_2}^{\infty} r^{-s} \hsd{\frac{\|x\|^2}{r^2}} \rd\nu(r),
\]
for some radius $r_2>0$. We can rewrite this as
\[
  q(\rho) = m_0 \frac{\rho^{-\frac{s}{2}}}{s} + v(\rho) + 
  \int_{r_2}^{\infty} r^{-s} \hsd{\frac{\rho}{r^2}} \rd\nu(r)
\]
so for $\rho \in (0, r_2^2),$
\[
  q'(\rho) = -\frac{m_0}{2} \rho^{-\frac{s}{2}-1} + v'(\rho) + 
  \int_{r_2}^{\infty} r^{-s-2} \hsdp{\frac{\rho}{r^2}} \rd\nu(r).
\]
From \eqref{eq:positivity of 2F1 on [0,1)} and \eqref{eq:HGderiv}, we have that $ \HG21 \Big( \frac{s}{2} + 1, \frac{2 + s -d}{2} +1; \frac{d}{2}+1; t \Big)$ is positive and increasing on $[0,1)$, so for $0 \leq \rho < r_2^2$, we have that
\begin{align*}
 0 & \leq \int_{r_2}^{\infty} r^{-s-2}  \; \HG21 \Big( \frac{s}{2} + 1, \frac{2 + s -d}{2}+ 1; \frac{d}{2}+ 1;  \frac{\rho}{r^2} \Big) \rd\nu(r)\\
& \leq \nu([r_2, \infty)) r_2^{-s-2}  \; \HG21 \Big( \frac{s}{2} + 1, \frac{2 + s -d}{2}+ 1; \frac{d}{2}+ 1;  \frac{\rho}{r_2^2} \Big)\\
& \leq  r_2^{-s-2} \; \HG21 \Big( \frac{s}{2} + 1, \frac{2 + s -d}{2}+ 1; \frac{d}{2}+ 1;  1 \Big)  < \infty.
\end{align*}

Since $\lim_{\rho \rightarrow 0^+} -\frac{m_0}{2} \rho^{-\frac{s}{2}-1} + v'(\rho) = - \infty$, we see
that $q$ is strictly decreasing near $0$, and so cannot achieve its minimum at $0$,
contradicting the Frostman conditions.
Thus $0$ is not an isolated point of $\supp \nu$.

Now suppose, for the sake of contradiction, that $R>0$ is an isolated point in the support of $\nu$, so $m_R := \nu(\{R\}) > 0$.
Let $0\leq r_1< R < r_2$ be such that $(r_1, r_2) \cap \supp(\nu) = \{ R \}$
and let $\tilde{\nu} = \nu - m_0 \delta_0$.
Then
\begin{equation*}
\begin{split}
  U_{s}^{\mueq}(x) = & m_0 \frac{\|x\|^{-s}}{s} +
  \int_{0}^{r_1}  r^{-s} \hsd{\frac{\|x\|^2}{r^2}} \rd\tilde{\nu}(r)\\ 
  & + m_R R^{-s} \hsd{\frac{\|x\|^2}{R^2}} + 
  \int_{r_2}^{\infty} r^{-s} \hsd{\frac{\|x\|^2}{r^2}} \rd\tilde{\nu}(r).
\end{split}
\end{equation*}
Analogous to \eqref{eq:Modified_potential_f_expression}, 
the modified potential, 
as a function of $\lambda = \frac{\|x\|^2}{R^2}$,
is then
\begin{equation}\label{eq:Defintion of p, mod potential, perfect set}
\begin{split}
  p( \lambda) & := 
  m_0\frac{R^{-s} \lambda^{-\frac{s}{2}}}{s} +
  \int_{0}^{r_1}  r^{-s} \hsd{\frac{R^2}{r^2} \lambda} \rd\tilde{\nu}(r)\\
  & \qquad+m_R R^{-s} \hsd{\lambda} + \int_{r_2}^{\infty} r^{-s} \hsd{\frac{R^2}{r^2} \lambda} \rd\tilde{\nu}(r) + v(R^2 \lambda).
\end{split}
\end{equation}
Recall that in cases \ref{item:Sphere not Min i}, \ref{item:Sphere not Min ii}, and \ref{item:Sphere not Min iii}, $m_0 = 0$ and $\tilde{\nu} = \nu$.

We now need to show a contradiction in all four cases, by showing that $R$ cannot be an isolated point, and proving our claim.

\vspace{1ex}
\underline{Case} \ref{item:Sphere not Min i}: Since adding a constant to the external field does not change the equilibrium measure, we may assume, without loss of generality, that there is a constant $C$ such that $V(x) \geq C$ for all $x \in \mathbb{R}^d$, and $K_{s}(x-y) + 2C \geq 0$ for $x, y \in \supp(\mueq)$. We then have that
$$ \infty > I_{s,V}(\mueq)  \geq 
m^2 ( I_{s}(\sigma_R) +2C) .$$
However, Theorem \ref{thm:Dim of support for Capacity} tells us that $I_s(\sigma_R) = \infty$, which is a contradiction.

\vspace{1ex}
\underline{Case} \ref{item:Sphere not Min ii}: With $p$ as in \eqref{eq:Defintion of p, mod potential, perfect set}, and using \eqref{eq:Derivatives of f_s,d,R,v in sphere} and \eqref{eq:Derivatives_of_f_s_d_R_v_outside_sphere}, we see that for $\lambda \in ( \frac{r_1^2}{R^2}, 1) \cup (1, \frac{r_2^2}{R^2})$,
\begin{align*}
p'( \lambda)  = & \int_{0}^{r_1}  R^2 r^{-s-2} \hsdp{\frac{R^2}{r^2} \lambda}  \rd\nu(r) +  m_R R^{-s} \hsdp{\lambda} \\
&  + \int_{r_2}^{\infty} R^2 r^{-s-2} \hsdp{\lambda \frac{R^2}{r^2}} \rd\nu(r) + R^2 v '(R^2 \lambda)\\
 = & -\frac{1}{2} \int_{0}^{r_1}  R^{-s} \lambda^{-\frac{s}{2}-1} \; \HG21 \Big( \frac{s}{2} + 1, \frac{2 + s -d}{2}; \frac{d}{2}; \frac{r^2}{R^2} \lambda^{-1} \Big) \rd\nu(r)   \\
& + \frac{2+s-d}{2 d} \int_{r_2}^{\infty} R^2 r^{-s-2}  \; \HG21 \Big( \frac{s}{2} + 1, \frac{2 + s -d}{2}+ 1; \frac{d}{2}+ 1;  \frac{R^2}{r^2} \lambda \Big) \rd\nu(r) \\
& +  m_R R^{-s} \hsdp{\lambda}  +  R^2 v '(R^2 \lambda).
\end{align*}
Suppose that $d-2 < s < d-1$. For the first integral, \eqref{eq:positivity of 2F1 on [0,1)} and \eqref{eq:HGderiv} tell us that $ \HG21 \Big( \frac{s}{2} + 1, \frac{2 + s -d}{2}; \frac{d}{2}; t \Big)$ is positive and increasing on $[0,1)$, and combining this with the facts that $\frac{r_1^2}{R^2} < \lambda$ and $r_1 < R$, we have
\begin{align*}
0 & \leq \lim_{\lambda \rightarrow 1}\int_{0}^{r_1}  \lambda^{-\frac{s}{2}-1} \; \HG21 \Big( \frac{s}{2} + 1, \frac{2 + s -d}{2}; \frac{d}{2}; \frac{r^2}{R^2} \lambda^{-1} \Big) \rd\nu(r) \\
& \leq  \lim_{\lambda \rightarrow 1} \nu([0,r_1]) \lambda^{-\frac{s}{2}-1} \; \HG21 \Big( \frac{s}{2} + 1, \frac{2 + s -d}{2}; \frac{d}{2}; \frac{r_1^2}{R^2} \lambda^{-1} \Big)\\
& \leq \HG21 \Big( \frac{s}{2} + 1, \frac{2 + s -d}{2}; \frac{d}{2}; \frac{r_1^2}{R^2} \Big) < \infty.
\end{align*}
For the second integral, \eqref{eq:positivity of 2F1 on [0,1)} and \eqref{eq:HGderiv} tell us that $ \HG21 \Big( \frac{s}{2} + 1, \frac{2 + s -d}{2} +1; \frac{d}{2}+1; t \Big)$ is positive and increasing on $[0,1)$, and since $\lambda < \frac{r_2^2}{R^2}$ and $r_2 > R$, we have
\begin{align*}
 0 & \leq \lim_{\lambda \rightarrow 1} \int_{r_2}^{\infty} r^{-s-2}  \; \HG21 \Big( \frac{s}{2} + 1, \frac{2 + s -d}{2}+ 1; \frac{d}{2}+ 1;  \frac{R^2}{r^2} \lambda \Big) \rd\nu(r)\\
& \leq \lim_{\lambda \rightarrow 1} \nu([r_2, \infty)) r_2^{-s-2}  \; \HG21 \Big( \frac{s}{2} + 1, \frac{2 + s -d}{2}+ 1; \frac{d}{2}+ 1;  \frac{R^2}{r_2^2} \lambda \Big)\\
& \leq  r_2^{-s-2}  \; \HG21 \Big( \frac{s}{2} + 1, \frac{2 + s -d}{2}+ 1; \frac{d}{2}+ 1;  \frac{R^2}{r_2^2} \Big) < \infty.
\end{align*}
Thus, the limits of the two integrals, as $\lambda \rightarrow 1$, are finite. 

From \eqref{eq:Derivatives of f_s,d,R,v in sphere} and \eqref{eq:HG1neg}, we see that  $\lim\limits_{\lambda \rightarrow 1^{+}} \hsdp{\lambda} = - \infty$ and $\lim\limits_{\lambda \rightarrow 1^{-}} \hsdp{\lambda} = \infty$. But we know that $p$ must achieve its minimum at $1$, so we must have that $\lim\limits_{\lambda \rightarrow 1^{+}} p'(\lambda) \geq 0$ and $\lim\limits_{\lambda \rightarrow 1^{-}} p'(\lambda) \leq 0$. This, in turn, then means that $\lim\limits_{\lambda \rightarrow 1^{+}} v'(R^2 \lambda) = \infty$ and $ \lim\limits_{\lambda \rightarrow 1^{-}} v'(R^2 \lambda) = -\infty$, but then $v$ is not continuously differentiable at $1$, a contradiction.

In the case $s = d-2$, we see that
\begin{align*}
p'(\lambda) & = -\frac{1}{2} R^{-s} \int_{0}^{r_1} \lambda^{-\frac{s}{2}-1} \rd\nu(r) + R^{-s} \hsdp{\lambda} + R^2 v'(R^2 \lambda) \\
& = -\frac{1}{2} R^{-s} \nu([0,r_1]) \lambda^{-\frac{s}{2}-1} + R^{-s} \hsdp{\lambda} + R^2 v'(R^2 \lambda).
\end{align*}
Since $p$ must achieve its minimum at $1$, we must have $\lim\limits_{\lambda \rightarrow 1^+} p'(\lambda) \geq 0$ and $\lim\limits_{\lambda \rightarrow 1^-} p'(\lambda) \leq 0$. But then we must have (using \eqref{eq:Derivatives_of_f_s_d_R_v_outside_sphere}) 
$$ \lim_{\lambda \rightarrow 1^+} R^2 v'(R^2 \lambda) \geq \frac{1}{2} R^{-s} \big( m + \nu([0,r_1]) \big)$$
and, due to \eqref{eq:Derivatives of f_s,d,R,v in sphere},
$$ \lim_{\lambda \rightarrow 1^-} R^2 v'(R^2 \lambda) \leq \frac{1}{2} R^{-s}  \nu([0,r_1]).$$
But then $v$ is not continuously differentiable at $1$, giving us a contradiction.

\vspace{1ex}
\underline{Cases} \ref{item:Sphere not Min iii} and \ref{item:Sphere not Min iv}:
Again with $p$ as in \eqref{eq:Defintion of p, mod potential, perfect set} and using  \eqref{eq:Derivatives_of_f_s_d_R_v_outside_sphere} and \eqref{eq:Derivatives of f_s,d,R,v in sphere}, we find that for $\lambda \in ( \frac{r_1^2}{R^2}, 1) \cup (1, \frac{r_2^2}{R^2})$,
\begin{align*}
  p''( \lambda)  = & \frac{m_0}{4} R^{-s} (s+2) \lambda^{- \frac{s}{2}-2} +
  \int_{0}^{r_1}  R^4 r^{-s-4} \hsdpp{\frac{R^2}{r^2} \lambda} \rd\tilde{\nu}(r) \\
  &  +  m_R R^{-s} \hsdpp{\lambda} + \int_{r_2}^{\infty} R^4 r^{-s-4} \hsdpp{\lambda \frac{R^2}{r^2}} \rd\nu(r) + R^4 v ''(R^2 \lambda)\\
 = & \frac{m_0}{4} R^{-s} (s+2) \lambda^{- \frac{s}{2}-2}+  \frac{s+2}{4} \int_{0}^{r_1}  R^{-s} \lambda^{-\frac{s}{2}-2} \; \HG21 \Big( \frac{s}{2} + 2, \frac{2 + s -d}{2}; \frac{d}{2}; \frac{r^2}{R^2} \lambda^{-1} \Big) \rd\tilde{\nu}(r)  \\
&  +  \frac{(2+s-d)(4+s-d)(s+2)}{4 d (d+2)} \int_{r_2}^{\infty} R^4 r^{-s-4}  \; \HG21 \Big( \frac{s}{2} + 2, \frac{2 + s -d}{2}+ 2; \frac{d}{2}+ 2;  \frac{R^2}{r^2} \lambda \Big) \rd\nu(r)\\
& + m_R R^{-s} \hsdpp{\lambda}  +  R^4 v ''(R^2 \lambda).
\end{align*}
For the first integral, from $0$ to $r_1$, by \eqref{eq:positivity of 2F1 on [0,1)} and \eqref{eq:HGderiv}, $ \HG21 \Big( \frac{s}{2} + 2, \frac{2 + s -d}{2}; \frac{d}{2}; t \Big)$ is decreasing on $[0,1)$, so it achieves its maximum at $t = 0$, and since $\frac{r_1^2}{R^2} < \lambda$ and $r_1 < R$, we have
\begin{align*}
\tilde{\nu}([0,r_1]) & \; \HG21 \Big( \frac{s}{2} + 2, \frac{2 + s -d}{2}; \frac{d}{2}; \frac{r_1^2}{R^2} \Big) \\ & =  \lim_{\lambda \rightarrow 1} \tilde{\nu}([0,r_1]) \lambda^{-\frac{s}{2}-2} \; \HG21 \Big( \frac{s}{2} + 2, \frac{2 + s -d}{2}; \frac{d}{2}; \frac{r_1^2}{R^2} \lambda^{-1} \Big)\\ 
& \leq \lim_{\lambda \rightarrow 1}\int_{0}^{r_1}  \lambda^{-\frac{s}{2}-2} \; \HG21 \Big( \frac{s}{2} + 2, \frac{2 + s -d}{2}; \frac{d}{2}; \frac{r^2}{R^2} \lambda^{-1} \Big) \rd\tilde{\nu}(r) \\
& \leq  \lim_{\lambda \rightarrow 1} \tilde{\nu}([0,r_1]) \lambda^{-\frac{s}{2}-2} \\
& = \tilde{\nu}([0,r_1]).
\end{align*}
For the second integral, from $r_2$ to $\infty$, \eqref{eq:positivity of 2F1 on [0,1)} and \eqref{eq:HGderiv} tell us that $ \HG21 \Big( \frac{s}{2} + 2, \frac{2 + s -d}{2} +2; \frac{d}{2}+2; t \Big)$ is positive and increasing on $[0,1)$, and since $\lambda < \frac{r_2^2}{R^2}$ and $r_2 > R$, we have
\begin{align*}
 0 & \leq \lim_{\lambda \rightarrow 1} \int_{r_2}^{\infty} r^{-s-4}  \; \HG21 \Big( \frac{s}{2} + 2, \frac{2 + s -d}{2}+ 2; \frac{d}{2}+ 2;  \frac{R^2}{r^2} \lambda \Big) \rd\nu(r)\\
& \leq \lim_{\lambda \rightarrow 1} \nu([r_2, \infty)) r_2^{-s-4}  \; \HG21 \Big( \frac{s}{2} + 2, \frac{2 + s -d}{2}+ 2; \frac{d}{2}+ 2;  \frac{R^2}{r_2^2} \lambda \Big)\\
& \leq  r_2^{-s-4}  \; \HG21 \Big( \frac{s}{2} + 2, \frac{2 + s -d}{2}+ 2; \frac{d}{2}+ 2;  \frac{R^2}{r_2^2} \Big) < \infty.
\end{align*}
Thus, the limits as $\lambda \rightarrow 1$ of $\frac{m_0}{4}R^{-s} (s+2) \lambda^{-\frac{s}{2}-2}$ and the two integrals, from $0$ to $r_1$ and $r_2$ to $\infty$, are finite.

From \eqref{eq:Derivatives of f_s,d,R,v in sphere} and \eqref{eq:HG1neg} (or \eqref{eq:HG1zer} in the case $s = d-3$), we see that $\lim\limits_{\lambda \rightarrow 1^{-}} \hsdpp{\lambda} = -\infty$. But we know that $p$ must achieve its minimum at $1$, so we must have that $\lim\limits_{\lambda \rightarrow 1^{-}} p''(\lambda) \geq 0$, so  $\lim\limits_{\lambda \rightarrow 1^{-}} v''(R^2 \lambda) = \infty$. But this contradicts $v''$ being continuous and finite on $(0, \infty)$.
\end{proof}

\subsection{Proof of Theorem~\ref{thm:Sufficient Conditions Riesz} and more about sufficient conditions}
\label{subsec:sufficient conditions}

\begin{proof}[Proof of Theorem~\ref{thm:Sufficient Conditions Riesz}]
The result follows immediately from Corollaries \ref{cor:Convex_external_fields_-2_s_d-4_inside_sphere} and \ref{cor:Convex_external_fields_-2_s_d-4_outside_sphere} below, and the Frostman conditions and uniqueness of the equilibrium measure (see the discussion at the start of Section \ref{sec:Proofs}).  
\end{proof}

    For the rest of this section, we provide a useful tool to verify that the function $f:= f_{s,d,R,v}$ from \eqref{eq:Modified_potential_f_expression} indeed achieves its global minimum at $1$, i.e. the uniform measure $\sigma_R$ on the sphere of radius $R$ is the equilibrium measure for $I_{s,V}$. 
    Roughly speaking, when the necessary conditions of Theorem \ref{thm:Necessary Condition Riesz} are satisfied,
    in particular the endpoint behavior at $0$ (see part \ref{item:NecCond BC0}), and $f$ is increasing then decreasing on $[0,1]$
    we deduce that $1$ is the minimizer on $[0, 1]$.
    Similarly on the interval $[1, \infty)$, taking into account Theorem \ref{thm:Necessary Condition Riesz}\ref{item:NecCond BCinf}, if $f$ is increasing then decreasing
    we deduce that $1$ is the minimizer on $[1,\infty)$.
    (This includes the straightforward case when $f$ is decreasing on $[0, 1]$ and increasing on $[1, \infty)$).

    On each of the intervals $[0, 1]$ and $[1, \infty)$
    such functions are unimodal, a term originating in probability and statistics
    (see, for example, \cite{And55,Mud66}).
    More precisely, for an interval $I\subset \mathbb{R}$, a function $\varphi:I\to \mathbb{R}\cup\{ \pm \infty \}$ is \emph{unimodal} on $I$ if there is a point $\xi\in I$ such that $\varphi$ is increasing on $(-\infty,\xi]\cap I$ and decreasing on $[\xi,\infty)\cap I$.
    Note that the degenerate cases when $\xi$ is an end-point of $I$ are treated as unimodal.

    Showing the unimodality of a function is not always easy.
    In Proposition \ref{prop:half-monotone implies unimodal} below, we  give a useful condition to verify unimodality in the proofs of Theorem \ref{thm:Sphere Min} and Theorem~\ref{thm:Sufficient Conditions Riesz}.
    For convenience, we first introduce the following term: for an interval $I\subset \mathbb{R}$ and integers $k_0$, $k$ with $k \geq k_0\geq 1$, a function $\varphi:I\to \mathbb{R}\cup\{ \pm \infty \}$ is called (negatively) \textbf{half-monotone} of order $(k_0,k)$ at a point $a\in I$ if the following conditions hold:
		\begin{enumerate}
			\item $\varphi\in C^k(I)$ in the extended sense and $(-1)^k\varphi^{(k)}\leq 0$ on $I$;
			\item $(-1)^{\ell}\varphi^{(\ell)}(a)\geq 0$ for all $1\leq \ell < k_0$;
			\item $(-1)^{\ell}\varphi^{(\ell)}(a)\leq 0$ for all $k_0 \leq \ell\leq k$.
		\end{enumerate}
		In addition, $\varphi$ is called \textbf{strictly half-monotone} of order $(k_0,k)$ at $a$ if $\varphi^{(k_0-1)}(a)\neq 0$. 
  We assume, without loss of generality, that $k_0$ is the smallest integer such that these properties hold.

        \begin{proposition}[From half-monotone to unimodal]\label{prop:half-monotone implies unimodal}
		Let $k_0$, $k$ be integers with $k \geq k_0\geq 1$. If $\varphi:[0,1]\to \mathbb{R}\cup\{ \pm \infty \}$ is a half-monotone function of order $(k_0,k)$ at $1$, then it is unimodal on $[0,1]$. 
  If, in addition, $k_0=1$, then $\varphi$ is increasing on $[0,1]$. Otherwise, when $\varphi$ is strictly half-monotone at $1$ with $k_0\geq 2$, then $\varphi$ is not increasing on the whole interval $[0,1]$.
	\end{proposition}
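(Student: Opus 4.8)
The plan is to work throughout with the extended-sense derivatives $P_\ell := (-1)^\ell\varphi^{(\ell)}$ for $0\le\ell\le k$, which obey the recursion $P_\ell'=-P_{\ell+1}$ on $[0,1]$. In this notation the hypotheses become: $P_k\le 0$ on $[0,1]$; $P_\ell(1)\ge 0$ for $1\le\ell<k_0$; and $P_\ell(1)\le 0$ for $k_0\le\ell\le k$. Since $\varphi'=-P_1$, unimodality of $\varphi$ on $[0,1]$ is exactly the statement that $P_1$ has at most one sign change on $[0,1]$, passing from $\le 0$ to $\ge 0$ as $\lambda$ increases, while $\varphi$ being increasing is the statement $P_1\le 0$ on $[0,1]$. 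So the whole argument reduces to controlling the sign pattern of $P_1$.

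First I would propagate the information at level $k$ downward to level $k_0$: by downward induction on $\ell$ from $k$ to $k_0$, I claim $P_\ell\le 0$ on all of $[0,1]$. The base case $\ell=k$ is hypothesis (1). For the step, $P_{\ell+1}\le 0$ gives $P_\ell'=-P_{\ell+1}\ge 0$, so $P_\ell$ is nondecreasing on $[0,1]$; since $P_\ell(1)\le 0$ (hypothesis (3), valid because $k_0\le\ell\le k$), we conclude $P_\ell\le 0$ on $[0,1]$. In particular $P_{k_0}\le 0$ on $[0,1]$, hence $P_{k_0-1}'=-P_{k_0}\ge 0$ and $P_{k_0-1}$ is nondecreasing on $[0,1]$. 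If $k_0=1$, the case $\ell=1$ of the claim already gives $P_1\le 0$ on $[0,1]$, i.e.\ $\varphi'\ge 0$, so $\varphi$ is increasing on $[0,1]$ and hence unimodal (with $\xi=1$); this settles the $k_0=1$ assertion.

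Assume now $k_0\ge 2$. I would then prove, by induction on $\ell$ running downward from $k_0-1$ to $1$, the claim $C(\ell)$: there is $c_\ell\in[0,1]$ with $P_\ell\le 0$ on $[0,c_\ell]$ and $P_\ell\ge 0$ on $[c_\ell,1]$ (degenerate cases $c_\ell\in\{0,1\}$ allowed, as in the definition of unimodality). The base case $C(k_0-1)$ holds because $P_{k_0-1}$ is nondecreasing and $P_{k_0-1}(1)\ge 0$ (hypothesis (2) at $\ell=k_0-1$, using $1\le k_0-1<k_0$), so its unique possible sign change is from $\le 0$ to $\ge 0$; take $c_{k_0-1}$ to be the supremum of the set on which $P_{k_0-1}\le 0$. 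For the step from $C(\ell)$ to $C(\ell-1)$, with $\ell-1\ge 1$: from $C(\ell)$ and $P_{\ell-1}'=-P_\ell$ we get that $P_{\ell-1}$ is nondecreasing on $[0,c_\ell]$ and nonincreasing on $[c_\ell,1]$; since $P_{\ell-1}(1)\ge 0$ (hypothesis (2), valid as $1\le\ell-1<k_0$), nonincreasingness on $[c_\ell,1]$ forces $P_{\ell-1}\ge 0$ there, hence $P_{\ell-1}(c_\ell)\ge 0$, and then on $[0,c_\ell]$ the nondecreasing function $P_{\ell-1}$ ends at a nonnegative value, so its only possible sign change is from $\le 0$ to $\ge 0$. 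This yields $C(\ell-1)$ with a suitable $c_{\ell-1}\le c_\ell$. Applying $C(1)$ and setting $\xi:=c_1$ shows $\varphi'=-P_1\ge 0$ on $[0,\xi]$ and $\le 0$ on $[\xi,1]$, i.e.\ $\varphi$ is unimodal on $[0,1]$.

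For the final assertion, suppose in addition $\varphi$ is strictly half-monotone at $1$ with $k_0\ge 2$, so $\varphi^{(k_0-1)}(1)\ne 0$; together with hypothesis (2) this gives $P_{k_0-1}(1)>0$, and by extended-sense continuity $P_{k_0-1}>0$ on some left neighbourhood $(1-\delta,1)$ of $1$. A short downward induction, using $P_\ell'=-P_{\ell+1}<0$ on $(1-\delta,1)$ together with $P_\ell(1)\ge 0$ (hypothesis (2), for $1\le\ell<k_0$), shows $P_\ell>0$ on $(1-\delta,1)$ for every $1\le\ell\le k_0-1$; in particular $\varphi'=-P_1<0$ on $(1-\delta,1)$, so $\varphi$ is strictly decreasing there and hence not increasing on the whole of $[0,1]$. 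The step I expect to require the most care is the bookkeeping: the sign convention for $P_\ell(1)$ flips at the threshold $\ell=k_0$, so one must keep track of which of hypotheses (2), (3) applies at each level and in which direction each induction runs. A secondary technical point is justifying the monotonicity and sign-change statements in the extended sense, where $\varphi^{(\ell)}$ may equal $\pm\infty$ at finitely many points; this is routine given that $\varphi\in C^k([0,1])$ in the extended sense, since a function that is continuous in the extended sense with a nonnegative derivative off a finite set is still nondecreasing.
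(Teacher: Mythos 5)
Your proposal is correct and takes essentially the same route as the paper: the same downward induction from $k$ to $k_0$ showing $(-1)^{\ell}\varphi^{(\ell)}\leq 0$ on $[0,1]$ (which settles $k_0=1$), followed by a second downward induction from $k_0-1$ down to $1$ tracking a single sign change of each $(-1)^{\ell}\varphi^{(\ell)}$, exactly as in the paper's $\lambda_\ell$ bookkeeping. The only differences are cosmetic: you run the second induction with non-strict signs (so you never need the paper's WLOG minimality of $k_0$ to reduce to the strictly half-monotone case), and you establish the final ``not increasing'' assertion by an explicit local argument near $1$, which in the paper is implicit in the strict negativity of $\varphi'$ on $(\lambda_1,1)$.
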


\begin{proof}
We have $(-1)^k\varphi^{(k)}\leq 0$ on $[0,1]$. Suppose for some $\ell \in [k_0, k) \cap \mathbb{N}$, we know that $(-1)^{\ell+1} \varphi^{(\ell+1)}$ is nonpositive on $[0,1]$. Then $(-1)^{\ell} \varphi^{(\ell)}$ is an increasing function on $[0,1]$. Since $(-1)^{\ell} \varphi^{(\ell)}(1) \leq 0$, it follows that $(-1)^{\ell} \varphi^{(\ell)}\leq 0$ on $[0,1]$. Inductively, we see that $(-1)^{k_0} \varphi^{(k_0)}\leq 0$ on $[0,1]$, and so $(-1)^{k_0-1} \varphi^{(k_0-1)}$ is an increasing function (which is also true if $k = k_0$). When $k_0=1$, we obtain the claim.

For the rest of the proof, we only consider $k_0\geq 2$, in which case $\varphi$ is strictly half-monotone. Since $(-1)^{k_0-1} \varphi^{(k_0-1)}(1) > 0$, there must be some $\lambda_{k_0-1} \in [0,1)$ such that $(-1)^{k_0-1} \varphi^{(k_0-1)}$ is nonpositive on $[0, \lambda_{k_0 -1})$ and positive on $(\lambda_{k_0 -1},1]$, where the former interval may be empty.

		Suppose that for some $\ell \in [1, k_0-1) \cap \mathbb{N}$, there exists some $\lambda_{\ell+1} \in [0,1)$ such that $(-1)^{\ell+1} \varphi^{(\ell + 1)}$ is nonpositive on $[0, \lambda_{\ell+1})$ and positive on $(\lambda_{\ell+1},1)$, and therefore $(-1)^{\ell} \varphi^{(\ell)}$ is unimodal on $[0,1]$, and strictly decreasing on $(\lambda_{\ell+1},1]$. Since $(-1)^{\ell}\varphi^{(\ell)}(1) \geq 0$, there exists some $\lambda_{\ell} \in [0,\lambda_{\ell + 1}]$ such that $(-1)^{\ell} \varphi^{(\ell)}$ is nonpositive on $[0, \lambda_{\ell})$ and positive on $(\lambda_{\ell},1)$. Thus, by induction there exists some $\lambda_1 \in [0,1)$ such that $\varphi'$ is nonnegative on $[0, \lambda_1)$ and negative  on $(\lambda_1, 1)$, so $\varphi$ is unimodal on $[0,1]$. This completes the proof.
	\end{proof}

    In Corollary~\ref{cor:Convex_external_fields_-2_s_d-4_inside_sphere}
        and Lemma~\ref{lem:General Inside the Sphere}
        we provide sufficient conditions for $1$ to be the global minimizer of $f$ on $[0,1]$ (i.e. inside the sphere, $\|x\| \leq R$).
        In particular, Corollary~\ref{cor:Convex_external_fields_-2_s_d-4_inside_sphere} implies that convexity of $v$,
        together with
        Theorem~\ref{thm:Necessary Condition Riesz}\ref{item:NecCond SP}, is sufficient.

\begin{corollary}\label{cor:Convex_external_fields_-2_s_d-4_inside_sphere}
    Suppose $-2 < s \leq d-4$ and $v$ is $\mathcal{C}^2$ in the extended sense on $[0, R^2]$, 
    where $R > 0$ satisfies
    Theorem~\ref{thm:Necessary Condition Riesz}\ref{item:NecCond SP}. If $v''$ is nonnegative on $[0, R^2]$, then the modified potential $f$
    defined in \eqref{eq:Modified_potential_f_expression}
    achieves its minimum on $[0,1]$ at $1$.
    
\end{corollary}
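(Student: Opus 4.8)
The plan is to show directly that the modified potential $f=f_{s,d,R,v}$ of \eqref{eq:Modified_potential_f_expression} is convex and nonincreasing on $[0,1]$. Since $f(\|x\|^2/R^2)=U^{\sigma_R}_s(x)+V(x)$ and $f$ is $\mathcal C^2$ in the extended sense on $[0,1]$ — because $-2<s\le d-4$ forces $s<d-3$, so $\ell=2$ lies in the range $\ell<d-s-1$ where the derivatives of $h_{s,d}$ have limits at $1$ — such a function attains its minimum over $[0,1]$ at the right endpoint $\lambda=1$, which is the assertion. So I would split the argument into a critical-point identity at $\lambda=1$ and a one-sided convexity estimate.

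First I would record that $f'(1)=0$. Writing $h:=h_{s,d}$ (resp.\ $h_{0,d}$ when $s=0$), the computation carried out in the proof of Theorem~\ref{thm:Necessary Condition Riesz} gives $h'(1)=-c_{s,d}/4$; on the other hand the hypothesis that $R$ satisfies Theorem~\ref{thm:Necessary Condition Riesz}\ref{item:NecCond SP}, i.e.\ $R^{s+2}v'(R^2)=c_{s,d}/4$, rearranges (using $c_{0,d}=1$ in the case $s=0$) to $R^2v'(R^2)=\tfrac{c_{s,d}}{4}R^{-s}$. Differentiating \eqref{eq:Modified_potential_f_expression} then yields $f'(1)=R^{-s}h'(1)+R^2v'(R^2)=0$.

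Next I would show $f''\ge 0$ on $[0,1)$. Applying \eqref{eq:Derivatives of f_s,d,R,v in sphere} with $\ell=2$,
\begin{equation*}
 f''(\lambda)=R^{-s}\,\frac{(2+s-d)(4+s-d)(s+2)}{4d(d+2)}\;\HG21\!\Big(\tfrac{s}{2}+2,\tfrac{2+s-d}{2}+2;\tfrac{d}{2}+2;\lambda\Big)+R^{4}\,v''(R^2\lambda),\qquad\lambda\in[0,1).
\end{equation*}
In the prefactor of the hypergeometric term, $2+s-d<0$ since $s<d-2$, $4+s-d\le 0$ since $s\le d-4$, $s+2>0$ since $s>-2$, and $4d(d+2)>0$, so the prefactor is $\ge 0$; the hypergeometric factor is positive on $[0,1)$ by \eqref{eq:positivity of 2F1 on [0,1)}; and $v''(R^2\lambda)\ge 0$ on $[0,1]$ by hypothesis. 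Hence $f''\ge 0$ on $[0,1)$, so, applying the mean value theorem on the subintervals between the finitely many exceptional points and using extended-sense continuity of $f'$, the derivative $f'$ is nondecreasing on $[0,1]$. Together with $f'(1)=0$ this forces $f'\le 0$ on $[0,1]$, whence $f$ is nonincreasing on $[0,1]$ and $f\ge f(1)$ there, completing the argument.

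I do not foresee a genuine obstacle here: morally this is just ``a convex function with a critical point at its right-hand endpoint is nonincreasing''. The only care needed is bookkeeping with the parameter constraints — it is precisely $-2<s\le d-4$ that makes the second-derivative prefactor nonnegative and keeps $\ell=2$ within $\ell<d-s-1$, so that $f$ is $\mathcal C^2$ in the extended sense on the \emph{closed} interval and the statements at $\lambda=1$ are legitimate — together with a routine check that infinities of $v''$ at finitely many points do no harm (at such points $f=+\infty>f(1)$ anyway). Alternatively, one can phrase Steps~2--3 as applying Proposition~\ref{prop:half-monotone implies unimodal} to $-f$, which is half-monotone of order $(1,2)$ at $1$ by the same sign computation.
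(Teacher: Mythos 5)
Your proof is correct and is essentially the paper's own argument: nonnegativity of $f''$ on $[0,1]$ from the sign of the $\ell=2$ term in \eqref{eq:Derivatives of f_s,d,R,v in sphere} together with $v''\ge 0$ (note that for $s\le d-6$ the lower parameter $\tfrac{6+s-d}{2}$ is nonpositive, so positivity of the $\HG21$ factor uses the symmetry $a\leftrightarrow b$ before invoking \eqref{eq:positivity of 2F1 on [0,1)}, exactly as the paper's citation implicitly does), combined with $f'(1)=0$ from Theorem~\ref{thm:Necessary Condition Riesz}\ref{item:NecCond SP}, so the convex function $f$ attains its minimum on $[0,1]$ at $1$. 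The only blemish is the parenthetical claim that $f=+\infty$ wherever $v''$ blows up --- false in general (e.g.\ $v(\rho)=\rho^{3/2}$ at $\rho=0$) --- but it is harmless, since your main argument (extended-sense continuity of $f'$ plus $f''\ge0$ between the finitely many exceptional points) already yields that $f'$ is nondecreasing.
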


\begin{proof}
	From  \eqref{eq:Derivatives of f_s,d,R,v in sphere}, \eqref{eq:positivity of 2F1 on [0,1)}, and \eqref{eq:HG1pos}, we have $f'' \geq 0$ on $[0,1]$. Since $f'(1)=0$, 
 the global minimum of the convex function $f$ on
 the convex set $[0, 1]$ is attained at $1$.
 (See also Proposition \ref{prop:half-monotone implies unimodal}
 which implies $f$ is decreasing on $[0, 1]$.)
\end{proof}

\begin{lemma}\label{lem:General Inside the Sphere}
Suppose $d-4 < s < d-3$, 
and for some $k > 2$, $v$ is $\mathcal{C}^k$ in the extended sense on $[0, R^2]$, where $R > 0$ satisfies
Theorem~\ref{thm:Necessary Condition Riesz}\ref{item:NecCond SP}.
If in addition,
Theorem~\ref{thm:Necessary Condition Riesz}\ref{item:NecCond DD} holds,
$v^{(k)}(R^2 \lambda) \leq 0$ for $\lambda \in [0,1]$, and,
for $3 \leq \ell \leq k-1$,
$f^{(\ell)}(0) \leq 0$,
that is
\begin{equation}\label{eq:d-4_s_d-3_inside_sphere_negative_derivatives_at_0}
    R^{s +2 \ell}v^{(\ell)}(0) \leq -  h_{s,d}^{(\ell)}(0),
\end{equation}
then the modified potential $f$
    defined in \eqref{eq:Modified_potential_f_expression} achieves its minimum on $[0,1]$ at $1$.
\end{lemma}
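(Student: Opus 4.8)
The plan is to prove the stronger statement that $f:=f_{s,d,R,v}$ from \eqref{eq:Modified_potential_f_expression} is non-increasing on all of $[0,1]$, which immediately gives the claimed minimum at $\lambda=1$. Note first that $d-4<s<d-3$ cannot hold with $s=0$ (that would require $3<d<4$), so only the $s\neq 0$ branch of \eqref{eq:Modified_potential_f_expression} is relevant. The preparatory step is to fix the signs of the derivatives of the kernel term $h_{s,d}$. From the closed form \eqref{eq:Derivatives of f_s,d,R,v in sphere}, for every $\ell\geq 1$ the hypergeometric factor $\HG21\big(\tfrac{s}{2}+\ell,\tfrac{2+s-d}{2}+\ell;\tfrac{d}{2}+\ell;\lambda\big)$ has all three parameters positive when $-2<s$ and $s>d-4$, hence is positive on $[0,1)$ by \eqref{eq:positivity of 2F1 on [0,1)}, while its rational prefactor $\tfrac{2+s-d}{2^{\ell}d}\prod_{j=1}^{\ell-1}\tfrac{(2+2j+s-d)(s+2j)}{d+2j}$ is strictly negative for $d-4<s<d-3$ (there $2+s-d<0$ and every factor of the product is positive). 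Therefore $h_{s,d}^{(\ell)}<0$ on $[0,1)$ for all $\ell\geq 1$; at $\lambda=1$ the limits $h_{s,d}'(1)=-c_{s,d}/4$ and $h_{s,d}''(1)=\tfrac{(s+2)(d-s-4)c_{s,d}}{16(d-s-3)}$ are finite (since $d-s-1\in(2,3)$), whereas $h_{s,d}^{(\ell)}(1)=-\infty$ for $\ell\geq 3$.

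The core is a descending chain of monotonicity arguments, in the spirit of Proposition~\ref{prop:half-monotone implies unimodal} but adapted to the blow-up at $1$. From $v^{(k)}(R^2\lambda)\leq 0$ on $[0,1]$ and $h_{s,d}^{(k)}<0$ on $[0,1)$ we obtain $f^{(k)}<0$ on $[0,1)$, so $f^{(k-1)}$ is strictly decreasing there. When $k\geq 4$, hypothesis \eqref{eq:d-4_s_d-3_inside_sphere_negative_derivatives_at_0} for $\ell=k-1$ gives $f^{(k-1)}(0)\leq 0$, hence $f^{(k-1)}\leq 0$ on $[0,1)$; iterating with \eqref{eq:d-4_s_d-3_inside_sphere_negative_derivatives_at_0} for $\ell=k-1,k-2,\dots,3$ yields $f^{(\ell)}\leq 0$ on $[0,1)$ for all $3\leq\ell\leq k-1$ (for $k=3$ this is vacuous, and $f^{(3)}<0$ on $[0,1)$ is already the base case). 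In particular $f^{(3)}\leq 0$ on $[0,1)$, so $f''$ is non-increasing on $[0,1)$; since $\lim_{\lambda\to 1^-}f''(\lambda)=f''(1)=R^{-s}h_{s,d}''(1)+R^4v''(R^2)\geq 0$ by Theorem~\ref{thm:Necessary Condition Riesz}\ref{item:NecCond DD} (using $v'(R^2)=\tfrac{c_{s,d}}{4}R^{-s-2}>0$ from \ref{item:NecCond SP} together with the value of $h_{s,d}''(1)$ above), a non-increasing function with left limit $\geq 0$ at $1$ must satisfy $f''\geq 0$ on $[0,1]$. Then $f'$ is non-decreasing on $[0,1]$, and $f'(1)=R^{-s}h_{s,d}'(1)+R^2v'(R^2)=0$ by \ref{item:NecCond SP}, so $f'\leq 0$ on $[0,1]$; hence $f$ is non-increasing on $[0,1]$ and attains its minimum at $\lambda=1$.

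The one genuinely delicate point — which also explains why the hypotheses constrain $f^{(\ell)}$ at $0$ for $3\leq\ell\leq k-1$ but at $1$ only for $\ell=1,2$ — is the asymmetry of the usable endpoint data: because $h_{s,d}^{(\ell)}(1)=-\infty$ for $\ell\geq 3$, no sign condition can be imposed at $1$ on the high-order derivatives, so the chain must be driven downward from $\ell=k$ using the values at $0$, switching to the Frostman-derived conditions \ref{item:NecCond DD} and \ref{item:NecCond SP} only once it reaches $f''$ and $f'$, which do have finite limits at $1$. The rest — the explicit sign of the prefactor in \eqref{eq:Derivatives of f_s,d,R,v in sphere}, the identities $h_{s,d}'(1)=-c_{s,d}/4$ and $h_{s,d}''(1)=\tfrac{(s+2)(d-s-4)c_{s,d}}{16(d-s-3)}$ recorded in Section~\ref{subsec:Necessary Conditions}, and verifying that the sums $R^{-s}h_{s,d}^{(\ell)}+R^{2\ell}v^{(\ell)}$ never give an indeterminate $\infty-\infty$ (ruled out since $v^{(k)}\leq 0$ excludes $+\infty$ and $h_{s,d}^{(\ell)}$ is finite on $[0,1)$) — is routine.
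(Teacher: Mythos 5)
Your proposal is correct and follows essentially the same route as the paper: show $f^{(k)}\leq 0$ on $[0,1]$, propagate the sign downward using the hypotheses $f^{(\ell)}(0)\leq 0$ for $3\leq\ell\leq k-1$ to conclude $f''$ is decreasing, then invoke Theorem~\ref{thm:Necessary Condition Riesz}\ref{item:NecCond DD} to get $f''\geq 0$ and \ref{item:NecCond SP} (i.e.\ $f'(1)=0$) to get $f$ decreasing on $[0,1]$. The only difference is cosmetic: you unroll the downward induction by hand, whereas the paper packages it as Proposition~\ref{prop:half-monotone implies unimodal} applied to $\varphi(\lambda)=f''(1-\lambda)$, half-monotone of order $(1,k-2)$ at $1$.
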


\begin{proof}
Since $d-4 < s < d-3$,  we see that $\lim_{\lambda \rightarrow 1^-} f^{(k)}(\lambda) = - \infty$  due to \eqref{eq:Derivatives of f_s,d,R,v in sphere}, \eqref{eq:HG1neg}, and the fact that $v^{(k)}$ is nonpositive at $1$. Combining this with \eqref{eq:Derivatives of f_s,d,R,v in sphere}, \eqref{eq:positivity of 2F1 on [0,1)}, and our assumption that $v^{(k)}$ is nonpositive on $[0,1)$, we have that $f^{(k)} \leq 0$ on $[0,1]$.
Let $\varphi(\lambda):=f''(1-\lambda)$. Then, $\varphi$ is half-monotone of order $(1,k-2)$ at $1$. Proposition \ref{prop:half-monotone implies unimodal} implies that $\varphi$ is increasing, or $f''$ is decreasing on $[0,1]$.
By Theorem~\ref{thm:Necessary Condition Riesz}\ref{item:NecCond DD}, $f''(1) \geq 0$, so $f''$ is nonnegative on $[0,1]$.
Consequently, since $f'(1) = 0$ from 
Theorem~\ref{thm:Necessary Condition Riesz}\ref{item:NecCond SP},
it follows that $f$ is decreasing on $[0,1]$, so the minimum of $f$ on $[0, 1]$ occurs at $1$.
\end{proof}

In Lemma \ref{lem:General Outside the Sphere} and
Corollary~\ref{cor:Convex_external_fields_-2_s_d-4_outside_sphere}, we provide
sufficient conditions for $1$ to be the global minimizer of $f$ on $[1, \infty)$ (i.e. outside the sphere, $\|x\| \geq R$).
\begin{lemma}\label{lem:General Outside the Sphere}
Suppose $R > 0$ satisfies
Theorem~\ref{thm:Necessary Condition Riesz}\ref{item:NecCond SP}.
If one of the following sets of conditions holds, then $f$ achieves its infimum on $[1, \infty)$ at $1$.
\begin{enumerate}[label=\textnormal{(\alph*)}]
\item\label{item:General Outside the Sphere case 3} 
$-2 < s < d-4$, $k \in 2\mathbb{N} \cup [2, \frac{d-s}{2})$, $v$ is $\mathcal{C}^k$ in the extended sense on $[R^2, \infty)$ such that $ v^{(k)}(R^2 \lambda) \geq 0$ for $\lambda \in [1, \infty)$, and for $2 \leq \ell < k$, $f^{(\ell)}(1) \geq 0$, that is
     \begin{equation}\label{eq:-2<s<d-4, outside sphere, even derivatives}
        h_{s,d}^{(\ell)}(1) \geq  - R^{2\ell + s} v^{(\ell)}(R^2).
     \end{equation}
\item\label{item:General Outside the Sphere case 4}
$d-4 < s < d-3$, $v$  is $\mathcal{C}^2$ in the extended sense on $[R^2, \infty)$,
Theorem~\ref{thm:Necessary Condition Riesz}\ref{item:NecCond DD}
is satisfied,
and $\lambda\mapsto\lambda^{\frac{s}{2}+2} v''(R^2 \lambda)$ is increasing for $\lambda > 1$.

\end{enumerate}
\end{lemma}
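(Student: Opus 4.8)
The plan is to prove, in both cases, that the modified potential $f=f_{s,d,R,v}$ of \eqref{eq:Modified_potential_f_expression} is nondecreasing on $[1,\infty)$. Since Theorem~\ref{thm:Necessary Condition Riesz}\ref{item:NecCond SP} is assumed, the computation $R^{-s}h_{s,d}'(1)=-c_{s,d}/4$ carried out in the proof of that theorem gives $f'(1)=0$, so monotonicity of $f$ on $[1,\infty)$ yields $f(\lambda)\ge f(1)$ there, which is exactly the assertion. Everything is driven by the explicit derivative formula \eqref{eq:Derivatives_of_f_s_d_R_v_outside_sphere}, which for $\lambda>1$ reads $f^{(\ell)}(\lambda)=R^{-s}h_{s,d}^{(\ell)}(\lambda)+R^{2\ell}v^{(\ell)}(R^2\lambda)$ with $h_{s,d}^{(\ell)}(\lambda)$ equal to $(-1)^{\ell}2^{-\ell}\lambda^{-s/2-\ell}$ times a positive product of factors and the hypergeometric value $\HG21\bigl(\tfrac s2+\ell,\tfrac{2+s-d}{2};\tfrac d2;\lambda^{-1}\bigr)$, together with the positivity statement \eqref{eq:positivity of 2F1 on [0,1)} and the derivative identity \eqref{eq:HGderiv} for $\HG21$.

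For case~\ref{item:General Outside the Sphere case 3} ($-2<s<d-4$), the first step is to establish that $f^{(k)}\ge 0$ on $[1,\infty)$. The admissibility condition $k\in 2\mathbb{N}\cup[2,\tfrac{d-s}{2})$ is precisely what one needs: it controls the sign of $h_{s,d}^{(k)}$ on $(1,\infty)$ — nonnegative, via \eqref{eq:positivity of 2F1 on [0,1)} (and, when $k<(d-s)/2$, via the Euler integral representation of $\HG21$, which applies since then $\tfrac d2>\tfrac s2+k>0$) — and it ensures that the one-sided limits $f^{(\ell)}(1)$ for $2\le \ell\le k$ are meaningful (in the extended sense, possibly $+\infty$), recalling that $f^{(\ell)}$ has a finite limit at $1$ exactly when $\ell<d-s-1$. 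Combined with the hypothesis $v^{(k)}(R^2\lambda)\ge 0$, this gives $f^{(k)}\ge 0$ on $[1,\infty)$. I would then perform a downward induction: if $f^{(\ell)}\ge 0$ on $[1,\infty)$ then $f^{(\ell-1)}$ is nondecreasing there, so for $\ell-1\ge 2$ the hypothesis $f^{(\ell-1)}(1)\ge 0$ (equation \eqref{eq:-2<s<d-4, outside sphere, even derivatives}) promotes this to $f^{(\ell-1)}\ge 0$ on $[1,\infty)$, and for $\ell-1=1$ the relation $f'(1)=0$ gives $f'\ge 0$ on $[1,\infty)$.

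For case~\ref{item:General Outside the Sphere case 4} ($d-4<s<d-3$), $f$ is in general only $\mathcal{C}^2$ near $\lambda=1$, so the bootstrap is unavailable and I would instead produce a single weighted monotonicity. From \eqref{eq:Derivatives_of_f_s_d_R_v_outside_sphere} with $\ell=2$ one has $\lambda^{s/2+2}h_{s,d}''(\lambda)=\tfrac{s+2}{4}\,\HG21\bigl(\tfrac s2+2,\tfrac{2+s-d}{2};\tfrac d2;\lambda^{-1}\bigr)$ for $\lambda>1$; differentiating in $\lambda$ by \eqref{eq:HGderiv} and using $\tfrac{2+s-d}{2}<0$ together with positivity of the contiguous hypergeometric factor via \eqref{eq:positivity of 2F1 on [0,1)} shows that $\lambda\mapsto\lambda^{s/2+2}h_{s,d}''(\lambda)$ is nondecreasing on $(1,\infty)$. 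By hypothesis $\lambda\mapsto\lambda^{s/2+2}v''(R^2\lambda)$ is nondecreasing on $(1,\infty)$ as well, hence so is $\lambda\mapsto\lambda^{s/2+2}f''(\lambda)$ on $[1,\infty)$. Its value at $\lambda=1$ is $f''(1)=R^{-s}h_{s,d}''(1)+R^4v''(R^2)$, and inserting $h_{s,d}''(1)=\tfrac{(s+2)(d-s-4)c_{s,d}}{16(d-s-3)}$ from the proof of Theorem~\ref{thm:Necessary Condition Riesz} and combining Theorem~\ref{thm:Necessary Condition Riesz}\ref{item:NecCond DD} with \ref{item:NecCond SP} (so that the positive factor $R^2v'(R^2)=\tfrac{c_{s,d}}{4}R^{-s}$ may be used to clear the denominator in \ref{item:NecCond DD} without reversing the inequality) gives $f''(1)\ge 0$. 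Hence $\lambda^{s/2+2}f''(\lambda)\ge 0$, so $f''\ge 0$ on $[1,\infty)$, so $f'$ is nondecreasing and, since $f'(1)=0$, $f'\ge 0$ on $[1,\infty)$.

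I expect the main obstacle to be the bookkeeping at the junction $\lambda=1$: determining the exact sign of the relevant $\HG21$-factors (where \eqref{eq:positivity of 2F1 on [0,1)} and the integral-representation/transformation identities of the appendix are indispensable) and tracking which derivatives $f^{(\ell)}$ stay finite there, since this is what singles out the admissible range of $k$ in case~\ref{item:General Outside the Sphere case 3} and forces the weighted argument in case~\ref{item:General Outside the Sphere case 4}. Once those sign and monotonicity facts about $h_{s,d}$ and its derivatives are in place, the induction on the $f^{(\ell)}$ and the uses of the necessary conditions \ref{item:NecCond SP} and \ref{item:NecCond DD} are routine.
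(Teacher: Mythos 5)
Your strategy coincides with the paper's proof in both parts: for case~\ref{item:General Outside the Sphere case 3} the downward induction from $f^{(k)}\ge 0$ on $[1,\infty)$ using $f^{(\ell)}(1)\ge 0$ and $f'(1)=0$, and for case~\ref{item:General Outside the Sphere case 4} the weighted convexity argument showing $\lambda\mapsto\lambda^{\frac s2+2}f''(\lambda)$ is nondecreasing on $(1,\infty)$ and nonnegative at $\lambda=1$ by combining Theorem~\ref{thm:Necessary Condition Riesz}\ref{item:NecCond SP} and \ref{item:NecCond DD}. Your treatment of case~\ref{item:General Outside the Sphere case 4} is complete and matches the paper essentially line by line, including the contiguous-derivative computation via \eqref{eq:HGderiv} and the use of \eqref{eq:positivity of 2F1 on [0,1)} with $c=\tfrac{d+2}{2}>b=\tfrac{4+s-d}{2}>0$, and the clearing of the denominator in \ref{item:NecCond DD} by the positive factor $R^2v'(R^2)=\tfrac{c_{s,d}}{4}R^{-s}$.

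In case~\ref{item:General Outside the Sphere case 3}, however, your explicit justification of the base step is backwards on part of the hypothesis range. By \eqref{eq:Derivatives_of_f_s_d_R_v_outside_sphere}, $h_{s,d}^{(k)}(\lambda)$ carries the prefactor $(-1)^k2^{-k}\lambda^{-\frac s2-k}\prod_{j=1}^{k-1}(s+2j)$, and the product is positive since $s>-2$. The Euler integral \eqref{eq:EIF} (available, after exchanging the two upper parameters, exactly when $\tfrac d2>\tfrac s2+k$, i.e.\ $k<\tfrac{d-s}{2}$) shows that $\HG21\bigl(\tfrac s2+k,\tfrac{2+s-d}{2};\tfrac d2;\lambda^{-1}\bigr)>0$; hence for \emph{odd} $k<\tfrac{d-s}{2}$ one gets $h_{s,d}^{(k)}<0$ on $(1,\infty)$, not $\ge 0$ as you assert (the sign at infinity is genuinely $(-1)^k$, since the hypergeometric factor tends to $1$). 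In that subcase $v^{(k)}\ge 0$ alone does not give $f^{(k)}\ge 0$, so the induction as written does not start. Your argument is sound for even $k<\tfrac{d-s}{2}$ (in particular $k=2$, the only value at which the lemma is invoked elsewhere, see Corollary~\ref{cor:Convex_external_fields_-2_s_d-4_outside_sphere}); for even $k\ge\tfrac{d-s}{2}$, \eqref{eq:positivity of 2F1 on [0,1)} does not apply directly to $\HG21\bigl(\tfrac s2+k,\tfrac{2+s-d}{2};\tfrac d2;\cdot\bigr)$ and an additional argument for its sign near $\lambda=1$ would be needed. To be fair, the paper's own proof of this case is a single sentence citing the same facts and is no more detailed at exactly this point; but as a self-contained proof of the statement as written (which admits odd $k\in[2,\tfrac{d-s}{2})$), the sign claim for $h_{s,d}^{(k)}$ is a gap you would have to close by a different mechanism, for instance the half-monotonicity argument for the function $g$ of \eqref{eq:def of g_s,d,q} used in Case~2 of the proof of Theorem~\ref{thm:Sphere Min}.
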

 (The case $s = d-4$ is covered in
 Corollary~\ref{cor:Convex_external_fields_-2_s_d-4_outside_sphere}).

\begin{proof}
We will handle each case individually.

\vspace{1ex}
\underline{Case} \ref{item:General Outside the Sphere case 3}: 
In this case the conditions on $v$ plus  \eqref{eq:Derivatives_of_f_s_d_R_v_outside_sphere}, \eqref{eq:positivity of 2F1 on [0,1)}, and \eqref{eq:HG1pos}, imply that $f^{(k)}(\lambda) \geq 0$ on $[1,\infty)$.  
Suppose that for some $\ell \in [1, k) \cap \mathbb{N}$, we have that $f^{(\ell+1)}(\lambda) \geq 0$ on $[1, \infty)$.  Then $f^{(\ell)}(\lambda)$ is an increasing function, and since $f^{(\ell)}(1) \geq 0$ (due to assumption \eqref{eq:-2<s<d-4, outside sphere, even derivatives} or
Theorem~\ref{thm:Necessary Condition Riesz}\ref{item:NecCond SP}
if $\ell = 1$), we find that $f^{(\ell)}$ is nonnegative on $[1, \infty)$. By induction, $f$ is an increasing function, establishing our claim.

\vspace{1ex}
\underline{Case} \ref{item:General Outside the Sphere case 4}: We want to show that $f''$ is nonnegative for $\lambda \geq 1$, or equivalently that the following quantity is nonnegative:
  \begin{equation*}
     \lambda^{\frac{s}{2} +2} f''(\lambda) =  R^{-s} \frac{s+2}{4}  \; \HG21 \left( 2 + \frac{s}{2}, \frac{2+s-d}{2}; \frac{d}{2}; \lambda^{-1} \right) + R^4 \lambda^{\frac{s}{2} +2} v''(R^2 \lambda).
  \end{equation*}
From
Theorem~\ref{thm:Necessary Condition Riesz}\ref{item:NecCond DD},
this quantity is nonnegative at $\lambda = 1$.
Since $R^4 \lambda^{\frac{s}{2} +2} v''(R^2 \lambda)$ is an increasing function for $\lambda \geq 1$, it is sufficient to show that $\HG21 \left( 2 + \frac{s}{2}, \frac{2+s-d}{2}; \frac{d}{2}; \lambda^{-1} \right)$ is also an increasing function, or equivalently, that $\HG21 \left( 2 + \frac{s}{2}, \frac{2+s-d}{2}; \frac{d}{2}; \lambda \right)$ is a decreasing function on $[0,1)$. Taking a derivative, we have
\begin{equation*}
    \frac{d}{d \lambda} \; \HG21 \left( 2 + \frac{s}{2}, \frac{2+s-d}{2}; \frac{d}{2}; \lambda \right) = - \frac{(s+4)(d-s-2)}{2d} \; \HG21 \left( 3 + \frac{s}{2}, \frac{4+s-d}{2}; \frac{d+2}{2}; \lambda \right).
\end{equation*}
Since $d-4 < s < d-3$,
$c = \frac{d+2}{2} > b = \frac{4+s-d}{2} > 0$,
and $\frac{(s+4)(d-s-2)}{2d} > 0$,
inequality \eqref{eq:positivity of 2F1 on [0,1)} implies the derivative is negative.
Hence $R^{-s} \frac{s+2}{4}  \; \HG21 \left( 2 + \frac{s}{2}, \frac{2+s-d}{2}; \frac{d}{2}; \lambda^{-1} \right)$ is an increasing function on $(1, \infty)$. Thus $f$ is convex, and therefore increasing function on $(1,\infty)$, so its global minimum occurs at $1$.
\end{proof}

As a corollary of Lemma \ref{lem:General Outside the Sphere}\ref{item:General Outside the Sphere case 3},
we have that convexity of $v$,
together with
Theorem~\ref{thm:Necessary Condition Riesz}\ref{item:NecCond SP},
is sufficient on $[1, \infty)$.
\begin{corollary}
    \label{cor:Convex_external_fields_-2_s_d-4_outside_sphere}
    Suppose $-2 < s \leq d-4$, $R > 0$ and  $v$ is $\mathcal{C}^2$ in the extended sense on $[R^2, \infty)$. If $v''$ is nonnegative on $[R^2, \infty)$ where $R > 0$ satisfies
    Theorem~\ref{thm:Necessary Condition Riesz}\ref{item:NecCond SP},
    then $f$ achieves its minimum on $[1,\infty)$ at $1$.
\end{corollary}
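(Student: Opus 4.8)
The plan is to read this corollary off Lemma~\ref{lem:General Outside the Sphere}\ref{item:General Outside the Sphere case 3} on the open range $-2<s<d-4$, and to dispose of the single endpoint $s=d-4$ (which that lemma excludes) by an elementary direct argument, using that the hypergeometric factor governing $f''$ outside the sphere degenerates to an affine polynomial at $s=d-4$.

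For $-2<s<d-4$ I would apply Lemma~\ref{lem:General Outside the Sphere}\ref{item:General Outside the Sphere case 3} with $k=2$. All its hypotheses hold: the standing assumption that $R$ satisfies Theorem~\ref{thm:Necessary Condition Riesz}\ref{item:NecCond SP} is given; $k=2\in 2\mathbb{N}$; $v$ is $\mathcal C^2$ in the extended sense on $[R^2,\infty)$; the assumption $v''\geq 0$ on $[R^2,\infty)$ is exactly $v^{(k)}(R^2\lambda)\geq 0$ for $\lambda\in[1,\infty)$; and the inequalities \eqref{eq:-2<s<d-4, outside sphere, even derivatives} are imposed only for $2\leq\ell<k=2$, hence are vacuous. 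The lemma then asserts that $f$ attains its infimum on $[1,\infty)$ at $1$; since it is attained, this is a minimum, which is the claim.

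It remains to treat $s=d-4$. Then $\tfrac{2+s-d}{2}=-1$ and $\tfrac{s}{2}+2=\tfrac{d}{2}$, so in \eqref{eq:Derivatives_of_f_s_d_R_v_outside_sphere} with $\ell=2$ the series terminates: $\HG21\left(\tfrac{s}{2}+2,\tfrac{2+s-d}{2};\tfrac{d}{2};z\right)=\HG21\left(\tfrac{d}{2},-1;\tfrac{d}{2};z\right)=1-z$. Hence, for $\lambda>1$,
\[
  f''(\lambda)=R^{-s}\,\frac{s+2}{4}\,\lambda^{-\frac{s}{2}-2}\bigl(1-\lambda^{-1}\bigr)+R^{4}\,v''(R^{2}\lambda),
\]
and letting $\lambda\to 1^{+}$ gives $f''(1)=R^{4}v''(R^{2})\geq 0$. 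Since $s+2>0$, $1-\lambda^{-1}\geq 0$ on $[1,\infty)$, and $v''\geq 0$ on $[R^{2},\infty)$, we get $f''\geq 0$ on $[1,\infty)$, so $f$ is convex there; as $f'(1)=0$ by Theorem~\ref{thm:Necessary Condition Riesz}\ref{item:NecCond SP}, $f'$ is nonnegative on $[1,\infty)$, hence $f$ is increasing and its minimum on $[1,\infty)$ is attained at $1$.

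There is essentially no obstacle: the substance of the corollary is already inside Lemma~\ref{lem:General Outside the Sphere}, and the only thing needing care is recording the degeneration at $s=d-4$, where the positivity inputs \eqref{eq:positivity of 2F1 on [0,1)}--\eqref{eq:HG1pos} used in that lemma's proof reduce to the trivial nonnegativity of $1-z$ on $[0,1]$.
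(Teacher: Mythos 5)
Your proposal is correct and follows essentially the same route as the paper: the range $-2<s<d-4$ is delegated to Lemma~\ref{lem:General Outside the Sphere}\ref{item:General Outside the Sphere case 3} (with $k=2$, the extra derivative conditions being vacuous), and the endpoint $s=d-4$ is handled by the explicit degenerate formula $f''(\lambda)=\frac{R^{4-d}(d-2)}{4}\lambda^{-\frac{d}{2}}\bigl(1-\tfrac{1}{\lambda}\bigr)+R^{4}v''(R^{2}\lambda)\geq 0$ together with $f'(1)=0$, exactly as in the paper's proof. Your additional remark identifying $\HG21\bigl(\tfrac{d}{2},-1;\tfrac{d}{2};z\bigr)=1-z$ just makes explicit the computation the paper states directly.
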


\begin{proof}
The case where $-2 < s < d-4$ is handled in Lemma \ref{lem:General Outside the Sphere} case \ref{item:General Outside the Sphere case 3}, so assume that $s = d-4$. Then, from \eqref{eq:Derivatives_of_f_s_d_R_v_outside_sphere}, we see that
\begin{equation*}
f''(\lambda) =  \frac{R^{4-d} (d-2)}{4} \lambda^{- \frac{d}{2}} \Big( 1 - \frac{1}{\lambda} \Big) + R^4 v''(R^2 \lambda),
\end{equation*}
which is nonnegative on $[1, \infty)$. Since $f_{d-4, d, R, v}'(1) = 0$, we see that $f_{d-4, d, R, v}'$ is nonnegative on $[1, \infty)$, so $f_{d-4, d, R, v}$ is increasing, giving us our claim.
\end{proof}

We note that in case \ref{item:General Outside the Sphere case 4} of Lemma \ref{lem:General Outside the Sphere}, since $\lambda^{\frac{s}{2}+2} v''(R^2 \lambda)$ is an increasing function, and it must be at least $0$ at $1$, this does tell us implicitly that $v$ is convex outside the sphere. However, convexity alone is not sufficient here, which is why the range $d-4 < s < d-3$ is excluded in
Corollary~\ref{cor:Convex_external_fields_-2_s_d-4_outside_sphere}.

\subsection{Proof of Theorem \ref{thm:Sphere Min}}\label{subsec:Proof for power law external field}

For this result, we combine Theorem~\ref{thm:Sufficient Conditions Riesz}, case \ref{item:General Outside the Sphere case 4} of Lemma \ref{lem:General Outside the Sphere}, and Lemma \ref{lem:General Inside the Sphere}. The proof is mainly breaking the ranges of $\alpha$ and $s$ into three cases, and showing that each of these cases satisfies some of the results listed above.

Before the proof of Theorem \ref{thm:Sphere Min}, we first show that $\alpha_{s,d}$ in \eqref{eq:Alpha Bound for Sphere} is continuous in $s$, by showing that $\lim_{s \rightarrow 0} \frac{s c_{s,d}}{2- 2 c_{s,d}} = - \frac{1}{2 b_d} $, where $c_{s,d}$ and $b_d$ are defined in  \eqref{eq:c_s,d definition} and \eqref{eq:b_d definition}, respectively. On the one hand, using
\[
  \frac{\partial c_{s,d}}{\partial s} = 
  c_{s,d} \left[\tfrac{1}{2} \psi_0\left(\tfrac{d-s}{2}\right) +
  \tfrac{1}{2} \psi_0\left(d - 1- \tfrac{s}{2}\right) -
  \psi_0(d - s - 1) \right]
\]
we get 
\[
  \lim_{s \to 0} \frac{2(1 - c_{s,d})}{s \, c_{s,d}} =
  -2 \lim_{s \to 0} \frac{ \frac{\partial c_{s,d}}{\partial s}}
  {c_{s,d} + s \frac{\partial c_{s,d}}{\partial s}} =
  \psi_0(d-1) - \psi_0(\tfrac{d}{2}) .
\]
On the other hand, 
$\psi_0(2 z) = \tfrac{1}{2}\left( \psi_0(z) + \psi_0(z + \tfrac{1}{2}) \right) + \log(2)$, see \cite[Eq. 5.5.8]{NIST:DLMF2023}, giving, with $z = \frac{d-1}{2}$,
\[
  b_d = \tfrac{1}{2} \left( \psi_0(\tfrac{d}{2}) - \psi_0(d-1)\right).
\]

\begin{proof}[Proof of Theorem \ref{thm:Sphere Min}]

We have $V(x) = v(\| x \|^2)$, where $v(r) = \frac{\gamma}{\alpha}r^{\alpha/2}$. If for some $R>0$, $\sigma_R$ is the equilibrium measure of $I_{s,V}$, then $R$ must satisfy \eqref{eq:radius for PL}, due to
Theorem~\ref{thm:Necessary Condition Riesz}\ref{item:NecCond SP}.
Then (\ref{eq:energy for PL}) follows from (\ref{eq:radius for PL}) and Proposition \ref{prop:Riesz Energy of a sphere}. It remains to show that $\sigma_R$ is indeed the equilibrium measure of $I_{s,V}$ with $R$ is given by (\ref{eq:radius for PL}). 

Since $\alpha \geq \alpha_{s,d}$
\begin{equation*}
    R^s \big(v(0) - v(R^2) \big)  = -R^{s + \alpha} \frac{\gamma}{\alpha}  = -\frac{c_{s,d}}{2 \alpha}  \geq \begin{cases}
        \frac{c_{s,d} - 1}{s} & s \neq 0\\
        b_d & s = 0
    \end{cases}
\end{equation*}
meaning that
Theorem~\ref{thm:Necessary Condition Riesz}\ref{item:NecCond BC0}
is satisfied. Likewise, we see that
\begin{equation*}
    R^2 \frac{v''(R^2)}{v'(R^2)}  = \frac{1}{2} (\alpha - 2) \geq - \frac{1}{2} \frac{(s+2) (d-s-4)}{2(d-s-3)}
\end{equation*}
so
Theorem~\ref{thm:Necessary Condition Riesz}\ref{item:NecCond DD}
is also satisfied.

First we set up some useful identities. For all $\ell  \in \mathbb{N}$
\begin{equation}\label{eq:Proof of Power law Min, external field derivatives}
    R^{2 \ell} v^{(\ell)}(R^2 \lambda) = R^{\alpha} \frac{\gamma}{2^{\ell}} \lambda^{\frac{\alpha}{2}-\ell} \prod_{j=1}^{\ell-1} (\alpha - 2j) 
\end{equation}
and
\begin{equation*}
   q(\kappa) = R^{s + \alpha} \gamma \kappa^{-\frac{s+\alpha}{2}} = \frac{c_{s,d}}{2}  \kappa^{-\frac{s+\alpha}{2}}
\end{equation*}
where $q$ is as in \eqref{eq:Def of inverse external field}, so for all $\ell \in \mathbb{N}$
\begin{equation}\label{eq:Proof of Power law Min, inverse external field derivatives}
    q^{(\ell)}(\kappa) = \frac{c_{s,d}}{2} \kappa^{-\frac{s+ \alpha}{2} - \ell} (-\frac{1}{2})^{\ell} \prod_{j=0}^{\ell-1} ( \alpha + s + 2j).
\end{equation}

As discussed at the beginning of Section~\ref{sec:Proofs}, the Frostman conditions \eqref{cond: Frostman} and the uniqueness of the equilibrium measure tell us it is sufficient to show that $f$ achieves its global minimum on $[0, \infty)$ at $1$ to prove that $\sigma_R$ is the equilibrium measure. In order to do this, we consider three cases, and for each one, we show that $v$ satisfies certain results from Section~\ref{subsec:sufficient conditions},
which indeed gives us that $1$ is the global minimum of $f$. 

\vspace{2ex}
\textbf{Case 1}: Suppose first that $-2 < s \leq d-4$ and $\alpha \geq 2$. Then $v$ is $\mathcal{C}^2$ in the extended sense and convex on $[0, \infty)$, so the desired result follows from Theorem~\ref{thm:Sufficient Conditions Riesz}.

\vspace{2ex}
\textbf{Case 2}: For our second case, suppose that $-2 < s < d-4$ and $\alpha_{s,d} \leq \alpha < 2$. Let $k = \lceil \frac{d-s}{2} \rceil$. Note that $k \geq 3$ since $s < d-4$.

From \eqref{eq:Proof of Power law Min, external field derivatives}, we have that $(-1)^k v^{(k)}(R^2 \lambda)$ is nonpositive on $[0,1]$.
Since $k = \lceil \frac{d-s}{2} \rceil$,   $(-1)^k f^{(k)}$ is negative on $[0,1]$, by  \eqref{eq:Derivatives of f_s,d,R,v in sphere},  \eqref{eq:positivity of 2F1 on [0,1)}, and \eqref{eq:HG1pos}.
Combining \eqref{eq:Derivatives of f_s,d,R,v in sphere}, \eqref{eq:radius for PL}, \eqref{eq:Proof of Power law Min, external field derivatives}, and \eqref{eq:Equating Higher Geom Series}, we have that for $2 \leq \ell < k$
\begin{align*}
    (-1)^{\ell} R^{s} f^{(\ell)} (1) & = \frac{c_{s,d}}{2^{\ell+1}}  \Big( \prod_{j=1}^{\ell-1} \frac{(d-s-2-2j)(s+2j)}{2 (d-s-2-j)}  - \prod_{j=1}^{\ell-1} (2 j - \alpha) \Big) \\
    & = \frac{c_{s,d}}{2^{\ell+1}}  \Big( \prod_{j=1}^{\ell-1} \frac{(d-s-2-2j)(s+2j)}{2 (d-s-2-j)(2j- \alpha)}  - 1 \Big) \prod_{j=1}^{\ell-1} (2 j - \alpha) .
\end{align*}

We then see that for $1 < j < \frac{d-s-2}{2}$,
\begin{align*}
    \frac{d}{dj} \log\Big( \frac{(d-s-2-2j)(s+2j)}{2 (d-s-2-j)(2j- \alpha)} \Big) & = \frac{-2}{d-s-2-2j} + \frac{2}{d-s-2-j} + \frac{2}{2j + s} - \frac{2}{2j - \alpha} \\
    & = \frac{-2j}{(d-s-2-2j)(d-s-2-j)} - \frac{2(s+ \alpha)}{(2j+s)(2j-\alpha)}
\end{align*}
which is negative, since $2 > \alpha > -s$. Thus, $\frac{(d-s-2-2j)(s+2j)}{2 (d-s-2-j)(2j- \alpha)}$ is strictly decreasing for $j \in [1, k-1]$. 

Now we will show that  $\alpha > 2 - \frac{(s+2)(d-s-4)}{2 (d-s-3)}$. Suppose to the contrary that $\alpha =  2 - \frac{(s+2)(d-s-4)}{2 (d-s-3)}$, so that $(-1)^2 f^{(2)} (1) = 0$. Then, since $\frac{(d-s-2-2j)(s+2j)}{2 (d-s-2-j)(2j- \alpha)}$ is strictly decreasing, we see that  $(-1)^{\ell} f^{(\ell)} (1) < 0$ for $2 < \ell < k$. Recall that $(-1)^{k} f^{(k)} (\lambda) < 0$ on $[0,1]$.
Now, suppose that for some $\ell \in \{2, ..., k-1\}$, we know that$(-1)^{\ell+1} f^{(\ell+1)} (\lambda)$ is negative $[0, 1]$. Then $(-1)^{\ell} f^{(\ell)} (\lambda)$ is strictly increasing on $[0,1]$, and since $(-1)^{\ell} f^{(\ell)} (1) \leq 0$ (the inequality being strict for $\ell > 2$), $(-1)^{\ell} f^{(\ell)} (\lambda)$ is negative on $[0,1)$. Thus, we can conclude inductively that $f^{(2)} (\lambda)$ is negative on $[0,1)$, and so, $f'(\lambda)$ is strictly decreasing on $[0,1]$. Since $f'(1) = 0$, we see that $f$ is strictly decreasing on $[0,1]$, so $f(0) < f(1)$, which is a contradiction to
Theorem~\ref{thm:Necessary Condition Riesz}\ref{item:NecCond BC0}.
Thus, we have that $\alpha > 2 - \frac{(s+2)(d-s-4)}{2 (d-s-3)}$,
and so $(-1)^2 f^{(2)} (1) > 0$.

Since $\frac{(d-s-2-2j)(s+2j)}{2 (d-s-2-j)(2j- \alpha)}$ is strictly decreasing for $j \in [1, k-1]$, and $(-1)^2 f^{(2)} (1) > 0$, we can now conclude that there must be some $\ell_0 \in \{ 2, ..., k-1\}$ such that for $3 \leq \ell \leq \ell_0$ 
\begin{equation*}
    \prod_{j=1}^{\ell-2} \frac{(d-s-2-2j)(s+2j)}{2 (d-s-2-j)(2j- \alpha)}   \leq \prod_{j=1}^{\ell-1} \frac{(d-s-2-2j)(s+2j)}{2 (d-s-2-j)(2j- \alpha)} 
\end{equation*}
and for $\ell_0 < \ell \leq k-1$
\begin{equation*}
    \prod_{j=1}^{\ell-2} \frac{(d-s-2-2j)(s+2j)}{2 (d-s-2-j)(2j- \alpha)}   \geq \prod_{j=1}^{\ell-1} \frac{(d-s-2-2j)(s+2j)}{2 (d-s-2-j)(2j- \alpha)}. 
\end{equation*}
This then implies that there is some $k_0 \in \{3, ..., k\}$ such that $(-1)^{\ell} f^{(\ell)} (1) \leq 0$ for $\ell \in [k_0, k) \cap \mathbb{N}$ and $ (-1)^{\ell} f^{(\ell)} (1) > 0$ for all $\ell \in [2, k_0) \cap \mathbb{N}$.
In other words, $f$ is strictly half-monotone of order $(k_0,k)$ at 1 (note that $f'(1)=0$ due to
Theorem~\ref{thm:Necessary Condition Riesz}\ref{item:NecCond SP}).
By Proposition \ref{prop:half-monotone implies unimodal}, the global minimum of $f$ is at $0$ or $1$; however, by
Theorem~\ref{thm:Necessary Condition Riesz}\ref{item:NecCond BC0},
we know $f(0) \geq f(1)$, giving our desired result for $[0, 1]$.

\bigskip 

Next, consider the function $g$ as in \eqref{eq:def of g_s,d,q}. From \eqref{eq:Proof of Power law Min, inverse external field derivatives}, we have that $(-1)^k q^{(k)}(\kappa)$ is nonnegative on $[0,1]$.
Since $k = \lceil \frac{d-s}{2} \rceil$,
\eqref{eq:Derivatives of g_s,d,q}, \eqref{eq:positivity of 2F1 on [0,1)}, and \eqref{eq:HG1pos}-\eqref{eq:HG1neg} imply that
$(-1)^k g^{(k)}$ is nonnegative on $[0,1]$.
To be precise, we use \eqref{eq:HG1pos} for when $s<d-5$ and $s\neq d-6$; \eqref{eq:HG1zer} for when $s=d-5$ or $s=d-6$; \eqref{eq:HG1neg} for when $d-5<s<d-4$.
Combining \eqref{eq:Derivatives of g_s,d,q}, \eqref{eq:Proof of Power law Min, inverse external field derivatives},  \eqref{eq:Equating Higher Geom Series, ver 2}, and the fact that
\begin{equation*}
    \HG21 \Big( \frac{s}{2} + 1, \frac{2 + s -d}{2}; \frac{d}{2}; 1 \Big) = \frac{c_{s,d}}{2},
\end{equation*}
we have that for $1 \leq \ell < k$
\begin{align*}
    (-1)^{\ell} g^{(\ell)} (1) & = \frac{c_{s,d}}{2^{\ell+1}}  \Big( - \prod_{j=0}^{\ell-1} \frac{(d-s-2-2j)(s+2j+2)}{2 (d-s-3-j)}  + \prod_{j=0}^{\ell-1} ( \alpha + s + 2j) \Big).
\end{align*}

Since $\alpha > 2 - \frac{(s+2)(d-s-4)}{2(d-s-3)}$, we see that
\begin{equation*}
    \alpha + s >  \frac{(d-s-2)(s+2)}{2 (d-s-3)} 
\end{equation*}
and for $j \geq 1$
\begin{equation*}
    \frac{(d-s-2-2j)(s+2j+2)}{2 (d-s-3-j)}  \leq \frac{s}{2} + j + 1 < \alpha + s + 2j.
\end{equation*}
Thus $(-1)^{\ell} g^{(\ell)} (1) \geq 0$ for all $\ell \in \{ 1, ..., k-1\}$. This shows that $-g$ is half-monotone of order $(1,k)$ at $1$. Thus, $g$ is decreasing on $[0,1]$ by Proposition \ref{prop:half-monotone implies unimodal}. From \eqref{eq:equivalence with f and q} and
Theorem~\ref{thm:Necessary Condition Riesz}\ref{item:NecCond SP}, we have $g(1)=2R^sf'(1)=0$, which implies $g\geq 0$ on $[0,1]$. Again by \eqref{eq:equivalence with f and q}, $f$ must be increasing on $[1,\infty)$, giving our desired result.

\vspace{2ex}
\textbf{Case 3}:  For our final case, suppose that $d-4< s < d-3$ and $\alpha_{s,d} \leq \alpha $. Combining \eqref{eq:Derivatives of f_s,d,R,v in sphere}, \eqref{eq:radius for PL}, \eqref{eq:Equating Higher Geom Series}, and the fact that $\alpha \geq \alpha_{s,d} \geq  2 - \frac{(d-s-4)(s+2)}{2 (d-s-3)}$, we have
\begin{equation*}
     R^{s} f'' (1) = \frac{c_{s,d}}{2^{3}}  \Big( \frac{(d-s-4)(s+2)}{2 (d-s-3)}  +(\alpha - 2) \Big) \geq 0.
\end{equation*}
We then see that
\begin{equation*}
    \lambda^{\frac{s}{2}+2}  \frac{d^2}{d \lambda^2} v(R^2 \lambda) =  R^{\alpha} \frac{\gamma(\alpha -2)}{4} \lambda^{\frac{s+\alpha}{2}}
\end{equation*}
which is an increasing function on $[1, \infty)$, since $\alpha > 2 > -s$. Thus, the conditions of Lemma~\ref{lem:General Outside the Sphere} case \ref{item:General Outside the Sphere case 4} have now been satisfied.

Let $k = \lceil \frac{\alpha}{2} \rceil +1$. For $0 \leq \lambda \leq 1$, and $\ell \in \{1, ..., k\}$
\begin{align*}
    \frac{d^{\ell}}{d \lambda^{\ell}} v(R^2 \lambda) = R^{\alpha} \frac{\gamma}{2^{\ell}} \lambda^{\frac{\alpha}{2}-\ell} \prod_{j=1}^{\ell-1} (\alpha - 2j),
\end{align*}
so $v^{(k)}(R^2 \lambda) \leq 0$ on $[0, 1)$, and $v^{(\ell)}(R^2) < \infty$. For $3 \leq \ell \leq k-2$, $v^{(\ell)}(0) = 0 < - h_{s,d}^{(\ell)}(0)$.

We have that
\begin{align*}
    R^s f''(1) & = \frac{c_{s,d}}{4} \left( \frac{(d-s-4)(s+2)}{2(d-s-3)} + \alpha - 2 \right) \\
    & \geq  \frac{c_{s,d}}{4} \left(\frac{(d-s-4)(s+2)}{2(d-s-3)} +  \Big( 2- \frac{(d-s-4)(s+2)}{2(d-s-3)}\Big)   - 2 \right) \\
    & = 0.
\end{align*}
Thus all the conditions of Lemma~\ref{lem:General Inside the Sphere} are satisfied, giving our desired result.
\end{proof}


\subsection{Proof of Proposition \ref{prop:connect to opt control}}\label{subsec:Proof for Wasserstein Result}

\begin{proof}
Note that for $V(x) = \frac{\gamma}{\alpha} \|x\|^{\alpha}$, minimizing $I_{s, V}$ over $\mathcal{P}(\mathbb{R}^d)$ is the same as minimizing the energy over $\mathcal{P}_{\alpha, c}(\mathbb{R}^d)$ for each $c$, then minimizing over positive $c$. So, for $s < d$, $s \neq 0$,
\begin{align*}
\inf_{\mu \in \mathcal{P}(\mathbb{R}^d)} I_{s,V}(\mu) & = 
\inf_{c > 0} \inf_{\mu \in \mathcal{P}_{\alpha, c}(\mathbb{R}^d)} I_{s, 0} (\mu) + 2\frac{\gamma}{\alpha} c^{\alpha} \\
& = \inf_{c > 0} \inf_{\mu \in \mathcal{P}_{\alpha, c}(\mathbb{R}^d)} c^{-s} I_{s} ((c^{-1} Id)_{\#}\mu) + 2\frac{\gamma}{\alpha}c^{\alpha} \\
& = \inf_{c > 0} \Bigg( c^{-s} \inf_{\nu \in \mathcal{P}_{\alpha, 1}(\mathbb{R}^d)} I_{s} (\nu) + 2 \frac{\gamma}{\alpha}c^{\alpha} \Bigg).
\end{align*}
Let $A =\inf\limits_{\nu \in \mathcal{P}_{\alpha, 1}(\mathbb{R}^d)} I_{s} (\nu)$, and assume $A$ is finite. Taking a derivative with respect to $c$, we see that
$$ -s c^{-s-1} A + 2\gamma c^{\alpha -1} = 0$$
is equivalent to
\[
  c =\left( \frac{s A}{2\gamma} \right)^{\frac{1}{s + \alpha}},
\]
so there is exactly one critical point ($s$ and $A$ are either both nonnegative or both nonpositive, so this value is positive). Since $\alpha > -s$, this also means that $c^{-s} A + 2\frac{\gamma}{\alpha} c^{\alpha}$ is strictly decreasing on $\Big( 0, \big( \frac{s A}{2\gamma} \big)^{ \frac{1}{s+\alpha}} \Big)$ and strictly increasing on $\Big( \big( \frac{s A}{2\gamma} \big)^{ \frac{1}{s+\alpha}}, \infty)$, so the minimum occurs at this point. This proves our first claim for $s \neq 0$, and the second claim is a similar proof in reverse.

For the logarithmic case, we have 
\begin{align*}
\inf_{\mu \in \mathcal{P}(\mathbb{R}^d)} I_{s,V}(\mu) & =\inf_{c > 0} \inf_{\mu \in \mathcal{P}_{\alpha, c}(\mathbb{R}^d)} I_{0, 0} (\mu) + 2\frac{\gamma}{\alpha} c^{\alpha} \\
& = \inf_{c > 0} \inf_{\mu \in \mathcal{P}_{\alpha, c}(\mathbb{R}^d)} I_{0} ((c^{-1} Id)_{\#}\mu) - \log(c) + 2\frac{\gamma}{\alpha}c^{\alpha} \\
& = \inf_{c > 0} \inf_{\nu \in \mathcal{P}_{\alpha, 1}(\mathbb{R}^d)} I_{0} (\nu)  - \log(c) + 2 \frac{\gamma}{\alpha}c^{\alpha}.
\end{align*}
Let $A =\inf\limits_{\nu \in \mathcal{P}_{\alpha, 1}(\mathbb{R}^d)} I_{0} (\nu)$, and assume $A$ is finite. Taking a derivative with respect to $c$, we see that
$$ -c^{-1} + 2\gamma c^{\alpha -1} = 0$$
is equivalent to
\[
  c =\left( \frac{1}{2\gamma} \right)^{ \frac{1}{\alpha}},
\]
so there is exactly one critical point. Since $\alpha > 0$, this also means that $ A - \log(c) + 2\frac{\gamma}{\alpha} c^{\alpha}$ is strictly decreasing on $\Big( 0, \big( \frac{1}{2\gamma} \big)^{ \frac{1}{\alpha}} \Big)$ and strictly increasing  on $\Big( \big( \frac{1}{2\gamma} \big)^{ \frac{1}{\alpha}}, \infty \Big)$, so the minimum occurs at this point. This proves our first claim about logarithmic energy, and the second claim is a similar proof in reverse.
\end{proof}

\section{Acknowledgements}

Ryan W. Matzke was supported by the NSF Postdoctoral Fellowship Grant 2202877. The authors would like to thank Rupert Frank, Johannes Hertrich, Arno Kuijlaars, John McCarthy, and Gabriele Steidl for their helpful conversations.
This research includes computations using the computational cluster Katana supported by Research Technology Services at UNSW Sydney.

\appendix

\section{Lennard\,--\,Jones type external fields}
\label{Sec:LJEF}


This section considers some additional examples of Lennard\,--\,Jones type external fields, namely 
with $\alpha > \beta$, and $\gamma, \eta > 0$,
\begin{equation} \label{Eq:EFLJ type}
    v(\rho) = \frac{\gamma}{\alpha} \rho^{\frac{\alpha}{2}} -
            \frac{\gamma \eta}{\beta} \rho^{\frac{\beta}{2}},
            \qquad \rho = \|x\|^2.
\end{equation}
We present a case where the equilibrium measure has compact support despite the fact that $v(\infty) := \lim_{\rho\to\infty} v(\rho) = 0$. This can only occur if $0 > \alpha$, in which case Theorem~\ref{thm:Sufficient Conditions Riesz}, which requires $v$ to be convex, fails.
We also provide examples when there are no solutions or multiple solutions for equation \eqref{Eq:EFLJ R eqn}, which must be satisfied by an optimal sphere radius $R_*$.

Note that
\begin{equation}\label{eq:EFLJ deriv}
     v'(\rho) = \frac{\gamma}{2} \rho^{\frac{\beta}{2}-1}
     \left( \rho^{\frac{\alpha-\beta}{2}} - \eta \right),
     \qquad
     v''(\rho) = \frac{\gamma}{4} \rho^{\frac{\beta}{2}-2}
     \left( (\alpha-2)\rho^{\frac{\alpha-\beta}{2}} - \eta (\beta-2) \right),
\end{equation}
so $v$ has a minimum 
$v^* = -\gamma \left(\frac{\alpha-\beta}{\alpha\beta}\right) \eta^\frac{\alpha}{\alpha-\beta} < 0$
at $\rho^* = \eta^\frac{2}{\alpha-\beta}$.
Moreover, for $\alpha \geq 2\geq \beta$, $v''(\rho)>0$ for
all $\rho > 0$, so $v$ is convex on $(0, \infty)$,
while for $\alpha < 2$, there is a single point of inflection
\[
  \tilde\rho := \left( \eta \; \frac{2-\beta}{2-\alpha} \right)^\frac{2}{\alpha-\beta} ,
\]
so $v$ is convex on $[0, \tilde\rho]$ and concave on $[\tilde\rho, \infty)$.



\subsection{A particular case with sphere radius $1$}

Let $- 2 < s \leq d-4$, $\beta = -b-s$ for some $b > 2$ and $\alpha = -2 -s$, and $\gamma, \eta > 0$.
The necessary condition
Theorem~\ref{thm:Necessary Condition Riesz}\ref{item:NecCond SP}
is that $R_*$ is a solution of
\begin{equation}\label{eq:LJ Reqn}
 R^{-2} - \eta R^{-b} = R^{\alpha+s} - \eta R^{\beta +s} = \frac{c_{s,d}}{2 \gamma}.
 \end{equation}
If we choose $\gamma, \eta > 0$ such that
\begin{equation}\label{eq:R1etagam}
 1 - \eta = \frac{c_{s,d}}{2\gamma},
\end{equation}
then one solution is $R_* = 1$, for which $\lambda = \frac{\| x \|^2}{R_*^2} = \rho$, see \eqref{Eq:EFLJ type}.

\begin{corollary}[A Special Lennard--Jones Field]\label{Cor:LJ suff neg alpha}
Let $-2 < s \leq d-4$,
\begin{equation}\label{Cor A1 gamma lower bound}
   \gamma > \frac{c_{s,d}}{2}
   \max \left\{1,
   \frac{(2b + s)(2 + s)}{s (b-2)} \right\},
\end{equation}
$\eta = 1 - \frac{c_{s,d}}{2\gamma}$,
$\alpha = -2 - s$ and $\beta = -b -s$ with
\[
  b > \max \left\{2, \ 
  \left(\frac{s+4}{\eta} - s -2\right),\ 
  \frac{1}{\eta}  \left( \frac{(d-s-2) (s+2)}{d \gamma} \; \HG21 \Big( \frac{s+4}{2} , \frac{4 + s -d}{2}; \frac{d+2}{2}; 1 \Big) + 2 \right) \right\}.
\]
Then
$\sigma_1$ is the equilibrium measure for the Lennard--Jones type external field \eqref{Eq:EFLJ type}. 
\end{corollary}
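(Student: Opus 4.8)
The plan is to verify the Frostman conditions \eqref{cond: Frostman} for $\mu=\sigma_1$ directly, using the reduction at the start of Section~\ref{sec:Proofs}. With the present choices one has $\alpha+s=-2$ and $\beta+s=-b$, so $v'(\rho)=\tfrac{\gamma}{2}\bigl(\rho^{\alpha/2-1}-\eta\rho^{\beta/2-1}\bigr)$, and \eqref{eq:R1etagam} shows $R_*=1$ solves \eqref{eq:LJ Reqn}, i.e.\ the stationarity condition Theorem~\ref{thm:Necessary Condition Riesz}\ref{item:NecCond SP}. Since $\gamma>c_{s,d}/2$ gives $\eta=1-\tfrac{c_{s,d}}{2\gamma}\in(0,1)$, the field has the Lennard--Jones shape of \eqref{Eq:EFLJ type}. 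Because $\sigma_1$ has finite energy and compact support, the Frostman characterization together with uniqueness reduces matters to showing that the modified potential $f=f_{s,d,1,v}$ of \eqref{eq:Modified_potential_f_expression} attains its global minimum over $[0,\infty)$ at $\lambda=1$. As $\alpha,\beta<0$ force $v(\rho)\to+\infty$ as $\rho\to0^+$, condition Theorem~\ref{thm:Necessary Condition Riesz}\ref{item:NecCond BC0} is automatic and $f(0^+)=+\infty$; I then analyze $f$ on $[0,1]$ and on $[1,\infty)$ separately. I write out the case $s\neq0$, the case $s=0$ being identical after replacing $h_{s,d}$ and $c_{s,d}/s$ by $h_{0,d}$ and $b_d$.

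For the interval $[0,1]$ (inside the sphere) I would use \eqref{eq:EFLJ deriv}: $v''(\rho)=\tfrac{\gamma}{4}\rho^{\beta/2-2}\bigl((\alpha-2)\rho^{(\alpha-\beta)/2}-\eta(\beta-2)\bigr)$, whose bracket is decreasing in $\rho$ (since $\alpha-2<0$), hence nonnegative on $[0,1]$ exactly when $(\alpha-2)\ge\eta(\beta-2)$, i.e.\ when $b\ge\frac{s+4}{\eta}-s-2$ — precisely the second term in the displayed lower bound on $b$. Thus $v''\ge0$ on $[0,1]$, and Corollary~\ref{cor:Convex_external_fields_-2_s_d-4_inside_sphere} (valid since $-2<s\le d-4$ and \ref{item:NecCond SP} holds at $R=1$) yields that $f$ is decreasing on $[0,1]$, so $\min_{[0,1]}f=f(1)$.

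The crux is the interval $[1,\infty)$ (outside the sphere), where $v$ fails to be convex ($v(\infty)=0$), so Corollary~\ref{cor:Convex_external_fields_-2_s_d-4_outside_sphere} is unavailable; instead I pass to the auxiliary function $g=y_{s,d}+q$ of \eqref{eq:def of g_s,d,q}. Substituting $\alpha+s=-2$, $\beta+s=-b$, $R=1$ into \eqref{eq:Def of inverse external field} gives the clean formula $q(\kappa)=\gamma\bigl(\kappa-\eta\kappa^{b/2}\bigr)$ on $[0,1]$, whence $q(1)=\gamma(1-\eta)=c_{s,d}/2$ and, by the Gauss evaluation $\HG21\bigl(\tfrac{s}{2}+1,\tfrac{2+s-d}{2};\tfrac{d}{2};1\bigr)=\tfrac{c_{s,d}}{2}$, $g(1)=y_{s,d}(1)+c_{s,d}/2=0$, while $g(0)=y_{s,d}(0)=-1<0$. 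I claim $g$ is strictly half-monotone of order $(2,2)$ at $1$. For its first property: $b>2$ gives $q''\le0$ on $[0,1]$, and from \eqref{eq:Derivatives of g_s,d,q} with $\ell=2$ together with \eqref{eq:positivity of 2F1 on [0,1)} one gets $y_{s,d}''\le0$ on $[0,1)$ (the sign of the relevant $\HG21$, whose middle parameter $\tfrac{s-d+6}{2}$ may be negative, must be handled by splitting the range of $s$, exactly as in Case~2 of the proof of Theorem~\ref{thm:Sphere Min}); hence $g''\le0$ on $[0,1]$, which also gives the third property at $\ell=2$. For the second property at $\ell=1$ I must check $g'(1)\le0$, i.e.\ $\tfrac{(d-s-2)(s+2)}{2d}\,\HG21\bigl(\tfrac{s+4}{2},\tfrac{4+s-d}{2};\tfrac{d+2}{2};1\bigr)+\gamma\bigl(1-\tfrac{\eta b}{2}\bigr)\le0$, and a direct rearrangement shows this is exactly the third (largest) lower bound imposed on $b$, so in fact $g'(1)<0$. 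Proposition~\ref{prop:half-monotone implies unimodal} then gives that $g$ is unimodal on $[0,1]$ and not monotone, so (using $g(0)<0=g(1)$) $g$ is negative near $0$ and positive on an interval ending at $1$. Transporting this through \eqref{eq:equivalence with f and q} (the factor $\kappa^{-s/2-1}$ is positive and $\kappa=\lambda^{-1}$ reverses orientation), $f'$ is positive then negative on $[1,\infty)$, so $f$ is unimodal there and $\min_{[1,\infty)}f=\min\{f(1),\lim_{\lambda\to\infty}f(\lambda)\}=\min\{f(1),0\}$. Finally $f(1)=\tfrac{c_{s,d}}{s}+v(1)=\tfrac{c_{s,d}}{s}-\tfrac{\gamma}{2+s}+\tfrac{\gamma\eta}{b+s}$, and rearranging (using $\eta=1-c_{s,d}/(2\gamma)$) shows that $f(1)\le0$ — i.e.\ condition Theorem~\ref{thm:Necessary Condition Riesz}\ref{item:NecCond BCinf} — is equivalent to $\gamma\ge\frac{c_{s,d}(2+s)(2b+s)}{2s(b-2)}$, which is the second term of the lower bound \eqref{Cor A1 gamma lower bound} on $\gamma$. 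Hence $\min_{[1,\infty)}f=f(1)$.

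Combining the two halves, $f$ attains its global minimum over $[0,\infty)$ at $\lambda=1$, so $\sigma_1=\mueq$. I expect the main obstacle to be the sign bookkeeping in the third paragraph: first, establishing $y_{s,d}''\le0$ on the whole interval $[0,1)$ uniformly for $-2<s\le d-4$ (careful use of \eqref{eq:positivity of 2F1 on [0,1)} together with the case split on $s$ relative to $d-5$, $d-6$, as in Case~2 of the proof of Theorem~\ref{thm:Sphere Min}); and second, matching each of the three lower bounds on $b$ and the bound on $\gamma$ to the exact inequality it supplies ($v''\ge0$ on $[0,1]$; $g'(1)\le0$; and $f(1)\le0$, respectively), with "$b>2$" and "$\gamma>c_{s,d}/2$" only ensuring the Lennard--Jones shape and $\eta\in(0,1)$.
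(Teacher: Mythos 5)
Your proposal is correct and follows essentially the same route as the paper's own proof: stationarity at $R_*=1$ via \eqref{eq:R1etagam}, convexity of $v$ on $[0,1]$ (the second bound on $b$) fed into Corollary~\ref{cor:Convex_external_fields_-2_s_d-4_inside_sphere}, concavity of $g$ together with $g'(1)<0$ (the third bound on $b$) fed into Proposition~\ref{prop:half-monotone implies unimodal} to get unimodality of $f$ on $[1,\infty)$, and the $\gamma$-bound \eqref{Cor A1 gamma lower bound} to ensure $f(1)\le\lim_{\lambda\to\infty}f(\lambda)=0$. The only differences are cosmetic — you check $v''\ge 0$ on $[0,1]$ directly rather than via the inflection point $\tilde\rho\ge 1$, and you explicitly flag the sign issue for the middle parameter of the $\HG21$ appearing in $y_{s,d}''$, a point the paper's proof passes over with the same citation of \eqref{eq:positivity of 2F1 on [0,1)}.
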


\begin{proof}
If $b \geq \frac{s+4}{\eta} - s -2$, then the points of inflection $\tilde\lambda = \tilde\rho \geq 1$, so $v$ is convex on $[0, 1]$
and, by Corollary~\ref{cor:Convex_external_fields_-2_s_d-4_inside_sphere},
the modified potential $f$ attains its global minimum on $[0, 1]$ at $1$.

With $q$ as in \eqref{eq:Def of inverse external field},
\eqref{eq:EFLJ deriv} and $\beta - \alpha = 2 - b$,
\[
  q(\kappa) = 2  \kappa^{- \frac{s}{2}-1}v'( \kappa^{-1})
  = \gamma \kappa \left(1 - \eta \kappa^{\frac{b}{2}-1}\right),
\]
so
\[
  q'(\kappa) = \frac{\gamma}{2 } \left(2 - \eta b \kappa^{\frac{b-2}{2}}\right),
  \qquad
  q''(\kappa) = - \frac{\gamma \eta}{4} b (b-2) \kappa^{\frac{b-4}{2}},
\]
and $b > 2$ implies that $q''$ is negative on $(0,1]$. With $g$ as in \eqref{eq:def of g_s,d,q}, we see that $g'' < 0$ on $(0,1]$ as well, due to \eqref{eq:Derivatives of g_s,d,q} and \eqref{eq:positivity of 2F1 on [0,1)}.

If
\[
  b > \frac{1}{\eta}  \left( \frac{(d-s-2) (s+2)}{d \gamma} \;
       \HG21 \Big( \frac{s+4}{2} , \frac{4 + s -d}{2}; \frac{d+2}{2}; 1 \Big) + 2 \right),
\]
then, using \eqref{eq:Derivatives of g_s,d,q},
\begin{equation*}
-g'(1) = - \frac{1}{2} \left(  \frac{(d-s-2) (s+2)}{d} \right) \; \HG21 \Big( \frac{s+4}{2} , \frac{4 + s -d}{2}; \frac{d+2}{2}; 1 \Big) + \frac{\gamma}{2 } ( \eta b - 2) > 0.
\end{equation*}
Thus, by Proposition \ref{prop:half-monotone implies unimodal}, $g$ is unimodal, and since $g(1) = 0$, due to \eqref{eq:equivalence with f and q} and the fact that $f'(1) = 0$, we know that there is some $\kappa \in [0,1]$ so that $g$ is nonpositive on $[0, \kappa)$ and nonnegative on $[\kappa, 1]$. Again using \eqref{eq:equivalence with f and q}, we see that $f'$ is nonnegative on $[1, \frac{1}{\kappa})$ and nonpositive on $( \frac{1}{\kappa}, \infty)$, with the second interval being empty if $\kappa = 0$. Thus, $f$ is unimodal on $[1, \infty)$, 
and, with $R_* = 1$,
\[
  f(1) = \frac{c_{s,d}}{s} + v(1) =
  \frac{c_{s,d}}{2}\left(\frac{2b+s}{s(b+s)} \right)
  - \gamma \frac{(b-2)}{(b+s)(2+s)}
  <  0 = \lim_{\lambda \to \infty} f(\lambda),
\]
so the infimum of $f$ on $[1, \infty)$ is attained at $1$.
This then implies that $\mueq = \sigma_1$.
\end{proof}

\subsection{A numerical example}
\label{Sec:LJEF num ex}

Let $d = 8$, $s = 4$, so $c_{s,d} = \frac{1}{2}$ and consider the
Lennard\,--\,Jones type external field \eqref{Eq:EFLJ type} with parameters
\[
  \alpha = -6, \quad \beta = -12, \quad \gamma = 5 \quad\mbox{and}\quad \eta = \frac{19}{20}.
\]
These values of $\alpha, \beta$ correspond to the classical Lennard\,--\,Jones field, see \cite{FischerWendland2023potentials} for example.
Then \eqref{eq:R1etagam} is satisfied, so $R_* = 1$ satisfies \eqref{eq:LJ Reqn},
and $\lambda = 1$ is the global minimum of the corresponding modified potential, as illustrated in Figure~\ref{fig:LJ example}~(B).
These parameter values give $b = 8$,
the bound \eqref{Cor A1 gamma lower bound}
is $\gamma > \frac{5}{4}$ and all the conditions of Corollary~\ref{Cor:LJ suff neg alpha} are satisfied, so $\sigma_1$ is the equilibrium measure.
Note, however, that there is another solution to \eqref{eq:LJ Reqn},
around $R = 4.47$, as illustrated in Figure~\ref{fig:LJ example}~(A).
\begin{figure}[ht]
\centering
\begin{subfigure}[t]{0.47\textwidth}
\centering
    \includegraphics[width=\textwidth]{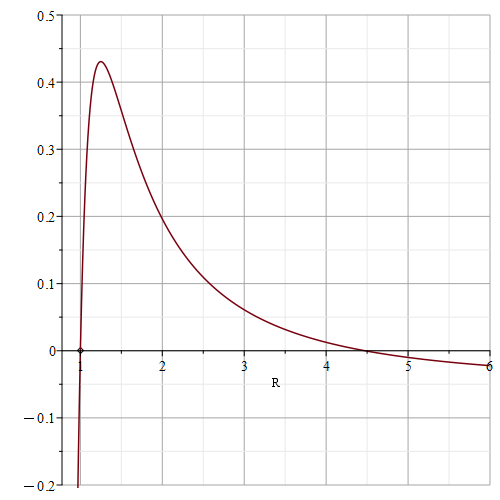}
    \caption{\label{fig:LJ example_R_eqn}
    Graph of $R^{s+2} v''(R) - \frac{c_{s,d}}{4}$ whose zeros satisfy Theorem~\ref{thm:Necessary Condition Riesz}\ref{item:NecCond SP}   for the optimal radius, including $R_* = 1$}
\end{subfigure}
\quad
\begin{subfigure}[t]{0.47\textwidth}
\centering
\includegraphics[width=\textwidth]{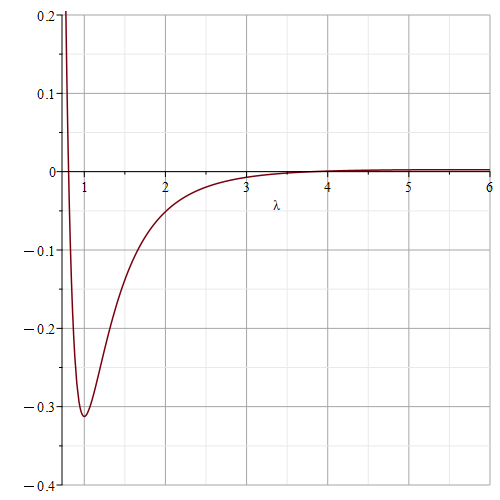}
\caption{\label{fig:LJ example_mod_pot}
The modified potential $f(\lambda)$ for $R_* = 1$ defined in
\eqref{eq:Modified_potential_f_expression},
with global minimum at $\lambda = 1$}
\end{subfigure}
\caption{\label{fig:LJ example}
 Lennard--Jones type external field with $d = 8$, $s = 4$, $\alpha = -6$, $\beta = -12$,
$\gamma = 5$ and $\eta = \frac{19}{20}$}
\end{figure}

For these parameter values,
but with  $\gamma = 1$ and $\eta = \frac{3}{4}$
then \eqref{eq:R1etagam} is satisfied so $R_* = 1$ is a solution of \eqref{eq:LJ Reqn},
but $\lambda = 1$ is a local, not global, minimizer of the modified potential,
as \eqref{Cor A1 gamma lower bound} and hence Theorem~\ref{thm:Necessary Condition Riesz}\ref{item:NecCond BCinf}
are not satisfied.
Also, for $\gamma = \frac{1}{3}$ and $\eta = \frac{1}{2}$, equation \eqref{eq:LJ Reqn} has no solutions.
In these instances the support of the equilibrium measure is not a sphere.

\section{Properties of Hypergeometric Functions}
\label{se:appendix HG}

\subsection{Gauss hypergeometric function}
Throughout this paper, we make use of two types of hypergeometric series,
\begin{equation}\label{eq:HG}
  \HG21(a, b; c; z) := \sum_{k=0}^\infty \frac{(a)_k (b)_k}{(c)_k} \frac{z^k}{k!},
\end{equation}
and
\begin{equation}\label{eq:HG32}
  \HG32(a, b, p; c,q; z) := \sum_{k=0}^\infty \frac{(a)_k (b)_k (p)_k}{(c)_k (q)_k} \frac{z^k}{k!},
\end{equation}
where $(a)_k = a (a+1) \cdots (a+k-1)$ is the Pochhammer symbol, 
which are both absolutely convergent for $|z| < 1$.

\subsection{Behavior at $1$}

The behaviour at $z = 1$ depends on the value of $c - a -b$:
\begin{itemize}
\item If $\Re(c - a - b) > 0$, then \cite[15.4.20]{NIST:DLMF2023} (also known
  as Gauss summation theorem)
\begin{equation}\label{eq:HG1pos}
  \HG21(a, b; c; 1) = \frac{\Gamma(c) \Gamma(c-a-b)}{\Gamma(c-a) \Gamma(c-b)}.
\end{equation}
\item
If $c = a + b$, then \cite[15.4.21]{NIST:DLMF2023} 
\begin{equation}\label{eq:HG1zer} 
  \lim_{z\to 1^-} \frac{\HG21(a, b; a+b; z)}{-\log(1 - z)} =
  \frac{\Gamma(a+b)}{\Gamma(a) \Gamma(b)}.
\end{equation}
\item
If $\Re(c - a - b) < 0$, then \cite[15.4.23]{NIST:DLMF2023} 
\begin{equation}\label{eq:HG1neg}
  \lim_{z\to 1^-} \frac{\HG21(a, b; c; z)}{(1 - z)^{c-a-b}} =
  \frac{\Gamma(c) \Gamma(a+b-c)}{\Gamma(a) \Gamma(b)}.
\end{equation}
\end{itemize}
As an immediate consequence, we have that for all $k \in \mathbb{N}_0$, if $s < d-k-2$,
\begin{align}\label{eq:Equating Higher Geom Series}
\frac{1}{2} \; & \HG21\left(\frac{s}{2}+k, \frac{2+s-d}{2} + k; \frac{d}{2}+k;1\right) \nonumber \\
& = \frac{d-s-k-2}{d+2k} \; \HG21\left(\frac{s}{2}+k+1, \frac{2+s-d}{2} + k+1;\frac{d}{2}+k+1;1\right).
\end{align}
Likewise, for all $k \in \mathbb{N}_0$, if $s < d-k-3$,
\begin{align}\label{eq:Equating Higher Geom Series, ver 2}
\frac{1}{2} \; & \HG21\left(\frac{s}{2}+k+1, \frac{2+s-d}{2} + k; \frac{d}{2}+k; 1\right) \nonumber \\
& = \frac{d-s-k-3}{d+2k} \; \HG21\left(\frac{s}{2}+k+2, \frac{2+s-d}{2} + k+1;\frac{d}{2}+k+1;1\right).
\end{align}

\subsection{Euler integral representation}
Euler's integral formula \cite[15.6.1]{NIST:DLMF2023},
for $\Re(c) > \Re(b) > 0$ and $|\arg(1-z)| < \pi$, is
\begin{equation} \label{eq:EIF}
\HG21(a, b; c; z) = \frac{\Gamma(c)}{\Gamma(b)\Gamma(c-b)}
\int_0^1  \frac{u^{b-1} (1 - u)^{c-b-1}}{(1 - z u)^a} \rd u .
\end{equation}
This formulation leads us to the following result, which is used repeatedly throughout the text: 
\begin{equation}\label{eq:positivity of 2F1 on [0,1)}
    \HG21(a, b; c; z) > 0 \text{ for } c > b > 0 \text{ and } z \in [0,1).
\end{equation}
A $\HG32$ can be written with a similar integral formula, involving a $\HG21$  (see \cite[16.5.2]{NIST:DLMF2023}  with $p=2$, $q=1$): For $\Re (b_0) > \Re (a_0) >0$ and
$|\arg(1-z)|<\pi$,
\begin{equation*}
  \HG32(a_0,a_1,a_2;b_0,b_1;z)
  =\frac{\Gamma(b_0)}{\Gamma(a_0)\Gamma(b_0-a_0)}
  \int_0^1t^{a_0-1}(1-t)^{b_0-a_0-1}
  {}_2\rF_1(a_1,a_2;b_1;zt)\rd t
\end{equation*}
In particular, for $a_1=a_2=1$ and $b_1=2$
and using ${}_2\rF_1(1,1;2;z)=-\frac{\log(1-z)}{z}$, we find 
\begin{equation}\label{eq:EIF32log}
  \HG32(a_0,1,1;b_0,2;z)
  =-\frac{\Gamma(b_0)}{z\Gamma(a_0)\Gamma(b_0-a_0)}
  \int_0^1t^{a_0-2}(1-t)^{b_0-a_0-1}
  \log(1-zt)\rd t.
\end{equation}

\subsection{Derivatives}
The derivative of a hypergeometric function is, \cite[15.5.1]{NIST:DLMF2023},
\begin{equation} \label{eq:HGderiv}
  \frac{\rd}{\rd z} \; \HG21(a, b; c; z) = \frac{ab}{c} \; \HG21(a+1, b+1; c+1; z).
\end{equation}

\begin{equation*}
    \frac{\rd}{\rd z} \; \HG32(a, b, c; p,q; z) = \frac{abc}{pq} \; \HG32(a+1, b+1, c+1; p+1,q+1; z).
\end{equation*}

We also have
\begin{equation}\label{eq:HGderivFor z inverse}
    \frac{\rd}{\rd z} ( z^{-a} \; \HG21( a,b;c; z^{-1}) = -a z^{-a-1} \; \HG21( a+1,b;c; z^{-1})
\end{equation}
which follows from \cite[Eq 15.2.3]{AbrS64}.

\subsection{Gauss quadratic transformation}
See \cite[2.11 (5)]{ErdMOT53} 
\begin{equation} \label{eq:GHGquad}
  \HG21\left(a, b; 2b; \frac{4z}{(1+z)^2}\right) = 
  (1 + z)^{2a} \; \HG21\left(a, a-b+\tfrac{1}{2}; b+\tfrac{1}{2}; z^2\right),
  \quad z\in[0, 1],
\end{equation}
with the series absolutely convergent at $z = 1$ when $b-a > 0$.



\section{Riesz energy and potential}\label{sec:Riesz Energy of Sphere}

\subsection{Legendre duplication formula for the Euler Gamma function}
The Legendre duplication formula, \cite[Eq.~5.5.5]{NIST:DLMF2023}, is
\begin{equation}\label{eq:LegDup}
  \Gamma(\tfrac{1}{2})\Gamma(2 z) = 2^{2z-1}\Gamma(z)\Gamma(z+\tfrac{1}{2}),
  \quad\text{for $2z \neq 0, -1, -2, \ldots$}.
\end{equation}

\subsection{Funk\,--\,Hecke formula}
See, for instance, {\cite[p.~18]{Mul66},
  \cite[Eq.~(5.1.9),~p.~197]{BorHS19}}. Recall that $\sigma_1$ denotes the
uniform probability measure on $\dS^{d-1}$, $d\geq2$. Then, for all
$x\in\dS^{d-1}$, 
\begin{equation}\label{eq:funkhecke}
  \int_{\dS^{d-1}} p(x\cdot y)\sigma_1(\rd y)
  =\tau_{d-1}   \int_{-1}^1 p(t)(1-t^2)^{\frac{d-3}{2}}\rd t,
\end{equation}
where
\begin{equation}\label{eq:Funk-Hecke constant}
 \frac{1}{\tau_{d-1}} = \int_{-1}^{1}(1-t^2)^{\frac{d-3}{2}} \rd t =  \mathrm{Beta}(\tfrac{1}{2},\tfrac{d-1}{2})  :=\frac{\Gamma(\frac{1}{2})\Gamma(\frac{d-1}{2})}{\Gamma(\frac{d}{2})}.   
\end{equation}
In probabilistic terms, this means that if $Y$ is a random vector of
$\mathbb{R}^d$ uniformly distributed on $\dS^{d-1}$ then for all
$x\in\dS^{d-1}$, the law of $x\cdot Y$ has density
$\tau_{d-1}(1-t^2)^{\frac{d-3}{2}}\mathbbm{1}_{t\in[-1,1]}$. This is an
arcsine law when $d=2$, a uniform law when $d=3$, a semicircle law when $d=4$,
and more generally, for arbitrary values of $d\geq2$, a Beta law on $[-1,1]$.

\subsection{Riesz Potential for uniform measure on a Sphere}\label{sec:Appendix Pot of Sphere}


\begin{proposition}\label{prop: Riesz/Log Potential of a Sphere}
For $-2 < s < d-1$ and $R >0$
\begin{equation*}
    U_{s}^{\sigma_R}(x) =
    \begin{cases}
        R^{-s} \hsd{\frac{\|x\|^2}{R^2}} & s \neq 0, \\
        -\log(R) + \hzd{\frac{\|x\|^2}{R^2}} & s = 0.
    \end{cases}
\end{equation*}
\end{proposition}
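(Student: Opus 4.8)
The plan is to reduce to $R=1$ by scaling, apply the Funk--Hecke formula \eqref{eq:funkhecke} to collapse the surface integral to a one-dimensional integral, and then recognize the outcome as an Euler-type integral representation of $\HG21$ (when $s\neq0$) or of $\HG32$ (when $s=0$).

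Write $x=r\xi$ with $r=\|x\|$ and $\xi\in\dS^{d-1}$; the case $x=0$ is immediate, since then $U_{s}^{\sigma_R}(0)=K_s$ of a vector of norm $R$, which matches $h_{s,d}(0)$. Writing $y=R\omega$ with $\omega\in\dS^{d-1}$, one has $\|x-y\|^2=r^2+R^2-2rR(\xi\cdot\omega)$, so the integrand depends on $\omega$ only through $\xi\cdot\omega$, and \eqref{eq:funkhecke} gives, for $s\neq0$,
\[
U_{s}^{\sigma_R}(x)=\frac{\tau_{d-1}}{s}\int_{-1}^1(r^2+R^2-2rRt)^{-s/2}(1-t^2)^{\frac{d-3}{2}}\,\rd t,
\]
and the analogous identity with $-\tfrac12\log(r^2+R^2-2rRt)$ in place of $\tfrac1s(\,\cdot\,)^{-s/2}$ when $s=0$. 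I then set $\lambda:=r^2/R^2$ and factor out $R^{-s}$ (respectively $-\log R$, using $\tau_{d-1}\int_{-1}^1(1-t^2)^{\frac{d-3}{2}}\rd t=1$ from \eqref{eq:Funk-Hecke constant}, and splitting $\log(r^2+R^2-2rRt)=2\log R+\log(1+\lambda-2\sqrt\lambda t)$). Next I substitute $t=1-2u$, which turns $1+\lambda-2\sqrt\lambda t$ into $(1+\sqrt\lambda)^2\bigl(1-z(1-u)\bigr)$ with $z:=\tfrac{4\sqrt\lambda}{(1+\sqrt\lambda)^2}\in[0,1]$ and turns $1-t^2$ into $4u(1-u)$; after the further change $u\mapsto1-u$ the integral reduces to a constant multiple of
\[
\int_0^1(1-zu)^{-s/2}\bigl(u(1-u)\bigr)^{\frac{d-3}{2}}\,\rd u
\qquad\text{(resp. with $\log(1-zu)$ in place of $(1-zu)^{-s/2}$).}
\]
For $s=0$ one must first also split off the $2\log(1+\sqrt\lambda)$ coming from the factored $(1+\sqrt\lambda)^2$, which produces the $-\log(1+\sqrt\lambda)$ term of $h_{0,d}$.

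This last integral is now exactly Euler's formula \eqref{eq:EIF} with $a=\tfrac s2$, $b=\tfrac{d-1}{2}$, $c=d-1$, yielding $\tfrac{\Gamma(\frac{d-1}{2})^2}{\Gamma(d-1)}\HG21\bigl(\tfrac s2,\tfrac{d-1}{2};d-1;z\bigr)$ when $s\neq0$; for $s=0$ it is \eqref{eq:EIF32log} with $a_0=\tfrac{d+1}{2}$, $b_0=d$, yielding a multiple of $z\,\HG32\bigl(1,1,\tfrac{d+1}{2};2,d;z\bigr)$. Gathering all the accumulated constants---the factor $\tau_{d-1}$, the powers of $4$ coming from $1-t^2=4u(1-u)$, and the Gamma factors from the Euler formulas---the Legendre duplication formula \eqref{eq:LegDup}, applied with argument $\tfrac{d-1}{2}$ in the first case and $\tfrac d2$ in the second, collapses the constant to exactly $1$, and one recovers $R^{-s}h_{s,d}(\lambda)$ (resp.\ $-\log R+h_{0,d}(\lambda)$) as in \eqref{eq:Riesz Sphere Potential Function no Radius}. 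The hypothesis $-2<s<d-1$ guarantees absolute convergence of all the one-dimensional integrals throughout---in particular $s<d-1$ is exactly the condition for integrability at $u\to1$ when $z=1$, i.e.\ when $x$ lies on $\dS^{d-1}_R$---and that the $\HG21$ series in \eqref{eq:Riesz Sphere Potential Function no Radius} converges up to and including $z=1$.

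The only real obstacle is the bookkeeping of multiplicative constants: carrying the factors correctly through the two substitutions and the Euler integral formulas, and choosing parameters so that the Gamma factors cancel via \eqref{eq:LegDup}. There is no conceptual difficulty; the $s=0$ case merely requires the additional care of peeling off the two logarithmic terms before the remaining integral can be identified with a $\HG32$.
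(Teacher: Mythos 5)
Your argument is correct and follows essentially the same route as the paper's proof (Lemmas \ref{lem:Riesz Potential of Sphere} and \ref{lem:Log Potential of Sphere}): Funk--Hecke, the affine substitution $t\mapsto u$, Euler's integral representations \eqref{eq:EIF} and \eqref{eq:EIF32log}, and the Legendre duplication formula to cancel the constants. The only cosmetic difference is that the paper additionally records the quadratic-transformation form \eqref{eq:FHHGb}, which is not needed for the proposition as stated.
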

This follows immediately from Lemmas \ref{lem:Riesz Potential of Sphere} and \ref{lem:Log Potential of Sphere}, and as a consequence, we can easily compute then Riesz energy of $\sigma_R$ using \eqref{eq:Riesz Sphere Potential Function no Radius}.


\begin{proposition}\label{prop:Riesz Energy of a sphere}
Let $-2 < s < d-1$ and $R > 0$. Then
\begin{equation*}
    I_{s}(\sigma_R) = \begin{cases}
        \frac{R^{-s}}{s} c_{s,d}, & s \neq 0\\
        - \log(R) + b_{d}, & s = 0
    \end{cases}.
\end{equation*}
\end{proposition}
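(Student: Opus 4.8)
The plan is to reduce the double integral defining $I_s(\sigma_R)$ to a single integral of the potential and then evaluate a hypergeometric function at the boundary point $1$. By Fubini's theorem, which applies since the Riesz kernel $K_s$ is integrable on bounded sets for $-2 < s < d$, we have $I_s(\sigma_R) = \int U_s^{\sigma_R}\,\rd\sigma_R$. Proposition~\ref{prop: Riesz/Log Potential of a Sphere} already supplies a closed form for $U_s^{\sigma_R}(x)$ in terms of $\hsd{\|x\|^2/R^2}$ (respectively $\hzd{\cdot}$ when $s=0$), valid at every $x$ and in particular on the sphere. Since $\sigma_R$ is carried by $\dS^{d-1}_R$, the ratio $\|x\|^2/R^2$ equals $1$ for $\sigma_R$-a.e.\ $x$, so the integral collapses to $I_s(\sigma_R) = R^{-s}\hsd{1}$ when $s\neq 0$ and $I_s(\sigma_R) = -\log(R) + \hzd{1}$ when $s=0$.

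It then remains only to identify $\hsd{1}$ with $c_{s,d}/s$ and $\hzd{1}$ with $b_d$. For $s\neq 0$ I would start from the defining formula \eqref{eq:Riesz Sphere Potential Function no Radius} and apply the Gauss quadratic transformation \eqref{eq:GHGquad} with $z=1$, $a=s/2$, $b=(d-1)/2$: here $2b=d-1$, $b+\tfrac12=d/2$, $a-b+\tfrac12=(2+s-d)/2$, the argument $4\sqrt\lambda/(1+\sqrt\lambda)^2$ equals $1$, and the factors $(1+\sqrt\lambda)^{-s}$ and $(1+\sqrt\lambda)^{2a}$ cancel, leaving $\hsd{1}=\tfrac1s\,\HG21(\tfrac s2,\tfrac{2+s-d}{2};\tfrac d2;1)=c_{s,d}/s$ by \eqref{eq:c_s,d definition}. (Equivalently, one may evaluate $\HG21(\tfrac s2,\tfrac{d-1}{2};d-1;1)$ directly by the Gauss summation theorem \eqref{eq:HG1pos} and match the resulting $\Gamma$-quotient to $c_{s,d}$ via the Legendre duplication formula \eqref{eq:LegDup}; the two routes coincide.) For $s=0$ one simply sets $\lambda=1$ in \eqref{eq:Riesz Sphere Potential Function no Radius}: the $\HG32$ argument becomes $1$, the prefactor $\sqrt\lambda/(1+\sqrt\lambda)^2$ becomes $\tfrac14$, and $\log(1+\sqrt\lambda)$ becomes $\log 2$, so $\hzd{1}=-\log 2+\tfrac14\,\HG32(1,1,\tfrac{d+1}{2};2,d;1)=b_d$ by \eqref{eq:b_d definition}.

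The only delicate point — the nearest thing to an obstacle — is the admissibility of the quadratic transformation and of Gauss summation at the boundary value $z=1$: \eqref{eq:GHGquad} and \eqref{eq:HG1pos} require $b-a>0$ and $\Re(c-a-b)>0$ respectively, and in both cases this condition is exactly $(d-1-s)/2>0$, i.e.\ the standing hypothesis $s<d-1$. The same inequality guarantees that the series and the potential $U_s^{\sigma_R}$ are finite on the sphere, legitimizing the Fubini step. Everything else is bookkeeping, and assembling the three displays gives the stated value of $I_s(\sigma_R)$.
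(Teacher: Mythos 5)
Your proof is correct and follows essentially the same route as the paper: since the paper only gestures that the energy "follows easily" from Proposition~\ref{prop: Riesz/Log Potential of a Sphere}, the intended argument is exactly yours — integrate the potential against $\sigma_R$, which evaluates $h_{s,d}$ (resp.\ $h_{0,d}$) at $\lambda=1$, and identify $h_{s,d}(1)=c_{s,d}/s$ via \eqref{eq:FHHGb}/\eqref{eq:GHGquad} (valid since $s<d-1$) and $h_{0,d}(1)=b_d$ from \eqref{eq:b_d definition}.
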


\begin{lemma}\label{lem:Riesz Potential of Sphere}
For $s < d-1$, $s\neq 0$, $R > 0$, and $\rho = \frac{\|x\|}{R}$
\begin{align}
\label{eq:FHHGa}
  U_s^{\sigma_R}(x) & =
  \frac{R^{-s}}{s} (1 + \rho)^{-s} \; \HG21\left(\tfrac{s}{2}, \tfrac{d-1}{2}; d-1; \frac{4 \rho}{(1+ \rho)^2}\right),  \\[1ex]
    \label{eq:FHHGb}
   & = \begin{cases}
   \displaystyle
  \frac{R^{-s}}{s} \; \HG21\left(\tfrac{s}{2}, \tfrac{s-d}{2}+1; \tfrac{d}{2}; \rho^2 \right) &
   0 \leq \rho \leq 1,\\[1ex]
   \displaystyle
   \frac{R^{-s}}{s} \rho^{-s} \; \HG21\left(\tfrac{s}{2}, \tfrac{s-d}{2}+1; \tfrac{d}{2}; \rho^{-2} \right) &
   \rho \geq 1,
   \end{cases}.
\end{align}
\end{lemma}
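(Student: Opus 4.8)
The plan is to compute $U_s^{\sigma_R}$ by collapsing it to a one--dimensional integral via the Funk--Hecke formula, recognizing that integral as an Euler representation of a ${}_2\rF_1$, simplifying the constants with the Legendre duplication formula to get \eqref{eq:FHHGa}, and then applying the Gauss quadratic transformation \eqref{eq:GHGquad} twice to obtain the two power--series forms in \eqref{eq:FHHGb}.

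First, by the homogeneity $K_s(Rw)=R^{-s}K_s(w)$ and the rotational invariance of both $K_s$ and $\sigma_R$, it suffices to treat $R=1$: one has $U_s^{\sigma_R}(x)=R^{-s}U_s^{\sigma_1}(x/R)$ and $\|x/R\|=\rho$. Fixing $x=r\theta_0$ with $\theta_0\in\dS^{d-1}$ and $r=\|x\|$, and using $\|x-y\|^2=r^2+1-2r(\theta_0\cdot\theta)$ for $y=\theta\in\dS^{d-1}$, the Funk--Hecke formula \eqref{eq:funkhecke} applied to $p(t)=(r^2+1-2rt)^{-s/2}$ gives
\[
  U_s^{\sigma_1}(x)=\frac{\tau_{d-1}}{s}\int_{-1}^{1}(r^2+1-2rt)^{-s/2}(1-t^2)^{\frac{d-3}{2}}\,\rd t .
\]
Here I would pause to record that the integral is finite precisely under the standing hypothesis $s<d-1$: the only possible singularity is at $t=1$ in the case $\|x\|=1$, where the integrand behaves like $(1-t)^{(d-3-s)/2}$, integrable iff $s<d-1$, while the endpoint $t=-1$ only needs $d\geq 2$.

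Next, the substitution $u=(1+t)/2$ gives $1-t^2=4u(1-u)$ and $r^2+1-2rt=(1+r)^2\bigl(1-\tfrac{4r}{(1+r)^2}u\bigr)$, turning the integral into $2\cdot 4^{(d-3)/2}(1+r)^{-s}\int_0^1 u^{(d-3)/2}(1-u)^{(d-3)/2}\bigl(1-\tfrac{4r}{(1+r)^2}u\bigr)^{-s/2}\,\rd u$, which is exactly Euler's integral \eqref{eq:EIF} for $\HG21$ with $a=\tfrac{s}{2}$, $b=\tfrac{d-1}{2}$, $c=d-1$, $z=\tfrac{4r}{(1+r)^2}$ (note $c>b>0$ and $z\in[0,1]$, and at $z=1$, i.e.\ $\|x\|=1$, the identity persists by continuity since $c-a-b=\tfrac{d-1-s}{2}>0$, which is the Gauss summation theorem \eqref{eq:HG1pos}). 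Collecting the $\Gamma$--factors from \eqref{eq:EIF}, the constant \eqref{eq:Funk-Hecke constant}, and $2\cdot 4^{(d-3)/2}=2^{d-2}$, the Legendre duplication formula \eqref{eq:LegDup} with argument $\tfrac{d-1}{2}$ collapses the whole prefactor to $\tfrac1s$; rescaling back via $r=R\rho$ (so $\tfrac{4r}{(1+r)^2}\mapsto\tfrac{4\rho}{(1+\rho)^2}$ and $(1+r)^{-s}\mapsto R^{-s}(1+\rho)^{-s}$) yields \eqref{eq:FHHGa}.

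Finally, for \eqref{eq:FHHGb} I apply the Gauss quadratic transformation \eqref{eq:GHGquad} to the right--hand side of \eqref{eq:FHHGa}, with $a=\tfrac{s}{2}$ and $b=\tfrac{d-1}{2}$, so that $a-b+\tfrac12=\tfrac{s-d}{2}+1$ and $b+\tfrac12=\tfrac{d}{2}$; the key point is that $\tfrac{4z}{(1+z)^2}$ is invariant under $z\mapsto 1/z$, so both $z=\rho$ (valid for $0\leq\rho\leq1$) and $z=\rho^{-1}$ (valid for $\rho\geq1$) produce the same ${}_2\rF_1$ argument $\tfrac{4\rho}{(1+\rho)^2}$. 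Choosing $z=\rho$ contributes the factor $(1+\rho)^{-s}(1+\rho)^{s}=1$ and gives the first case, while choosing $z=\rho^{-1}$ contributes $(1+\rho)^{-s}(1+\rho^{-1})^{s}=\rho^{-s}$ and gives the second case; absolute convergence of the transformed series at the borderline $z=1$ (i.e.\ $\rho=1$) again follows from $b-a=\tfrac{d-1-s}{2}>0$. I expect the only real obstacle to be bookkeeping: checking that the $\Gamma$--factors and powers of two collapse to $\tfrac1s$ via Legendre duplication, and keeping track of which branch of the quadratic transformation applies on either side of $\rho=1$; all the genuine analytic requirements (finiteness of the integral, validity of the integral representation and of the quadratic transformation at $z=1$) reduce to the single inequality $s<d-1$.
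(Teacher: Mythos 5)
Your proposal is correct and follows essentially the same route as the paper's proof: Funk--Hecke to reduce to a one-dimensional integral, the substitution $t=2u-1$, Euler's integral representation with $a=\tfrac{s}{2}$, $b=\tfrac{d-1}{2}$, $c=d-1$, Legendre duplication to collapse the constants to $\tfrac1s$, and then the quadratic transformation \eqref{eq:GHGquad} applied with $z=\rho$ and $z=\rho^{-1}$ (exploiting the invariance of $\tfrac{4z}{(1+z)^2}$ under inversion) to obtain the two cases of \eqref{eq:FHHGb}, with the single condition $s<d-1$ guaranteeing convergence at $\rho=1$. The only cosmetic differences are your reduction to $R=1$ by homogeneity and the paper's one-line separate treatment of $x=0$, which your formulas cover trivially since $\HG21(\cdot,\cdot;\cdot;0)=1$.
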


\begin{proof}
For $x =0$, the potential $U_{s}^{\sigma_R}$ is $\frac{R^{-s}}{s}$.

For $x \neq 0$, using the Funk-Hecke formula \eqref{eq:funkhecke} 
and the substitution $t = 2 u - 1$, gives
\begin{align*}
\int_{\mathbb{R}^d} \frac{1}{s} \|x - y\|^{-s} \rd \sigma_R(y) & = \int_{\mathbb{S}^{d-1}} \frac{1}{s} \|x - Ry\|^{-s} \rd \sigma_1(y) \\
& = \int_{\mathbb{S}^{d-1}} \frac{R^{-s}}{s} \Big( \frac{\|x\|^2}{R^2} + \|y\|^2 - 2 \frac{\|x\|}{R} \Big\langle \frac{x}{\|x\|}, y \Big\rangle \Big)^{- \frac{s}{2}} \rd \sigma(y) \\
& = \tau_{d-1}  \frac{R^{-s}}{s} \int_{-1}^{1} ( 1 + \rho^2 -  2 \rho t)^{- \frac{s}{2}} (1-t^2)^{\frac{d-3}{2}} \rd t\\
 & =   2 \tau_{d-1}  \frac{R^{-s}}{s} \int_0^1 \left( (1+\rho)^2 - 4 \rho u\right)^{-\frac{s}{2}} (4 u (1-u))^\frac{d-3}{2} \rd u\\[1ex]
   & = 2^{d-2} \tau_{d-1}  \frac{R^{-s}}{s} (1+\rho)^{-s} \int_0^1 \left(1 - \frac{4\rho u}{(1+\rho)^2}\right)^{-\frac{s}{2}}
       u^\frac{d-3}{2} (1-u)^\frac{d-3}{2} \rd u .
\end{align*}
Euler's integral formula \eqref{eq:EIF} with
$a = \frac{s}{2}$, $b = \frac{d-1}{2}$, and $c = d-1$,
and \eqref{eq:Funk-Hecke constant}
plus the Legendre duplication formula \eqref{eq:LegDup}
with $z = \frac{d-1}{2}$, gives \eqref{eq:FHHGa}.
The quadratic transformation \eqref{eq:GHGquad}
gives the first case in \eqref{eq:FHHGb}
with $a = \frac{s}{2}$, $b = \frac{d-1}{2}$,
so $b-a = \frac{d-1-s}{2} > 0$ if and only if $ s < d-1$.
The second case follows using the transformation $u = 1/\lambda$.
\end{proof}

\begin{lemma}\label{lem:Log Potential of Sphere}
For  $s = 0$, $R > 0$, and $\rho = \frac{\|x\|}{R}$
\begin{align}
\label{eq:FHHGLog}
  \int_{\mathbb{R}^d} -\log( \|x - y\|) \rd \sigma_R(y) 
   =  &
  -\log(R) -  \log(1 + \rho) + \nonumber \\
  & \frac{\rho}{(1+ \rho)^2} \; \HG32\left(1, 1,  \tfrac{d+1}{2}; 2, d; \frac{4 \rho}{(1+ \rho)^2}\right). 
\end{align}
\end{lemma}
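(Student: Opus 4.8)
The plan is to follow the proof of Lemma~\ref{lem:Riesz Potential of Sphere} almost verbatim, replacing Euler's integral formula \eqref{eq:EIF} by its logarithmic counterpart \eqref{eq:EIF32log}. First I would dispose of $x=0$: every point of $\dS^{d-1}_R$ then lies at distance $R$ from the origin, so the potential equals $-\log R$, which matches \eqref{eq:FHHGLog} at $\rho=0$ (the $\HG32$ term carries the prefactor $\rho=0$ and $\log(1+0)=0$). For $x\neq0$, writing $y=Rz$ with $z\in\dS^{d-1}$ gives $-\log\|x-Rz\|=-\log R-\tfrac12\log\bigl(1+\rho^2-2\rho\langle x/\|x\|,z\rangle\bigr)$ with $\rho=\|x\|/R$, and the Funk--Hecke formula \eqref{eq:funkhecke} turns the spherical average into
\[
  -\log R-\tfrac12\,\tau_{d-1}\int_{-1}^{1}\log(1+\rho^2-2\rho t)\,(1-t^2)^{\frac{d-3}{2}}\,\rd t.
\]

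Next I would apply the substitution $t=2u-1$, which turns $1+\rho^2-2\rho t$ into $(1+\rho)^2-4\rho u$ and $(1-t^2)^{\frac{d-3}{2}}$ into $\bigl(4u(1-u)\bigr)^{\frac{d-3}{2}}$, so the integral becomes $-2^{d-3}\tau_{d-1}\int_0^1\log\bigl((1+\rho)^2-4\rho u\bigr)\,\bigl(u(1-u)\bigr)^{\frac{d-3}{2}}\rd u$. Splitting $\log\bigl((1+\rho)^2-4\rho u\bigr)=2\log(1+\rho)+\log\bigl(1-\tfrac{4\rho u}{(1+\rho)^2}\bigr)$, the first piece contributes $-2^{d-2}\tau_{d-1}\log(1+\rho)\,\mathrm{Beta}(\tfrac{d-1}{2},\tfrac{d-1}{2})$; by \eqref{eq:Funk-Hecke constant} and the Legendre duplication formula \eqref{eq:LegDup} with $z=\tfrac{d-1}{2}$ the constant $2^{d-2}\tau_{d-1}\,\mathrm{Beta}(\tfrac{d-1}{2},\tfrac{d-1}{2})$ equals $1$, so this piece is exactly $-\log(1+\rho)$.

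For the remaining piece I would set $z=\tfrac{4\rho}{(1+\rho)^2}\in[0,1]$ and observe that $\int_0^1 u^{\frac{d-3}{2}}(1-u)^{\frac{d-3}{2}}\log(1-zu)\,\rd u$ is precisely the integral on the right-hand side of \eqref{eq:EIF32log} with $a_0=\tfrac{d+1}{2}$ and $b_0=d$ (the hypotheses $\Re(b_0)>\Re(a_0)>0$ hold because $d\geq2$, which also guarantees integrability of $u^{\frac{d-3}{2}}$ at the endpoints). Hence that integral equals $-\dfrac{z\,\Gamma(\frac{d+1}{2})\Gamma(\frac{d-1}{2})}{\Gamma(d)}\,\HG32\bigl(\tfrac{d+1}{2},1,1;d,2;z\bigr)$, and the remaining piece becomes $2^{d-3}\tau_{d-1}\dfrac{\Gamma(\frac{d+1}{2})\Gamma(\frac{d-1}{2})}{\Gamma(d)}\,z\,\HG32\bigl(\tfrac{d+1}{2},1,1;d,2;z\bigr)$. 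Simplifying the constant with \eqref{eq:Funk-Hecke constant} and \eqref{eq:LegDup} (now with $z=\tfrac d2$) collapses it to $\tfrac14$, and since $\tfrac14 z=\tfrac{\rho}{(1+\rho)^2}$ and $\HG32$ is symmetric in its three numerator and its two denominator parameters, this is exactly the $\HG32$ term in \eqref{eq:FHHGLog}. Collecting the three contributions $-\log R$, $-\log(1+\rho)$ and $\tfrac{\rho}{(1+\rho)^2}\HG32(1,1,\tfrac{d+1}{2};2,d;\tfrac{4\rho}{(1+\rho)^2})$ gives the claim. The only delicate point is the bookkeeping of the powers of $2$ and Gamma factors through the two invocations of the duplication formula; the rest is a routine substitution.
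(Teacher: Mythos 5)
Your proposal is correct and follows essentially the same route as the paper's proof: treat $x=0$ separately, then apply the Funk--Hecke formula, substitute $t=2u-1$, split off the $\log(1+\rho)$ term, and identify the remaining integral via \eqref{eq:EIF32log} with $a_0=\tfrac{d+1}{2}$, $b_0=d$, simplifying the constants with \eqref{eq:Funk-Hecke constant} and the Legendre duplication formula. The only (immaterial) difference is bookkeeping: the paper invokes duplication with $z=\tfrac{d-1}{2}$ while you use $z=\tfrac{d}{2}$ for the final constant, and both correctly yield the factor $\tfrac14\cdot\tfrac{4\rho}{(1+\rho)^2}=\tfrac{\rho}{(1+\rho)^2}$.
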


\begin{proof}
For $x = 0$, the potential is clearly $-\log(R)$.

For $x \neq 0$, using the Funk-Hecke formula \eqref{eq:funkhecke}, Euler's integral representation \eqref{eq:EIF}, and \eqref{eq:Funk-Hecke constant}, with
the substitution $t = 2 u - 1$, gives
\begin{align*}
-\int_{\mathbb{R}^d} & \log( \|x - y\|) \rd \sigma_R(y)  = -\int_{\mathbb{S}^{d-1}} \log(\|x - Ry\|) \rd \sigma_1(y) \\
& = -\int_{\mathbb{S}^{d-1}} \Bigg( \log(R) + \frac{1}{2} \log\Big( \frac{\|x\|^2}{R^2} + \|y\|^2 - 2 \frac{\|x\|}{R} \Big\langle \frac{x}{\|x\|}, y \Big\rangle \Big) \rd \sigma(y) \\
& = -\tau_{d-1}  \int_{-1}^{1} \Big( \log(R) + \frac{1}{2}\log( 1 + \rho^2 -  2 \rho t) \Big) (1-t^2)^{\frac{d-3}{2}} \rd t\\
 & = - \log(R) -   \tau_{d-1}  \int_0^1 \log\left( (1+\rho)^2 - 4 \rho u\right) (4 u (1-u))^\frac{d-3}{2} \rd u\\[1ex]
   & = - \log(R) - \log(1+\rho) -
    2^{d-3} \tau_{d-1} \int_0^1 \log\left(1 - \frac{4\rho u}{(1+\rho)^2}\right)
       u^\frac{d-3}{2} (1-u)^\frac{d-3}{2} \rd u .
\end{align*}
Euler's integral formula \eqref{eq:EIF32log} with $a_0 = \frac{d+1}{2}$ and $b_0 = d$ plus the Legendre duplication formula \eqref{eq:LegDup}
with $z = \frac{d-1}{2}$ gives \eqref{eq:FHHGLog}.
\end{proof}

Thus, for $s < d-1$, setting
\begin{equation*}
g_{s,d}(\rho) :=  \begin{cases}
 \frac{1}{s} (1 + \rho)^{-s} \; \HG21\left(\tfrac{s}{2}, \tfrac{d-1}{2}; d-1; \frac{4 \rho}{(1+ \rho)^2}\right) & s \neq 0 \\
    -  \log(1 + \rho) + \frac{\rho}{(1+ \rho)^2} \; \HG32\left(1, 1,  \tfrac{d+1}{2}; 2, d; \frac{4 \rho}{(1+ \rho)^2}\right) & s= 0
\end{cases},
\end{equation*}
and taking the derivative (and after series expansion and some algebra)
we get the single formula for $-2 < s < d-2$
\begin{align*}
    g_{s,d}'(\rho) & = -\frac{1}{(1+ \rho)^{s+3}} \Bigg( (1+ \rho)^{2} \; \HG21 \left( \frac{s}{2}, \frac{d-1}{2};d-1; \frac{4 \rho}{(1+ \rho)^2} \right)\nonumber\\
    &\qquad\qquad+ (\rho-1) \; \HG21 \left( 1+\frac{s}{2}, \frac{d+1}{2};d; \frac{4 \rho}{(1+ \rho)^2} \right) \Bigg)\\
    & = \begin{cases}
        - \frac{d-s-2}{d} \; \rho \;
        \HG21 \left( 1+ \frac{s}{2}, \frac{4+s-d}{2}; \frac{d+2}{2}; \rho^2 \right) & \rho \leq 1 \\
        - \rho^{-s-1} \; \HG21 \left( 1+\frac{s}{2}, \frac{2+s-d}{2}; \frac{d}{2}; \rho^{-2} \right)  & \rho \geq 1 
    \end{cases}.
\end{align*}

Let us define $h_{s,d}: [0, \infty) \rightarrow \mathbb{R}$ by, 
\begin{equation*}
\hsd{\lambda} = \begin{cases}
 \frac{1}{s} (1 + \sqrt{\lambda})^{-s} \; \HG21\left(\tfrac{s}{2}, \tfrac{d-1}{2}; d-1; \frac{4 \sqrt{\lambda}}{(1+ \sqrt{\lambda})^2}\right) & s \neq 0 \\
    -  \log(1 + \sqrt{\lambda}) + \frac{\sqrt{\lambda}}{(1+ \sqrt{\lambda})^2} \; \HG32\left(1, 1,  \tfrac{d+1}{2}; 2, d; \frac{4 \sqrt{\lambda}}{(1+ \sqrt{\lambda})^2}\right) & s= 0
\end{cases}.
\end{equation*}
In particular we obtain, using $\hsd{\lambda} = g_{s,d}(\sqrt{\lambda})$, that for $-2 < s < d-2$
\begin{equation*}
     \hsdp{\lambda}  = \begin{cases}
        - \frac{d-s-2}{2d} \; \HG21 \left( 1+ \frac{s}{2}, \frac{4+s-d}{2}; \frac{d+2}{2}; \lambda \right) & \lambda \leq 1 \\
        - \frac{1}{2}\lambda^{-\frac{s}{2}-1} \; \HG21 \left( 1+ \frac{s}{2}, \frac{2+s-d}{2}; \frac{d}{2}; \lambda^{-1} \right) & \lambda \geq 1 
    \end{cases}.
\end{equation*}
This expression is useful in determining if $\sigma_R$ is the equilibrium measure, since $h_{s,d}$ describes the potential of the uniform measure on the unit sphere.

\ifAddSuffCond
\section{Additional sufficient conditions}
\label{Ap:Ex not Suff}

In this appendix, we gather some additional results giving conditions for which $f$ achieves its minimum on $[0,1]$ or $[1, \infty)$ at $1$, which are not used elsewhere in the paper, but may be useful for the study of other external fields.

We first provide some sufficient conditions for the behavior inside the sphere.
\begin{lemma}\label{lem:Extra Inside the Sphere}
If one of the following conditions hold, then $f$ achieves its minimum on $[0,1]$ at $1$.
\begin{enumerate}[label=\textnormal{(\alph*)}]
\item\label{item:General Inside the Sphere case 1} For $-2 < s < d-4$, $R> 0$, $k = \lceil \frac{d-s}{2} \rceil$, and $v$  is $\mathcal{C}^k$ in the extended sense on $[0, R^2]$, with $(-1)^k v^{(k)}$ nonpositive on $[0,R^2]$, such that \ref{item:NecCond SP} and \ref{item:NecCond BC0} in
Theorem~\ref{thm:Necessary Condition Riesz} are satisfied.
In addition, there is some $k_0 \in \{3, ..., k\}$ such that for $k_0 \leq \ell < k$, $(-1)^{\ell} f^{(\ell)}(1) \leq 0$, that is
\begin{equation}\label{eq:Completely_monotonic_external_fields_-2_s_d-4_inside_sphere_eq 1}
   (-1)^{\ell+1} v^{(\ell)}(R^2) \geq (-1)^{\ell} R^{-s -2 \ell} h_{s,d}^{(\ell)}(1),
\end{equation}
and for $2 \leq \ell < k_0$, $(-1)^{\ell} f^{(\ell)}(1) \geq 0$, that is
\begin{equation}\label{eq:Completely monotonic_external_fields_-2_s_d-4_inside_sphere_eq 2}
   (-1)^{\ell+1} v^{(\ell)}(R^2) \leq (-1)^{\ell} R^{-s -2 \ell} h_{s,d}^{(\ell)}(1).
\end{equation}
\item\label{item:General Inside the Sphere case 2}   For $-2 < s \leq d-4$, $R> 0$, and for some $2 \leq k \leq \lfloor \frac{d-s}{2} \rfloor$, $v$  is $\mathcal{C}^k$ in the extended sense  on $[0, R^2]$, with $(-1)^k v^{(k)}$ nonnegative on $[0,R^2]$. In addition,
Theorem~\ref{thm:Necessary Condition Riesz}\ref{item:NecCond SP}
is satisfied, and for $2 \leq \ell \leq k-1$,
$(-1)^{\ell} f^{(\ell)}(1) \geq 0$, that is
\begin{equation}\label{eq:Pos_derivative_-2_s_d-4_inside_sphere} 
   (-1)^{\ell+1} v^{(\ell)}(R^2) \leq (-1)^{\ell} R^{-s -2 \ell} h_{s,d}^{(\ell)}(1)
\end{equation}
\item\label{item:Extra Inside the Sphere case 1} For $d-4 < s < d-3$, $R>0$, $k>2$, and $v$ is $\mathcal{C}^k$ in the extended sense on $[0, R^2]$, with $v^{(k)}$ nonnegative on $[0,R^2]$, such that conditions \ref{item:NecCond SP}, \ref{item:NecCond DD}, and \ref{item:NecCond BC0} of
Theorem~\ref{thm:Necessary Condition Riesz}
are satisfied.
In addition, for $3 \leq \ell \leq k-1$, $v^{(\ell)}(R^2) < \infty$ and $f^{(\ell)}(0) \geq 0$, that is
\begin{equation}\label{eq:d-4<s<d-3,insidesphere}
    R^{s +2 \ell}v^{(\ell)}(0) \geq -  h_{s,d}^{(\ell)}(0).
\end{equation}
\item\label{item:Extra Inside the Sphere case 2} For $d \geq 3$, $s = d-4$, $R> 0$, and $v$ is $\mathcal{C}^2$ in the extended sense on $[0,R^2]$ such that conditions \ref{item:NecCond SP} and \ref{item:NecCond BC0} of
Theorem~\ref{thm:Necessary Condition Riesz}
are satisfied.
In addition, there is some $\lambda_2 \in [0, 1)$ such that
$v''(R^2 \lambda)$ is nonpositive on $[0,\lambda_2)$ and
nonnegative on $(\lambda_2, 1]$.

\end{enumerate}

\end{lemma}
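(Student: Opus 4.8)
The plan is to prove all four items by the machinery of Section~\ref{subsec:sufficient conditions}: for each we must only show that the one-dimensional modified potential $f=f_{s,d,R,v}$ of \eqref{eq:Modified_potential_f_expression} attains its minimum over $[0,1]$ at $\lambda=1$. I would use the closed forms \eqref{eq:Derivatives of f_s,d,R,v in sphere} for $f^{(\ell)}$ on $[0,1)$ together with the positivity \eqref{eq:positivity of 2F1 on [0,1)} and the boundary asymptotics \eqref{eq:HG1pos}--\eqref{eq:HG1neg} to determine, for each $\ell$, the sign of $h_{s,d}^{(\ell)}$ on $[0,1)$ and whether $h_{s,d}^{(\ell)}(\lambda)$ stays finite or tends to $\mp\infty$ as $\lambda\to1^-$, and then feed the resulting sign pattern into Proposition~\ref{prop:half-monotone implies unimodal} through a reflection $\varphi(\lambda)=\pm f^{(m)}(1-\lambda)$. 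When the argument produces that $f$ is decreasing on $[0,1]$ we are done at once; when it only produces unimodality of $f$ on $[0,1]$, the minimum over $[0,1]$ is attained at an endpoint, and Theorem~\ref{thm:Necessary Condition Riesz}\ref{item:NecCond BC0} ($f(0)\ge f(1)$) places it at $1$. Throughout, $f'(1)=0$ by Theorem~\ref{thm:Necessary Condition Riesz}\ref{item:NecCond SP}, which is the $\ell=1$ instance of the sign conditions.

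Item \ref{item:General Inside the Sphere case 1}, with $k=\lceil(d-s)/2\rceil\ge3$ and $-2<s<d-4$, follows the template of Case~2 in the proof of Theorem~\ref{thm:Sphere Min}: here $c-a-b=d-s-1-k>0$ for the hypergeometric factor in $h_{s,d}^{(k)}$, so by \eqref{eq:HG1pos} this factor is finite, and by \eqref{eq:positivity of 2F1 on [0,1)} and the sign of the product of linear factors in \eqref{eq:Derivatives of f_s,d,R,v in sphere} one gets $(-1)^kh_{s,d}^{(k)}\le0$ on $[0,1)$; with $(-1)^kv^{(k)}\le0$ this gives $(-1)^kf^{(k)}\le0$ on $[0,1]$. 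The two assumed chains of inequalities on $(-1)^\ell f^{(\ell)}(1)$, together with $f'(1)=0$, say exactly that $f$ is (strictly) half-monotone of order $(k_0,k)$ at $1$; Proposition~\ref{prop:half-monotone implies unimodal} then gives unimodality on $[0,1]$, and \ref{item:NecCond BC0} finishes. Item \ref{item:General Inside the Sphere case 2} is the interior counterpart of Lemma~\ref{lem:General Outside the Sphere}\ref{item:General Outside the Sphere case 3}: for $2\le k\le\lfloor(d-s)/2\rfloor$, again $c-a-b>0$ at $\lambda=1$, and the same bookkeeping yields $(-1)^kf^{(k)}\ge0$ on $[0,1]$; a downward induction using the assumed $(-1)^\ell f^{(\ell)}(1)\ge0$ for $2\le\ell\le k-1$ (and $f'(1)=0$) propagates to $(-1)^\ell f^{(\ell)}\ge0$ on $[0,1]$ for all $1\le\ell\le k$, so $f'\le0$ on $[0,1]$ and $f$ is decreasing there.

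Item \ref{item:Extra Inside the Sphere case 2} is the most transparent. When $s=d-4$, the factor $4+s-d$ vanishes in the coefficient of $h_{s,d}''$ in \eqref{eq:Derivatives of f_s,d,R,v in sphere}; equivalently $h_{d-4,d}'\equiv-1/d$ on $[0,1]$ by \eqref{eq:Riesz Sphere Pot Derive, no Radius, Main}, hence $h_{d-4,d}''\equiv0$ and $f''(\lambda)=R^4v''(R^2\lambda)$ for $\lambda\in[0,1]$. The assumed sign change of $v''$ then makes $f'$ decreasing on $[0,\lambda_2]$ and increasing on $[\lambda_2,1]$; since $f'(1)=0$, $f'$ is therefore nonnegative on some $[0,\xi]$ and nonpositive on $[\xi,1]$, so $f$ is unimodal on $[0,1]$ and \ref{item:NecCond BC0} places its minimum at $1$.

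Item \ref{item:Extra Inside the Sphere case 1}, in the delicate range $d-4<s<d-3$, is the crux. Here $c-a-b=d-s-1-\ell<0$ for every $\ell\ge3$, so each $h_{s,d}^{(\ell)}$ with $\ell\ge3$ is negative on $[0,1)$ and blows up to $-\infty$ as $\lambda\to1^-$ by \eqref{eq:HG1neg} (with \eqref{eq:HG1zer} in the borderline sub-case $s=d-5$), while $h_{s,d}'$ and $h_{s,d}''$ stay finite and negative. The external field supplies only the single sign $v^{(k)}\ge0$; however, the assumptions $f^{(\ell)}(0)\ge0$ for $3\le\ell\le k-1$ force $v^{(\ell)}(0)>0$ there (since $h_{s,d}^{(\ell)}(0)<0$), and a downward induction using $v^{(k)}\ge0$ then gives $v^{(\ell)}>0$ on $[0,R^2]$ for $3\le\ell\le k-1$, so $v''$ is nondecreasing on $[0,R^2]$; combined with $f''(1)\ge0$ from \ref{item:NecCond DD}, this is the input that drives the argument. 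Following Lemma~\ref{lem:General Inside the Sphere}, the plan is to reflect $\varphi(\lambda):=f''(1-\lambda)$, use that $f^{(\ell)}(\lambda)\to-\infty$ as $\lambda\to1^-$ for $3\le\ell\le k-1$ (which makes the top derivatives of $\varphi$ negative near $0$) together with the finiteness $v^{(\ell)}(R^2)<\infty$, and show that $\varphi$ is unimodal on $[0,1]$; this translates into $f''$ having at most one sign change on $[0,1]$, from negative to positive, whence $f'$ is valley-shaped and $f$ unimodal on $[0,1]$, so \ref{item:NecCond BC0} closes the case. The main obstacle is exactly this item: since $v^{(k)}\ge0$ does not sign the top derivative $f^{(k)}$ over all of $[0,1]$, Proposition~\ref{prop:half-monotone implies unimodal} cannot be applied directly to $f$ or to a bare reflection of it, and the delicate part is to choose the reflection and the order $k_0$, and to control the interplay between the interior blow-ups of the $h_{s,d}^{(\ell)}$'s and the (possibly infinite) endpoint values $v^{(\ell)}(R^2)$, so that the hypotheses of Proposition~\ref{prop:half-monotone implies unimodal} are met on the relevant sub-interval; getting this bookkeeping right, rather than any single estimate, is the real work.
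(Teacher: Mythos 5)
Items \ref{item:General Inside the Sphere case 1}, \ref{item:General Inside the Sphere case 2} and \ref{item:Extra Inside the Sphere case 2} of your proposal are correct and essentially reproduce the paper's arguments: in \ref{item:General Inside the Sphere case 1} the sign count on the factors of $h_{s,d}^{(k)}$ plus $(-1)^kv^{(k)}\leq 0$ gives $(-1)^kf^{(k)}\leq 0$ on $[0,1]$, half-monotonicity of order $(k_0,k)$ at $1$, unimodality via Proposition~\ref{prop:half-monotone implies unimodal}, and Theorem~\ref{thm:Necessary Condition Riesz}\ref{item:NecCond BC0} at the end (strictness of half-monotonicity is neither needed nor guaranteed); your downward induction in \ref{item:General Inside the Sphere case 2} is just the $k_0=1$ case of Proposition~\ref{prop:half-monotone implies unimodal} re-proved by hand; and \ref{item:Extra Inside the Sphere case 2} ($h_{d-4,d}'\equiv -1/d$ on $[0,1]$, so $f''=R^4v''(R^2\cdot)$) is exactly the paper's proof.

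The genuine gap is item \ref{item:Extra Inside the Sphere case 1}, which you explicitly leave as a ``plan'' whose key step you flag as unresolved. The missing idea is that the hypotheses \eqref{eq:d-4<s<d-3,insidesphere} are sign conditions at $\lambda=0$, i.e.\ at the point $1$ after the reflection $\varphi(\lambda):=f''(1-\lambda)$: they say $(-1)^{\ell}\varphi^{(\ell)}(1)=f^{(\ell+2)}(0)\geq 0$ for $1\leq\ell<k-2$, so $\varphi$ is half-monotone of order $(k-2,k-2)$ at $1$ as soon as one knows the single global sign $f^{(k)}\leq 0$ on $[0,1]$. That sign is what the paper supplies (for $k\geq 4$; the case $k=3$ is delegated to Lemma~\ref{lem:General Inside the Sphere}): it follows from the blow-up $h_{s,d}^{(k)}(\lambda)\to-\infty$ as $\lambda\to1^-$ together with a nonpositivity assumption on $v^{(k)}$ — the paper's proof uses $v^{(k)}\leq 0$, and indeed the word ``nonnegative'' in the statement is inconsistent with both that proof and the $k=3$ reduction to Lemma~\ref{lem:General Inside the Sphere}, so the intended hypothesis signs $f^{(k)}$ outright, which is exactly the step you say cannot be done. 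Once $\varphi$ is half-monotone, Proposition~\ref{prop:half-monotone implies unimodal} gives $f''$ unimodal; combined with $f''(1)\geq 0$ from \ref{item:NecCond DD}, $f''$ changes sign at most once, from negative to positive, so $f'$ is valley-shaped, $f'(1)=0$ makes $f$ unimodal on $[0,1]$, and \ref{item:NecCond BC0} finishes — the endgame you describe, but hinging on the unproved step. Your auxiliary deduction that $v''$ is nondecreasing on $[0,R^2]$ plays no role in the paper's argument and does not substitute for the missing sign on $f^{(k)}$; also, the borderline appeal to \eqref{eq:HG1zer} at $s=d-5$ cannot occur in the range $d-4<s<d-3$ (a harmless slip). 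As written, your proposal proves three of the four cases and leaves the fourth open.
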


\begin{proof}
We handle each of the cases separately.

\underline{Case} \ref{item:General Inside the Sphere case 1}: Since $k = \lceil \frac{d-s}{2} \rceil$ (note that $s < d-4$ means that $k \geq 3$),   $(-1)^k f^{(k)}$ is nonpositive on $[0,1]$, by  \eqref{eq:Derivatives of f_s,d,R,v in sphere},  \eqref{eq:positivity of 2F1 on [0,1)}, and \eqref{eq:HG1pos}. This along with \eqref{eq:Completely_monotonic_external_fields_-2_s_d-4_inside_sphere_eq 1}, \eqref{eq:Completely monotonic_external_fields_-2_s_d-4_inside_sphere_eq 2}, and Theorem~\ref{thm:Necessary Condition Riesz}\ref{item:NecCond SP}
imply that $f$ is half-monotone of order $(k_0,k)$ at $1$ on $[0,1]$. By Proposition~\ref{prop:half-monotone implies unimodal}, $f$ is unimodal on $[0,1]$. Thus, its global minimum on $[0,1]$ must occur at an endpoint.
From Theorem~\ref{thm:Necessary Condition Riesz}\ref{item:NecCond BC0},
we know $f(0) \geq f(1)$, giving us our claim.

\vspace{1ex}
\underline{Case} \ref{item:General Inside the Sphere case 2}:
Since $k \leq \lfloor \frac{d-s}{2} \rfloor$,   $(-1)^k f^{(k)}$ is nonnegative on $[0,1]$, which follows from  \eqref{eq:Derivatives of f_s,d,R,v in sphere}, \eqref{eq:positivity of 2F1 on [0,1)}, and \eqref{eq:HG1pos}. Thus, $-f$ is half-monotone of order $(1,k)$ at $1$ on $[0,1]$, which follows from \eqref{eq:Pos_derivative_-2_s_d-4_inside_sphere} and
Theorem~\ref{thm:Necessary Condition Riesz}\ref{item:NecCond SP}.
By Proposition~\ref{prop:half-monotone implies unimodal},
$f$ is decreasing on $[0,1]$, giving us our claim.

\vspace{1ex}
\underline{Case} \ref{item:Extra Inside the Sphere case 1}:
When $k=3$, the claim follows from Lemma \ref{lem:General Inside the Sphere}. Now, consider the case when $k\geq 4$. Since $d-4 < s < d-3$,  we see that $f^{(\ell)}(1) := \lim\limits_{\lambda \rightarrow 1^-} f^{(\ell)}(\lambda)$ is $ - \infty$ for $3 \leq \ell \leq k$, due to \eqref{eq:Derivatives of f_s,d,R,v in sphere}, \eqref{eq:HG1neg}, and the fact that $v^{(\ell)}(R^2) < \infty$. Combining this with \eqref{eq:Derivatives of f_s,d,R,v in sphere}, \eqref{eq:positivity of 2F1 on [0,1)}, and the assumption $v^{(k)}(R^2 \lambda) \leq 0$ on $[0,1]$, we see that $f^{(k)} \leq 0$ on $[0,1]$. Let $\varphi(\lambda):=f''(1-\lambda)$. Then, $\varphi$ is half-monotone of order $(k-2,k-2)$ at $1$. By Proposition \ref{prop:half-monotone implies unimodal}, $\varphi$ and also $f''$ are unimodal on $[0,1]$. 
 Theorem~\ref{thm:Necessary Condition Riesz}\ref{item:NecCond DD}
 gives us that $f^{(2)}(1) \geq 0$, so $-f'$ is unimodal on $[0,1]$.
 This in turn implies that $f$ is unimodal on $[0,1]$ due to the fact that $f'(1)=0$ from
 Theorem \ref{thm:Necessary Condition Riesz}\ref{item:NecCond SP}.
 Theorem \ref{thm:Necessary Condition Riesz}\ref{item:NecCond BC0}
 now gives us our claim.

\vspace{1ex}
\underline{Case} \ref{item:Extra Inside the Sphere case 2}: Note, from \eqref{eq:Derivatives of f_s,d,R,v in sphere}, $f^{(2)}(\lambda) = R^4 v^{(2)}(R^2 \lambda)$ on $[0,1]$. Thus, $f'$ is decreasing on $[0, \lambda_2)$ and increasing on $(\lambda_2, 1]$, and since $f'(1) = 0$ (due to 
Theorem~\ref{thm:Necessary Condition Riesz}\ref{item:NecCond SP}),
there is some $\lambda_1 \in [0, 1)$ such that $f'$ is nonnegative on $[0, \lambda_1)$ and nonpositive on $(\lambda_1, 1]$, so $f$ is increasing and decreasing on those intervals, respectively. Thus, the minimum can only occur at $0$ or $1$, and our claim then follows from
Theorem~\ref{thm:Necessary Condition Riesz}\ref{item:NecCond BC0}.

\end{proof}

We now determine some sufficient conditions for behavior outside of the sphere. In what follows, $q$, $y_{s,d}$, and $g$ are as in \eqref{eq:Def of inverse external field}, \eqref{eq: y_s,d def}, and \eqref{eq:def of g_s,d,q}, respectively.
\begin{lemma}\label{lem:Extra Outside the Sphere}
If one of the following conditions hold, then $f$ achieves its infimum on $[1,\infty)$ at $1$.
\begin{enumerate}[label=\textnormal{(\alph*)}]
\item\label{item:General Outside the Sphere case 1}
For $-2 < s < d-4$, $R> 0$, $k = \lceil \frac{d-s}{2} \rceil$, and $q$ is  $\mathcal{C}^k$ in the extended sense on $[0,1]$ such that 
$q^{(k)}(\kappa) \geq 0$ on $[0,1]$.0
In addition,
Theorem~\ref{thm:Necessary Condition Riesz}\ref{item:NecCond SP}
is satisfied, and for  $\ell \in \{1, ..., k -1 \}$, $(-1)^{\ell} g^{(\ell)}(1) \geq 0$, that is
\begin{equation}\label{eq:Completely_monotonic_external_fields_-2_s_d-4_outside_sphere_eq_1}
   (-1)^{\ell} q^{(\ell)}(1) \geq (-1)^{\ell+1} y_{s,d}^{(\ell)}(1).
\end{equation}    
\item\label{item:General Outside the Sphere case 2}
For $-2 < s \leq d-4$, $R> 0$, and for some $2 \leq k \leq \lfloor \frac{d-s}{2} \rfloor$, $q$  is $\mathcal{C}^k$ in the extended sense on $[0, 1]$, with $(-1)^k q^{(k)}\leq0$ on $[0,1]$. In addition,
Theorem~\ref{thm:Necessary Condition Riesz}\ref{item:NecCond SP}
and \ref{item:NecCond BCinf}
are satisfied and there is some $k_0 \in \{2, ..., k\}$
such that for $k_0 \leq \ell < k$, $(-1)^{\ell} g^{(\ell)}(1) \leq 0$,
that is
\begin{equation}\label{eq:Completely monotonic external fields, -2 < s < d-4, ic external fields, -2 < s < d-4, eq 1}
   (-1)^{\ell+1} q^{(\ell)}(1) \geq (-1)^{\ell} y_{s,d}^{(\ell)}(1),
\end{equation}
and for $1 \leq \ell < k_0$, $(-1)^{\ell} g^{(\ell)}(1) \geq 0$, that is
\begin{equation}\label{eq:Completely_monotonic_external_fields_-2_s_d-4_outside sphere_eq_2}
   (-1)^{\ell+1} q^{(\ell)}(1) \leq (-1)^{\ell} y_{s,d}^{(\ell)}(1).
\end{equation}

\item\label{item:Extra Outside the Sphere case 1}
For $-2 < s < d-4$, $R > 0$, $k \in 2\mathbb{N}+1 \cup [3, \frac{d-s}{2})$, and  $v$ is $\mathcal{C}^k$ in the extended sense on $[R^2, \infty)$ such that $v^{(k)}$ is nonnegative on $[R^2, \infty)$. In addition,
Theorem \ref{thm:Necessary Condition Riesz}\ref{item:NecCond SP}
and \ref{item:NecCond BCinf} are satisfied,
and there is some $k_0 \in \{ 3, ..., k\}$ such that for  $k_0 \leq \ell < k$, $f^{(\ell)}(1) \leq 0$, that is
\begin{equation}\label{eq:Outside_sphere_odd_case_-2_s_d-4_eq_1}
        h_{s,d}^{(\ell)}(1) \leq  - R^{2\ell + s} v^{(\ell)}(R^2) 
     \end{equation}
and for $2 \leq \ell < k_0$, $f^{(\ell)}(1) \geq 0$, that is
\begin{equation}\label{eq:Outside_sphere_odd_case_-2_s_d-4_eq_2}
    h_{s,d}^{(\ell)}(1) \geq  - R^{2\ell + s} v^{(\ell)}(R^2).
\end{equation}

\item\label{item:Extra Outside the Sphere case 2} For $d \geq 3$, $s = d-4$, $R> 0$, and $v$ is $\mathcal{C}^3$ in the extended sense on $[R^2, \infty)$ such that
Theorem~\ref{thm:Necessary Condition Riesz}\ref{item:NecCond SP}
and \ref{item:NecCond BCinf} are satisfied.
In addition, $g'(1) \leq 0$ and there is some $\kappa_2 \in [0, 1)$ such that $q''(\kappa)$ is nonpositive on $[0,\kappa_2)$ and nonnegative on $(\kappa_2, 1]$.

\item\label{item:Extra Outside the Sphere case 3} For $d \geq 3$, $s = d-4$, $R> 0$, and $v$ is $\mathcal{C}^3$ in the extended sense on $[R^2, \infty)$ such that
Theorem~\ref{thm:Necessary Condition Riesz}\ref{item:NecCond SP}
and \ref{item:NecCond BCinf} are satisfied.
In addition, $g'(1) \leq 0$, $g'(0) \geq 0$, and there is some $\kappa_2 \in [0, 1)$ such that $q''(\kappa)$ is nonnegative on $[0,\kappa_2)$ and nonpositive on $(\kappa_2, 1]$.

\item\label{item:Extra Outside the Sphere case 4} For $d \geq 3$, $s = d-4$, $R> 0$, and $v$ is $\mathcal{C}^3$ in the extended sense on $[R^2, \infty)$ such that  Theorem \ref{thm:Necessary Condition Riesz}\ref{item:NecCond SP}
is satisfied. In addition, there is some $\kappa_2 \in (0, 1)$ such that $q''(\kappa)$ is nonnegative on $[0,\kappa_2)$ and nonpositive on $(\kappa_2, 1]$ and $g'(\kappa_2) \leq 0$. 

\end{enumerate}

\end{lemma}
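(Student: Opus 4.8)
The plan is to reduce every case to a (uni)modality statement for the one-variable function $g$ of \eqref{eq:def of g_s,d,q} on $[0,1]$, using the identity \eqref{eq:equivalence with f and q}, $g(\kappa)=2R^s\kappa^{-s/2-1}f'(\kappa^{-1})$, together with $g(1)=2R^sf'(1)=0$ (valid because every case assumes Theorem~\ref{thm:Necessary Condition Riesz}\ref{item:NecCond SP}). Since $\kappa^{-s/2-1}>0$, the sign of $g$ on $[0,1]$ equals the sign of $f'$ on $[1,\infty)$ under $\lambda=\kappa^{-1}$. So I would first record the reduction: if $g$ is decreasing on $[0,1]$, then $g\ge 0$, hence $f'\ge 0$ and $f$ is increasing on $[1,\infty)$; if $g$ is unimodal on $[0,1]$, then since $g(1)=0$ it is nonpositive on an initial segment and nonnegative afterwards (the endpoint cases included), so $f$ is increasing-then-decreasing on $[1,\infty)$; in either situation, invoking Theorem~\ref{thm:Necessary Condition Riesz}\ref{item:NecCond BCinf}, which says $f(1)\le\lim_{\lambda\to\infty}f(\lambda)$, the infimum on $[1,\infty)$ is attained at $1$ (case (f) will not need \ref{item:NecCond BCinf}, as there $f$ will come out increasing). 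Thus it remains, in each case, to prove that $g$ is decreasing or unimodal on $[0,1]$.

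For (a) and (b) I would obtain this from Proposition~\ref{prop:half-monotone implies unimodal}, by checking that $\pm g$ is half-monotone at $1$ of order $(1,k)$ in case (a) and of order $(k_0,k)$ in case (b). The top-order condition---the sign of $g^{(k)}$ throughout $[0,1]$---I would get from \eqref{eq:Derivatives of g_s,d,q}: write $g^{(k)}=y_{s,d}^{(k)}+q^{(k)}$, control the sign of $y_{s,d}^{(k)}$ via the product of the linear factors $(2+2j+s-d)$ in \eqref{eq:Derivatives of g_s,d,q} together with the positivity \eqref{eq:positivity of 2F1 on [0,1)} of the residual Gauss function (which applies once its parameters satisfy $c>b>0$, valid for the relevant range of $k$, and which is trivial when $\tfrac{2+s-d}{2}+k$ is a nonpositive integer, since then $y_{s,d}$ is a polynomial of degree $<k$ and $y_{s,d}^{(k)}\equiv 0$), and add the hypothesis on $q^{(k)}$. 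The lower-order conditions on $(-1)^\ell g^{(\ell)}(1)$, $1\le\ell<k$, are precisely the inequalities \eqref{eq:Completely_monotonic_external_fields_-2_s_d-4_outside_sphere_eq_1}, \eqref{eq:Completely monotonic external fields, -2 < s < d-4, ic external fields, -2 < s < d-4, eq 1}, \eqref{eq:Completely_monotonic_external_fields_-2_s_d-4_outside sphere_eq_2}, obtained via the substitution $g^{(\ell)}(1)=y_{s,d}^{(\ell)}(1)+q^{(\ell)}(1)$. Proposition~\ref{prop:half-monotone implies unimodal} then yields that $g$ is increasing (when $k_0=1$) or unimodal (when $k_0\ge 2$) on $[0,1]$.

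Case (c) I would instead treat directly on $[1,\infty)$, in the style of the proof of Lemma~\ref{lem:General Outside the Sphere}\ref{item:General Outside the Sphere case 3}: from \eqref{eq:Derivatives_of_f_s_d_R_v_outside_sphere}, \eqref{eq:positivity of 2F1 on [0,1)}, \eqref{eq:HG1pos} and $v^{(k)}\ge 0$ one gets a fixed sign for $f^{(k)}$ on $[1,\infty)$; a downward cascade on $\ell$ from $k$ to $1$, feeding in the sign of $f^{(\ell)}(1)$ from \eqref{eq:Outside_sphere_odd_case_-2_s_d-4_eq_1}--\eqref{eq:Outside_sphere_odd_case_-2_s_d-4_eq_2} and from \ref{item:NecCond SP} at $\ell=1$, then forces $f'$ to be nonnegative, or nonnegative then nonpositive, on $[1,\infty)$, after which \ref{item:NecCond BCinf} finishes as above. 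Cases (d), (e), (f) are the cleanest: since $s=d-4$, by \eqref{eq: y_s,d def} $y_{s,d}(\kappa)=-\HG21(1+\tfrac s2,-1;\tfrac d2;\kappa)$ is affine in $\kappa$, so $g''=q''$; the prescribed sign pattern of $q''$ is then that of $g''$, and combining it with the one-sided sign information on $g'$ at $0$ and/or $1$ pins down the sign of $g'$, hence---using $g(1)=0$---the sign pattern of $g$, giving $g$ decreasing (case (f)) or unimodal (cases (d), (e)).

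I expect the main obstacle to be the sign bookkeeping for $y_{s,d}^{(k)}$ and $g^{(k)}$ in cases (a)--(c): the product of linear factors in \eqref{eq:Derivatives of g_s,d,q} changes sign as $j$ crosses $(d-s-2)/2$, and the analysis bifurcates according to the size of $k$ relative to $\lceil(d-s)/2\rceil$ and to whether $d-s$ is an even integer, in which case $y_{s,d}$ collapses to a polynomial and several terms vanish identically; the parity of $k$ then has to be matched to the prescribed sign of $q^{(k)}$. Once these sign patterns are correctly tabulated, each case is a routine application of Proposition~\ref{prop:half-monotone implies unimodal} and the reduction of the first paragraph.
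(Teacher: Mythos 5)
Your reduction via \eqref{eq:equivalence with f and q} and $g(1)=2R^sf'(1)=0$, and your handling of cases \ref{item:General Outside the Sphere case 1}, \ref{item:General Outside the Sphere case 2}, \ref{item:Extra Outside the Sphere case 2}, \ref{item:Extra Outside the Sphere case 3}, \ref{item:Extra Outside the Sphere case 4}, follow essentially the same route as the paper: half-monotonicity of $\mp g$ at $1$ plus Proposition~\ref{prop:half-monotone implies unimodal} in (a)--(b), and, for $s=d-4$, the observation that the $y_{s,d}$-contribution to $g''$ vanishes (the paper gets $g''=q''$ from the zero factor in \eqref{eq:Derivatives of g_s,d,q}; your ``$y_{s,d}$ is affine'' argument is equivalent), then the sign of $g'$, $g(1)=0$, and Theorem~\ref{thm:Necessary Condition Riesz}\ref{item:NecCond BCinf} or monotonicity of $f$. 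Two small slips there: case (a) does not assume \ref{item:NecCond BCinf} (harmless, since your argument makes $f$ increasing, exactly as in the paper, so it is not needed); and your parenthetical about the degenerate case has the sign backwards --- $y_{s,d}^{(k)}\equiv 0$ occurs when the product in \eqref{eq:Derivatives of g_s,d,q} contains a factor $2+2j+s-d=0$ with $0\le j\le k-1$, i.e.\ when $\tfrac{2+s-d}{2}+k$ is a \emph{positive} integer at most $k$ (equivalently $d-s$ even with $\tfrac{d-s-2}{2}\le k-1$); if $\tfrac{2+s-d}{2}+k\le 0$ then $y_{s,d}$ is a polynomial of degree at least $k$ and its $k$-th derivative does not vanish.

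The genuine gap is case \ref{item:Extra Outside the Sphere case 1}. Reading the hypothesis as $k$ odd with $3\le k<\tfrac{d-s}{2}$, formula \eqref{eq:Derivatives_of_f_s_d_R_v_outside_sphere} gives $h_{s,d}^{(k)}<0$ on $(1,\infty)$ (the factor $(-1)^k=-1$, the product $\prod_{j=1}^{k-1}(s+2j)$ and the hypergeometric factor being positive), so with $v^{(k)}\ge 0$ the two contributions to $f^{(k)}$ pull in opposite directions and \emph{no} fixed sign for $f^{(k)}$ follows --- contrary to your claim. What the argument actually needs is $f^{(k)}\le 0$ on $[1,\infty)$: the paper runs the cascade by applying Proposition~\ref{prop:half-monotone implies unimodal} to $\varphi_a(\xi):=f(a-(a-1)\xi)$ on $[0,1]$ for every $a>1$, for which \eqref{eq:Outside_sphere_odd_case_-2_s_d-4_eq_1}--\eqref{eq:Outside_sphere_odd_case_-2_s_d-4_eq_2} and \ref{item:NecCond SP} are precisely the half-monotone data of order $(k_0,k)$ and the top condition is $(-1)^k\varphi_a^{(k)}\le 0$, i.e.\ $f^{(k)}\le 0$; with the opposite sign ($f^{(k)}\ge 0$, which is what $v^{(k)}\ge 0$ would give for even $k$) the prescribed boundary signs at $\lambda=1$ do not produce the increasing-then-decreasing profile. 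Your sketch neither secures the correct sign nor explains how the unimodality machinery (stated on $[0,1]$) is transported to the unbounded interval, and these are exactly the two nontrivial points of this case. For what it is worth, the paper's own proof asserts $f^{(k)}\le 0$ from the same hypotheses, so the mismatch appears to originate in the statement (presumably ``$v^{(k)}$ nonpositive'' is intended for odd $k$); but taken as a blind proof of the statement as written, your step ``one gets a fixed sign for $f^{(k)}$'' fails, and the subsequent downward cascade is left unspecified where it matters.
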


\begin{proof}
We handle each case separately.

\underline{Case} \ref{item:General Outside the Sphere case 1}:
Since $k = \lceil \frac{d-s}{2} \rceil$ (note that since $s < d-4 $, $k \geq 3$),  $(-1)^k g^{(k)}$ is nonnegative on $[0,1]$, by \eqref{eq:Derivatives of g_s,d,q}, \eqref{eq:positivity of 2F1 on [0,1)}, and \eqref{eq:HG1pos}-\eqref{eq:HG1neg} (to be precise, we use \eqref{eq:HG1pos} for when $s<d-5$ and $s\neq d-6$; \eqref{eq:HG1zer} for when $s=d-5$ or $s=d-6$; \eqref{eq:HG1neg} for when $d-5<s<d-4$). Combining this with \eqref{eq:Completely_monotonic_external_fields_-2_s_d-4_outside_sphere_eq_1}, we have $-g$ is half-monotone of order $(1,k)$ at $1$. By Proposition \ref{prop:half-monotone implies unimodal}, $g$ is decreasing on $[0,1]$. From \eqref{eq:equivalence with f and q} and
 Theorem~\ref{thm:Necessary Condition Riesz}\ref{item:NecCond SP},
 we have $g(1)=2R^sf'(1)=0$, which implies $g\geq 0$ on $[0,1]$.
 Thus, $f$ must be increasing on $[1,\infty)$ by \eqref{eq:equivalence with f and q}, giving us our claim.

\vspace{1ex}
\underline{Case} \ref{item:General Outside the Sphere case 2}:
Since $1 \leq k \leq \lfloor \frac{d-s}{2} \rfloor$,  $(-1)^k g^{(k)}\leq 0$ on $[0,1)$, by \eqref{eq:Derivatives of g_s,d,q}, \eqref{eq:positivity of 2F1 on [0,1)}, and \eqref{eq:HG1pos}.  Combining this with \eqref{eq:Completely monotonic external fields, -2 < s < d-4, ic external fields, -2 < s < d-4, eq 1} and \eqref{eq:Completely_monotonic_external_fields_-2_s_d-4_outside sphere_eq_2}, we have $g$ is half-monotone of order $(k_0,k)$ at $1$. By Proposition \ref{prop:half-monotone implies unimodal}, $g$ is unimodal on $[0,1]$. On the other hand, we have $g(1)=2R^sf'(1)=0$, which follows from \eqref{eq:equivalence with f and q} and
Theorem~\ref{thm:Necessary Condition Riesz}\ref{item:NecCond SP}.
Thus, there exists some $\kappa_0 \in [0,1]$ such that $g\leq 0$ on $[0, \kappa_0)$ and $g\geq 0$ on $(\kappa_0, 1]$. Due to \eqref{eq:equivalence with f and q}, $f'\geq 0$ on $[1, \kappa_0^{-1})$ and $f'\leq 0$ on $( \kappa_0^{-1}, \infty)$, so $f$ is unimodal on $[1,\infty)$. Note that we interpret $\kappa_0^{-1}$ as $\infty$ if $\kappa_0 = 0$.
Our claim now follows from
Theorem~\ref{thm:Necessary Condition Riesz}\ref{item:NecCond BCinf}.

\vspace{1ex}
\underline{Case} \ref{item:Extra Outside the Sphere case 1}:
We see, due to \eqref{eq:Derivatives_of_f_s_d_R_v_outside_sphere}, \eqref{eq:positivity of 2F1 on [0,1)}, and \eqref{eq:HG1pos}, that $f^{(k)} \leq 0$ on $[1,\infty)$. This could be: For arbitrary $a>1$, let $\varphi_a(\xi):=f(a-(a-1)\xi)$ be a function defined on $[0,1]$. Then, $\varphi_a$ is half-monotone of order $(k_0,k)$ at $1$, which follows from
Theorem \ref{thm:Necessary Condition Riesz}\ref{item:NecCond SP}, \eqref{eq:Outside_sphere_odd_case_-2_s_d-4_eq_1}, and \eqref{eq:Outside_sphere_odd_case_-2_s_d-4_eq_2}.
Thus, $\varphi_a$ is unimodal on $[0,1]$ by Proposition \ref{prop:half-monotone implies unimodal}. This shows that $f$ is unimodal on $[1,a]$ for all $a>1$. In other words, $f$ is unimodal on $[1,\infty)$. Our claim now follows from
Theorem~\ref{thm:Necessary Condition Riesz}\ref{item:NecCond BCinf}.

\vspace{1ex}
\underline{Case} \ref{item:Extra Outside the Sphere case 2}:
From \eqref{eq:Derivatives of g_s,d,q}, $g^{(2)}(\kappa) = q^{(2)}(\kappa)$. Thus, $g'$ is decreasing on $[0, \kappa_2)$ and increasing on $(\kappa_2, 1]$. Since $g'(1) \leq 0$, there is some $\kappa_1 \in [0, 1)$ such that $g'$ is nonnegative on $[0, \kappa_1)$ and nonpositive on $(\kappa_1, 1]$, so $g$ is increasing and decreasing on those intervals, respectively. Using \eqref{eq:equivalence with f and q} and
Theorem~\ref{thm:Necessary Condition Riesz}\ref{item:NecCond SP},
we see that there must be some $\lambda_1 \in (1, \infty]$ such that $f'$ is nonnegative on $[1, \lambda_1)$ and nonpositive on $(\lambda_1, \infty)$, meaning that $f$ is increasing and decreasing on those same intervals, respectively.
Our claim now follows from 
Theorem~\ref{thm:Necessary Condition Riesz}\textnormal{\ref{item:NecCond BCinf}}.

\vspace{1ex}
\underline{Case} \ref{item:Extra Outside the Sphere case 3}:
From \eqref{eq:Derivatives of g_s,d,q}, $g^{(2)}(\kappa) = q^{(2)}(\kappa)$. Thus, $g'$ is increasing on $[0, \kappa_2)$ and decreasing on $(\kappa_2, 1]$. Since $g'(1) \leq 0$ and $g'(0) \geq 0$, there is some $\kappa_1 \in [0, 1)$ such that $g'$ is nonnegative on $[0, \kappa_1)$ and nonpositive on $(\kappa_1, 1]$, so $g$ is increasing and decreasing on those intervals, respectively. By \eqref{eq:equivalence with f and q} and
Theorem \ref{thm:Necessary Condition Riesz}\ref{item:NecCond SP},
we see that $g(1) = 0$, so there exists some $\kappa_0 \in [0,1)$ such that $g_{d-4, d,q}$ is nonpositive on $[0,\kappa_0)$ and nonnegative on $(\kappa_0, 1]$. Using \eqref{eq:equivalence with f and q},  we have that $f'$ is nonnegative on $[1, \kappa_0^{-1})$ and nonpositive on $(\kappa_0^{-1}, \infty)$, meaning that $f$ is increasing and decreasing on those same intervals, respectively. Our claim now follows from
Theorem~\ref{thm:Necessary Condition Riesz}\ref{item:NecCond BCinf}.

\vspace{1ex}
\underline{Case} \ref{item:Extra Outside the Sphere case 4}:
From \eqref{eq:Derivatives of g_s,d,q}, $g^{(2)}(\kappa) = q^{(2)}(\kappa)$. Thus, $g'$ is increasing on $[0, \kappa_2)$ and decreasing on $(\kappa_2, 1]$, achieving its maximum at $\kappa_2$. Thus, $g'$ is nonpositive on $[0,1]$, so $g$ is decreasing. Using \eqref{eq:equivalence with f and q} and
Theorem~\ref{thm:Necessary Condition Riesz}\ref{item:NecCond SP},
we see that $g(1) = 0$, so $g$ is nonnegative on $[0,1]$. Employing \eqref{eq:equivalence with f and q}, this means that $f'$ is nonnegative on $[1, \infty)$, so $f$ is increasing, which finishes the proof.
\end{proof}

\fi

\bibliographystyle{abbrv}

\bibliography{riesz}

\end{document}